    \crefname{conj}{conjecture}{conjectures}
    \crefname{conj}{Conjecture}{Conjectures}
\numberwithin{equation}{section}
\newtheorem{thm}{Theorem}[section]
\newtheorem{introthm}{Theorem}
\newtheorem*{introthm*}{Main Theorem}
\newtheorem{cor}[thm]{Corollary}
\newtheorem{lem}[thm]{Lemma}
\newtheorem{prop}[thm]{Proposition}
\theoremstyle{definition}
\newtheorem{defn}[thm]{Definition}
\newtheorem*{defn*}{Definition}
\newtheorem{ex}[thm]{Example}
\newtheorem{constr}[thm]{Construction}
\newtheorem{rem}[thm]{Remark}
\newtheorem{notation}[thm]{Notation}
\theoremstyle{remark}
\newtheorem*{claim1}{Claim 1}
\newtheorem*{claim2}{Claim 2}
\newcommand{\kk}{{\sf{k}}}
\newcommand{\N}{{\mathbb N}}
\renewcommand{\P}{{\mathbb P}}
\newcommand{\Q}{{\mathbb Q}}
\newcommand{\Z}{{\mathbb Z}}
\def\Po{\operatorname{P}}
\newcommand{\I}{{\mathcal I}}
\newcommand{\m}{{\mathfrak m}}
\newcommand{\sym}{\mathfrak{S}}
\renewcommand{\l}{\lambda}
\def\ch{\operatorname{char}}
\def\pd{\operatorname{pd}}
\def\rank{\operatorname{rank}}
\def\reg{\operatorname{reg}}
\def\Ann{{\operatorname{Ann}}}
\def\Tor{{\operatorname{Tor}}}
\def\HF{{\operatorname{\sf{HF}}}}
\def\Hom{\operatorname{Hom}}
\def\Soc{\operatorname{Soc}}
\def\bsa{\operatorname{\boldsymbol{\alpha}}}
\def\lex{{\operatorname{Lex}}}
\def\Span{{\operatorname{Span}}}
\def\l{\lambda}
\def\type{\operatorname{type}}
\DeclareMathOperator{\Gr}{\operatorname{Gr}}
\tikzset{
  symbol/.style={
    draw=none,
    every to/.append style={
      edge node={node [sloped, allow upside down, auto=false]{$#1$}}}
  }
}
\title{General Symmetric Ideals}
\author[A.~Seceleanu]{Alexandra Seceleanu}
\address{
Department of Mathematics, University of Nebraska-Lincoln, Lincoln,  NE 68588, U.S.A.}
\email{aseceleanu@unl.edu}
\author[L.~\c{S}ega]{Liana \c{S}ega}
\address{
Division of Computing, Analytics and Mathematics, University of Missouri-Kansas City, Kansas City, MO 64110, U.S.A.}
\email{segal@umkc.edu}
\keywords{General symmetric ideal, Betti numbers, asymptotic stability, Weak Lefschetz Property.}
\subjclass[2010]{Primary: 13D02. Secondary 13A50, 13F20.}
\date{\today}
\begin{document}
\begin{abstract}
We investigate the structure and properties of symmetric ideals generated by general forms in the polynomial ring 
 under the natural action of the symmetric group. This work significantly broadens the framework established in our earlier collaboration with Harada on principal symmetric ideals \cite{HSS24}. A novel aspect of our approach is the construction of a bijective parametrization of general
symmetric ideals using Macaulay-Matlis  duality, which is asymptotically independent of the number of variables of the ambient ring. We establish that general symmetric ideals exhibit extremal behavior in terms of Hilbert functions and Betti numbers, and satisfy the Weak Lefschetz Property. We also demonstrate explicit asymptotic stability in their algebraic and homological invariants under increasing numbers of variables, showing that such ideals form well-behaved $\sym_\infty$--invariant chains. 
\end{abstract}

\maketitle

\tableofcontents

\section{Introduction}
\label{s:prelim}

Let $\kk$ be a field and let $R=\kk[x_1,\ldots, x_n]$ denote the polynomial ring in $n$ variables. 
Let $I$ be a homogeneous ideal in $R$, i.e., an ideal generated by homogeneous polynomials $f_1, \ldots , f_r$ of degrees $d_1,\ldots, d_r$, respectively. A theme of investigation with a few decades of tradition in commutative algebra concerns the properties of the cyclic module $R/I$ as the polynomials $f_1, \ldots , f_r$ are chosen to be as independent as possible. For example, if $r\leq n$ this means that $f_1, \ldots , f_r$ form a regular sequence and thus $R/I$ is a complete intersection, while for $r\geq n$ the ring $R/I$ is a finite dimensional $\kk$-vector space, thus an artinian $\kk$-algebra.

It was shown by Fr\"oberg and L\"ofwall in \cite{FrobergLofwall} that, for fixed values of  $n, d_1, \ldots , d_r$, there exists only a finite number of possible Hilbert series for $R/I$. Moreover,  there is a Zariski open subset in the space of coefficients of the $f_i$'s on which the Hilbert series of $R/I$ is the same and it is in fact the minimal series among all possible Hilbert series of quotients with the same parameters. Famously, a longstanding conjecture by Fr\"oberg describes this minimal Hilbert series \cite{Froberg} and is related to Lefschetz properties of artinian rings. One calls algebras with this Hilbert series {\em general}. 
In complete generality Fr\"oberg's conjecture has proven elusive, however the following works make partial progress: \cite{Froberg} ($n=2$), \cite{Anick} ($n$=3), \cite{Stanley} ($r=n+1$), \cite{Nenashev} (infinitely many cases with $d_1=d_2=\cdots=d_r$).

There is an additional conjecture due to Migliore and Mir\'o-Roig \cite[Conjecture 5.8]{MM} regarding the betti numbers of general algebras $R/I$ with $I$ equigenerated ($d_1=d_2=\cdots=d_r$). It asserts that such algebras would possess minimal Betti numbers consistent with the Fr\"oberg Hilbert function. A  number of particular cases are elucidated in \cite{MM}.

Another method to specify an ideal $I$ defining an artinian quotient $R/I$ is by means of its Macaulay inverse system $I^{-1}$ (see \eqref{eq:Iperp}). Macaulay-Matlis duality relates the $R$-modules $R/I$ and $I^{-1}$ to each other. Iarrobino defines compressed algebras as a class of graded algebras parametrized by their Macaulay inverse systems that exhibit extremal behavior in their Hilbert function. Specifically, compressed algebras have the maximal Hilbert function possible given the dimension of their socle and the embedding dimension. These rings are studied in \cite{EI, Iarrobino, FrobergLaksov, Green}, among others. With respect to the parametrization corresponding to the inverse system compressed algebras are general, meaning that they form a dense Zariski open set.
  In \cite{Boij} Boij studies the betti numbers of compressed level algebras giving lower bounds for these numbers and finding that  these lower bounds are attained by compressed algebras of large enough socle degree. This  leaves open several questions which are still being investigated.  In \cite{RS} Rossi and \c{S}ega study resolutions of modules over compressed Gorenstein local rings showing that all such modules have rational Poincar\'e series with a common denominator.

Inspired by these well-known problems and results, our goal in this paper is to discuss analogous phenomena concerning graded algebras equipped with an action of the symmetric group. To construct such algebras, consider 
the natural action of the symmetric group $\sym_n$ on the polynomial ring $R=\kk[x_1,\ldots, x_n]$ given by 
$$
\sigma \cdot f(x_1,\ldots, x_n)=f(x_{\sigma(1)}, \ldots, x_{\sigma(n)})\quad \text{for}\quad  \sigma\in \sym_n\,. 
$$
 We are interested in homogeneous ideals which acquire an induced action from the action of $\sym_n$ on $R$, which we refer to as symmetric ideals. The study of such ideals is both a classical topic in commutative algebra and one of current interest. 
 In particular, the graded betti numbers for (certain families of) monomial symmetric ideals are the focus of the recent works  \cite{Galetto}, \cite{Satoshi1}, \cite{Biermann} and \cite{Satoshi-Raicu} and related invariants such as Castelnuovo-Mumford regularity are studied in \cite{Raicu1, Nagel2}. 
The symmetric ideals discussed in this paper behave quite differently from most symmetric monomial ideals and therefore novel methods are introduced to study them in our work.  We were inspired to study general symmetric ideals by \cite{Kretschmer}, which describes the radical of such an ideal.  
 
 In this paper, we focus initially on homogeneous symmetric ideals having all minimal generators of the same degree, deducing later that this is not a restrictive assumption (see \cref{rem: multiple}).  We term {\em $(r,d)$-symmetric ideals} those symmetric ideals that can be generated up to the action of the symmetric group by an $r$-dimensional vector space of homogeneous polynomials of degree $d$. Such ideals can be parametrized by the Grassmannian variety of $r$-dimensional subspaces of the $N=\binom{n+d-1}{d}$-dimensional vector space of forms of degree $d$ forms, $R_d$. This parametrization is given by a map $\Phi$ that takes a vector space $V$ to the ideal  $(V)_{\sym_n}$ generated by the orbits of elements of $V$ under the action of the symmetric group $\sym_n$. To summarize, we have an onto map:
 \begin{align}\label{Grassmannian intro}
&\Phi: \Gr(r,R_d) \to (r,d) \text{-symmetric ideals} \\
& \Phi(V) :=(V)_{\sym_n}\,,\qquad \text{where }  (V)_{\sym_n}=(\sigma\cdot f : \sigma\in \sym_n, f\in V). \nonumber
\end{align}
The case $r=1$ has been considered in our previous work \cite{HSS24} joint with M.\,Harada, where the $(1,d)$-symmetric ideals are called {\em principal symmetric ideals}. 

Another way to parametrize symmetric ideals stems from Macaulay duality. The inverse system of a symmetric ideal of $R$ is an $R$-submodule of a dual  ring $S$ (see \ref{eq:contraction}). This ring is also equipped with an action of $\sym_n$. In \cite{HSS24} we discovered that the inverse system of a principal symmetric ideal is generated by $\sym_n$-invariant polynomials in $S$. Therefore we consider the rings of invariant polynomials $R'=R^{\sym_n}$ and $S'=S^{\sym_n}$. Their elements form the image of the {\em Reynolds operator} $\rho(f)=\frac{1}{n!} \sum_{\sigma\in \sym_n} \sigma \cdot f$ applied to $R$ and $S$ respectively. A key component of our approach is to parametrize the symmetric ideals of $R$ that have inverse systems generated by elements of $S'$. Specifically we term a quotient $R/I$ a {\em narrow algebra with $(s,d)$--invariant socle} if the inverse system has the form $I^{-1}=W+S_{\geq-d+1}$, where $W$ is a vector subspace of $S'_{-d}$ and $\dim_\kk(W)=s$.

Working with $R'$ and $S'$ presents the advantage that up to isomorphism the graded vector spaces $R'_d$ and $S'_{-d}$ only depend  on $d$ and not on $n$, provided that $n\geq d$. This  allows to parametrize narrow algebra having $(s,d)$--invariant socle by a significantly smaller space than the source of \eqref{Grassmannian intro} above, 
which has the advantage of being {\em uniform}, that is, independent  of the number of variables and dependent only upon $s$ and $d$. To set this up, consider the following map induced by $\rho$ 
 \begin{equation}\label{alpha map}
\alpha: \Gr(r,R_d)\to \bigcup_{0\leq i\leq r} \Gr(i,R'_d)
\qquad \alpha(V):=\langle \rho(f) : f\in V\rangle.
\end{equation}
Moreover we denote by $\beta$ the  map 
  \begin{align}\label{beta map}
&\beta:  \Gr(P(d)-r,S'_{-d}) \to  \text{symmetric ideals}   \\
&\beta(W):= \Ann_R(W+S_{\geq -d+1}), \nonumber
\end{align}
where the operator $\Ann_R(-)$ implements Macaulay-Matlis duality  as described in \ref{s: duality} and $P(d)$ denotes the number of partitions of $d$ with at most $n$ parts, which is equal to $\dim_\kk R'_d$ and also $\dim_\kk S'_{-d}$. 

Assuming that $n$ is sufficiently large, in the course our investigation we find nonempty Zariski open sets $O\subset \Gr(r,R_d)$ and $G\subset \Gr(r,R'_d)$ so that $\alpha(O)= G$,   
and diagram \eqref{eq: CD} below commutes.

We call the ideal $\Phi(V)=(V)_{\sym_n}$ a {\em general $(r,d)$-symmetric ideal} whenever $V\in O$, see \Cref{d: def-gsi}. 

\begin{equation}\label{eq: CD}
\begin{tikzcd}[row sep =9mm, column sep = 2.3mm]
  \Gr(r,R_d)\arrow[r,symbol=\supseteq]   & O \arrow[]{rrrrrrrrrr}{\Phi}\arrow[twoheadrightarrow, d, "\alpha"'] &&&&&&&&&& (r,d)\!\!\!\!\!\arrow[r,symbol=\text{-}]&\text{\!\!\!\!\!symmetric ideals}  \\
    \Gr(r,R'_d)\arrow[r,symbol=\supseteq] & G  \arrow{rrrrrrrrrr}{\perp} \arrow{urrrrrrrrrr}{\Psi} 
    &&&&&&&&&& G^\perp\! \arrow[hookrightarrow, u, "\beta"']\arrow[r,symbol=\subseteq]  & \!\Gr(P(d)-r,S'_{-d}).
\end{tikzcd}
\end{equation}

Denoting the composition of $\perp$ and $\beta$ in \eqref{eq: CD} by $\Psi$ gives an alternate parametrization
\begin{equation}\label{first map psi}
    \Psi:G \to (r,d)\text{-symmetric ideals}.
\end{equation}
In view of the surjectivity of $\Psi$, which we establish in \Cref{t: CD}, every general symmetric ideal is of the form $\Psi(U)$ for a unique $U\in G$. This shows that, like $\Phi$, $\Psi$ is a parametrization of the general $(r,d)$-symmetric ideals, but it is superior to $\Phi$ because $\Psi$ is one-to-one and the parametrizing space $G$ is independent of the number of variables, provided $n\geq d$.

Our results regarding diagram \eqref{eq: CD} can be summarized as follows.

\begin{introthm}[\Cref{t: CD}]\label{thmA}
Suppose $\kk$ is  infinite  with ${\rm char}(\kk)=0$ or $\ch(\kk)>n$ and let $d,r$ be positive integers with $r\le P(d)$. Assume $n$ is sufficiently large. Then there exist nonempty open sets $O\subseteq \Gr(r,R_d)$ and $G\subseteq \Gr(r,R'_d)$ such that 
\begin{enumerate}
    \item diagram  \eqref{eq: CD} commutes, 
    \item $\Psi(G)=\Phi(O)$,
    \item $\alpha(O)=G$ and
    \item  $\Psi|_G$ is bijective and thus  \eqref{first map psi} gives a birational parametrization of $(r,d)$-symmetric ideals in terms of the parameter space $\Gr(r, \kk^{P(d)})$. 
\end{enumerate}

\end{introthm}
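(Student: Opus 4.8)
The plan is to work out explicitly, for $n$ large, the relationship between a generic subspace $V \in \Gr(r,R_d)$ and its image under the Reynolds operator, and then to trace the inverse-system description through Macaulay--Matlis duality. First I would show that for $V$ general, the restriction of $\rho$ to $V$ is injective, so that $\alpha(V) = \langle \rho(f) : f \in V\rangle$ is genuinely $r$-dimensional; this identifies $O$ (at least generically) with the locus where $\dim \alpha(V) = r$, and since $\dim R'_d = P(d) \ge r$ by hypothesis, a dimension count shows this locus is a nonempty Zariski open subset of $\Gr(r,R_d)$. Dually, I would define $G \subseteq \Gr(r,R'_d)$ as the image of this open locus under $\alpha$ and check it is open; the key input here is that $\alpha$, being induced by a linear surjection $R_d \twoheadrightarrow R'_d$ (the Reynolds operator is a projection onto the invariants), is an open map on the relevant Grassmannian strata, so $G = \alpha(O)$ is open, giving (3).

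Next I would establish the commutativity (1). The content is that the symmetric ideal $(V)_{\sym_n}$ depends only on the subspace of invariants spanned by the Reynolds images of elements of $V$ — intuitively because $(V)_{\sym_n}$ is itself $\sym_n$-stable, so its structure is controlled by invariant data. Concretely, I expect one shows $(V)_{\sym_n}^{-1}$, the inverse system, is of the narrow form $W + S_{\ge -d+1}$ with $W = \alpha(V)^\perp \subseteq S'_{-d}$ under the perfect pairing between $R'_d$ and $S'_{-d}$; this is exactly what the map $\perp$ in the diagram records, and it uses the observation (from \cite{HSS24}, generalized here) that inverse systems of general symmetric ideals are spanned by invariants. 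Then $\beta(\perp(\alpha(V))) = \Ann_R(\alpha(V)^\perp + S_{\ge -d+1})$, and Macaulay--Matlis duality returns precisely $(V)_{\sym_n} = \Phi(V)$. Establishing (2), $\Psi(G) = \Phi(O)$, is then formal from (1) and $\alpha(O) = G$.

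For (4), injectivity of $\Psi|_G$: given $U \in G$, the ideal $\Psi(U) = \beta(U^\perp)$ has inverse system $U^\perp + S_{\ge -d+1}$, and one recovers $U$ as the degree-$d$ part of the inverse system intersected with $S'$, dualized back — i.e. $U$ is literally reconstructed from $\Psi(U)$ via duality, so $\Psi|_G$ is injective. Surjectivity onto the general $(r,d)$-symmetric ideals is definitional once (2) is known (every general ideal is $\Phi(V)$ for some $V \in O$, hence $\Psi(\alpha(V))$). Rationality of the parametrization follows because $\Psi$ and its inverse are given by the evidently polynomial/rational operations of applying $\rho$, taking orthogonal complements with respect to a fixed pairing, and forming annihilators; and the identification $R'_d \cong \kk^{P(d)}$ is independent of $n$ once $n \ge d$, since the monomial symmetric functions of degree $d$ in $n \ge d$ variables are in bijection with partitions of $d$.

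The main obstacle I anticipate is the commutativity step (1), and specifically the claim that the inverse system of a general $(r,d)$-symmetric ideal is spanned by $\sym_n$-invariant elements together with everything in lower degrees — i.e. that $\Phi(V)$ is a narrow algebra with $(P(d)-r,d)$-invariant socle. For $r=1$ this is a theorem of \cite{HSS24}, but for general $r$ it is genuinely new: one must show that the "$r$ general directions" in $R_d$ cut down the inverse system to something whose top graded piece lands inside $S'_{-d}$, which presumably requires a careful genericity argument showing that $\dim_\kk (\Phi(V))_d = N - P(d) + r$ (the expected, maximal drop) and then identifying the $P(d)-r$-dimensional space $(\Phi(V)^{-1})_{-d}$ with a subspace of the invariants. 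This is where the hypotheses $n$ large and $\ch(\kk) = 0$ or $\ch(\kk) > n$ (ensuring $\rho$ is well-defined and $R_d = R'_d \oplus (\text{augmentation})$ as $\sym_n$-modules) will be used essentially.
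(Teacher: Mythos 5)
Your plan captures the right architecture (factor $\Phi$ through $\alpha$, identify $\Phi(V)$ with $\Ann_R(\alpha(V)^\perp+S_{\ge -d+1})$, recover $U$ from $\Psi(U)$ by duality), and you correctly flag the hard step. But as written there are two genuine gaps that your proposed arguments do not close.

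First, non-emptiness of the relevant open sets does not follow from a dimension count. The locus $\{V:\dim_\kk\alpha(V)=r\}$ is indeed open and nonempty, but it is far too large: commutativity $\Phi(V)=\Psi(\alpha(V))$ forces $\dim_\kk[(V)_{\sym_n}]_d=\dim_\kk R_d-(P(d)-r)$, i.e.\ $V$ must lie in the smaller locus $\widetilde O$ where the symmetric orbit of $V$ spans a space of maximal dimension. Semicontinuity makes $\widetilde O$ open, but to show it is \emph{nonempty} one must exhibit an explicit $V$ achieving this dimension; the paper does this via \cref{c:f} and \cref{p:new-example}, building polynomials $f_{\bsa}$ (a general invariant part plus carefully chosen admissible binomials with disjoint supports) whose orbits provably span a space of codimension $P(d)-1$ in $R_d$. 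Your proposal has no substitute for this existence argument, and it is exactly where the hypothesis ``$n$ sufficiently large'' enters. Second, even granting $\dim_\kk\Phi(V)_d=\dim_\kk\Psi(\alpha(V))_d$ and the containment $\Phi(V)\subseteq\Psi(\alpha(V))$ (which follows from \cref{lem: perp map}), equality of the \emph{ideals} requires knowing that $\Psi(U)$ has no minimal generators in degree $>d$. This is not automatic: it is the content of \cref{thm: narrow}, proved via the linear-syzygy property $\mathcal P(U^\perp)$ of \cref{prop: dimension linear relations}, and it is precisely what forces $G$ to be a \emph{proper} open subset of $\Gr(r,R'_d)$ defined by explicit rank conditions, rather than simply $\alpha(O)$ for your naive $O$. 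In the paper the logic runs in the opposite direction from yours: $G$ is constructed first (independently of $n$ for $n\ge d$), $O$ is then defined as $\widetilde O\cap\alpha^{-1}(G)$, and $\alpha(O)=G$ is deduced from commutativity — so you also do not need to prove that $\alpha$ is an open map, a claim you assert but which would itself require justification.
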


The parametrization through the map $\Phi$ is well suited to establishing algebraic and homological properties of general $(r,d)$-symmetric ideals. 
We summarize them in \Cref{thmB} below, noting that item (2) says that general $(r,d)$-symmetric ideals are extremal in terms of their Hilbert functions among all symmetric ideals with the same number of generators up to symmetry, while item (3) presents the betti numbers of such ideals as being nearly  minimal  consistent with the Hilbert function in (2) (the minimal such betti numbers would additionally satisfy $\ell\cdot b=0$).

\begin{introthm}[\Cref{thm: main}, \Cref{cor: extremal HF}]\label{thmB}
Suppose $\kk$ is  infinite  with ${\rm char}(\kk)=0$ or $\ch(\kk)>n$ and fix positive integers $d$, $r$. Assume 
\[
n\ge  \max\left\{d+1, 1+\frac{1}{r} \sum_{i=0}^{d-1} P(i)\right\}\,.
\]
Then a general $(r,d)$-symmetric ideal $I$ of ${\kk}[x_1,\ldots, x_n]$ satisfies the following: 

\begin{enumerate}
\item The Hilbert function of $A=R/I$ is given by
\[
 \HF_A(i):=\dim_\kk A_i = \begin{cases}\dim_\kk R_i &\text{if $i\le d-1$}\\
\max\{ P(d)-r,0\}&\text{if $i=d$}\\
 0 &\text{if $i>d$.}\
 \end{cases}
\]
\item $A$ has the smallest Hilbert function among all cyclic $R$-modules  $R/J$ with $J$ an $(r,d)$-symmetric ideal.
\item The betti table of $A$ has the form
\[
\begin{matrix}
     &0&1&2&\cdots&i &\cdots & n-2 &n-1&n\\
     \hline
     \text{total:}&1&u_1&u_2&\cdots&u_i&\cdots&u_{n-2} &u_{n-1}+\ell&a+b\\
     \hline
     \text{0:}&1&\text{.}&\text{.}&\text{.}&\text{.}&\text{.}&\text{.}&\text{.}&\text{.}\\
     \text{\vdots}&\text{.}&\text{.}&\text{.}&\text{.}&\text{.}&\text{.}&\text{.}&\text{.} &\text{.}\\
     \text{d-1:}&\text{.}&u_1&u_2&\cdots&u_i&\cdots&u_{n-2} &u_{n-1}&b\\
     \text{d:}&\text{.}&\text{.}&\text{.}&\text{.}&\text{.}&\text{.}&\text{.}&\ell & a \\
     \end{matrix}
\]
with 
\begin{eqnarray*}
\ell &=& \max\{P(d)-P(d-1)-r,0\} \\
a &=&\max\{P(d)-r,0\},\\
 b&=&
 \binom{n+d-2}{d-1}-an+ \ell  \\
  u_{i+1} &=& \binom{n+d-1}{d+i}\binom{d+i-1}{i}-a\binom{n}{i}. 
  \end{eqnarray*}
\item $A$ has the Weak Lefschetz Property. 
\end{enumerate}

Moreover,  the Poincar\'e series of all finitely generated graded $A$-modules are rational, sharing a common denominator. When $d>2$ or $d=1$ the ring $A$ is Golod. When $d=2$, then $A$ is Koszul. The ring $A$ is Gorenstein if and only if $d=2$ and $r=1$, or $d=1$.
\end{introthm}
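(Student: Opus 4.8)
The plan is to establish the four numbered items and then the concluding assertions, throughout writing $I=(V)_{\sym_n}$ for a general $V\in O\subseteq\Gr(r,R_d)$ and exploiting the $\sym_n$-action on $R$. For items (1) and (2), decompose $R_d=\bigoplus_\tau\tau^{\oplus m_\tau}$ into $\sym_n$-isotypic components. A rank computation shows that for general $V$ the span $\Span(\sym_n\cdot V)$ meets $\tau^{\oplus m_\tau}$ in $\tau^{\oplus\min\{m_\tau,\,r\dim\tau\}}$; the trivial representation occurs in $R_d$ with multiplicity $P(d)$, while by representation stability every nontrivial $\tau$ appearing in $R_d$ has $\dim\tau\ge n-1$ and multiplicity at most $\sum_{i=0}^{d-1}P(i)$, independently of $n$, so the hypothesis $n\ge 1+\tfrac1r\sum_{i=0}^{d-1}P(i)$ forces $m_\tau\le r\dim\tau$ for every nontrivial $\tau$. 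Hence $A_d=R_d/\Span(\sym_n\cdot V)$ is the trivial representation of dimension $\max\{P(d)-r,0\}$; the analogous count in degree $d+1$ (equivalently, the inverse-system description $I^{-1}=U^\perp+S_{\ge -d+1}$ of \cref{thmA}) gives $A_{>d}=0$, and $A_i=R_i$ for $i<d$ is automatic, proving (1). For (2), $W\mapsto\dim\Span(\sym_n\cdot W)$ is lower semicontinuous on $\Gr(r,R_d)$, hence maximal at general $W$, so $\dim J_d\le\dim I_d$ for every $(r,d)$-symmetric ideal $J$ and $\HF_{R/J}\ge\HF_A$ pointwise.

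For item (3), from $A_{>d}=0$ and $I\subseteq\m^d$ we get $\m^{d+1}\subseteq I\subseteq\m^d$, so $\m^d/I$ is concentrated in degree $d$, killed by $\m$, and $\sym_n$-isomorphic to the trivial representation $A_d$; this yields a short exact sequence of graded $\sym_n$-modules $0\to\kk(-d)^{a}\to A\to R/\m^d\to 0$ with $a=\max\{P(d)-r,0\}$. The long exact sequence in $\Tor^R(-,\kk)$ forces $\beta_{i,j}(A)=0$ unless $(i,j)=(0,0)$ or $j\in\{i+d-1,i+d\}$, and gives $\beta_{i,i+d-1}(A)=\dim\ker\delta_i$ and $\beta_{i,i+d}(A)=\dim\coker\delta_{i+1}$, where $\delta_i\colon\Tor_i^R(R/\m^d,\kk)_{i+d-1}\to\Tor_{i-1}^R(\kk(-d)^a,\kk)_{i+d-1}$ is the connecting map, of source dimension $\binom{n+d-1}{d+i-1}\binom{d+i-2}{i-1}$ (Eliahou--Kervaire) and target dimension $a\binom{n}{i-1}$. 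One shows $\delta_i$ is surjective for $i\le n-1$ by equivariance: $\delta_i$ is $\sym_n$-linear, its target involves only the hook types $V(n-i+1,1^{i-1})$ and $V(n-i+2,1^{i-2})$, each of multiplicity $a$, and for $n$ in our range these occur in the source with multiplicity $\ge a$ while the induced maps on isotypic components have maximal rank for general $V$. This yields $\beta_{i,i+d-1}(A)=u_i$ for $i\le n-1$, $\beta_{i,i+d}(A)=0$ for $i\le n-2$, and $\beta_{n,n+d}(A)=a$ (since $\delta_{n+1}=0$). Finally $b:=\beta_{n,n+d-1}(A)=\dim_\kk\Soc(A)_{d-1}$ equals the number of minimal generators of $\omega_A\cong I^{-1}$ in degree $-(d-1)$, namely $\binom{n+d-2}{d-1}-\dim_\kk(R_1\circ U^\perp)$; since $U^\perp$ spans invariants, $R_1\otimes U^\perp$ involves only the trivial and standard isotypic types, and analyzing $R_1\circ U^\perp\subseteq S_{-d+1}$ on these two components gives $\dim(R_1\circ U^\perp)=an-\ell$ with $\ell=\max\{P(d)-P(d-1)-r,0\}$, whence $b=\binom{n+d-2}{d-1}-an+\ell$ and $\beta_{n-1,n+d}(A)=\ell$ (consistently with the Euler identity $b-\ell=\binom{n+d-2}{d-1}-an$). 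The four displayed formulas for $\ell,a,b,u_{i+1}$ follow.

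For item (4), since $A_i=R_i$ for $i<d$ and $A_{>d}=0$, the only multiplication $\cdot\ell\colon A_i\to A_{i+1}$ whose maximal rank is not automatic—the others are injective because $R$ is a domain, or are zero—is $\cdot\ell\colon A_{d-1}=R_{d-1}\to A_d$, which must be onto; the form $x_1$ works, for if $\cdot x_1$ were not onto, a nonzero functional on $A_d$ would produce an $\sym_n$-invariant bilinear form $B\colon R_1\times R_{d-1}\to\kk$ (invariant as $A_d$ is trivial) with $B(x_1,R_{d-1})=0$, whence $B(x_i,R_{d-1})=0$ for all $i$ by invariance, so $B\equiv 0$, a contradiction; hence $A$ has the WLP. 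For the concluding statements: when $d\ge 3$, $\Tor_i^R(A,\kk)$ lies in internal degrees $\ge i+d-1$ for $i\ge 1$, so a product $\Tor_i^R(A,\kk)\cdot\Tor_j^R(A,\kk)$ with $i,j\ge 1$ lands in internal degree $\ge(i+j)+2(d-1)>(i+j)+d$ and vanishes, as do all Massey products, so the Koszul homology algebra is trivial and $A$ is Golod; for $d=1$ (so $A=\kk$ when $r\ge1$) this is trivial. For $d=2$: $r\ge 2$ gives $A=R/\m^2$, a quadratic monomial quotient, hence Koszul, while $r=1$ gives, via \cref{thmA}, $A=R/\Ann_R(Q)$ for a nondegenerate $\sym_n$-invariant quadric $Q$, i.e.\ $A$ is Gorenstein with Hilbert function $(1,n,1)$, which is Koszul (after a linear change of coordinates it has a quadratic Gr\"obner basis). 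Gorensteinness means $\dim_\kk\Soc(A)=a+b=1$, which by the formulas for $a,b$ holds for $n$ in our range exactly when $d=1$ or ($d=2$ and $r=1$, so $a=1$, $\ell=b=0$). Rationality of the Poincar\'e series of all finitely generated graded $A$-modules with a common denominator follows from Golodness (Levin's theorem) in the Golod cases, and from the Rossi--\c{S}ega theorem on compressed Gorenstein rings when $d=2,r=1$ (where $A$ is compressed of socle degree $2$).

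The main obstacle is the analysis in (3): proving that the general member of the family has maximal-rank connecting maps $\delta_i$ for $i<n$, and pinning down the two ``extra'' Betti numbers $\ell$ and $b$. Both reduce to showing that certain $\sym_n$-equivariant maps—built from the multiplication of $R$ and from the inverse system $U^\perp+S_{\ge -d+1}$—attain the expected rank on each isotypic component, hence to controlling multiplicities of hook and standard irreducibles in symmetric powers of the permutation representation, and this is exactly where the explicit lower bound on $n$ enters.
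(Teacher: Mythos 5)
Your overall architecture is genuinely different from the paper's: the paper reduces everything to the statement that $A$ is $d$-extremely narrow (\cref{thm: narrow}) and then quotes the structural results of \cite{HSS24}, whereas you work directly with the $\sym_n$-isotypic decomposition of $R_d$ and the long exact sequence in $\Tor$ coming from $0\to\kk(-d)^a\to A\to R/\m^d\to 0$. Several pieces are correct and attractive: the semicontinuity proof of item (2) (the paper instead uses the Reynolds operator and the surjection $R/J\to R'/\rho(J)$ in \cref{cor: extremal HF}), and especially your proof of the WLP via the $\sym_n$-invariant bilinear form $B(\ell,f)=\varphi(\ell f)$, which is slicker than the paper's inverse-system argument (\cref{p: e-narrow-has-WLP}) and uses the fact, available from item (1), that $A_d$ is a trivial representation. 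The degree argument for Golodness when $d\ge 3$ and the identification of the Gorenstein cases are also fine.

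The genuine gap is in item (3), exactly where you flag ``the main obstacle'': the assertions that (a) the connecting maps $\delta_i$ attain maximal rank on each isotypic component for general $V$ and $1\le i\le n-1$, and (b) $\dim_\kk(R_1\circ U^\perp)=an-\ell$, equivalently $\dim_\kk L_{U^\perp}=\max\{P(d)-P(d-1)-r,0\}$, are stated but not proved. Controlling multiplicities of hooks in the source only shows that full rank is \emph{possible}; semicontinuity says the rank of an algebraically varying map is maximal on an open set, but to identify that maximum with the full rank you must exhibit one $U$ (or $V$) achieving it --- a priori the maps could be degenerate on some isotypic component for every member of the family. This witness-plus-rank-computation is the technical heart of the paper: \cref{prop: dimension linear relations} computes $\dim_\kk L_{U^\perp}$ by writing down the explicit matrices built from the structure constants $c^\lambda_\mu$ of \cref{lem: c_mu_nu}, verifying they are in echelon form at the witness $U_0=\langle m_\lambda:\lambda\notin\Lambda_0\rangle$, and using \cref{lem: W0}(3) to kill the non-invariant (your ``standard'') part of the linear syzygies. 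Without an analogue of these computations your formulas for $u_i$, $\ell$ and $b$ are unestablished. A smaller gap of the same flavor occurs in item (1): the multiplicity bound $m_\tau\le\sum_{i=0}^{d-1}P(i)$ for nontrivial $\tau$ is asserted ``by representation stability''; this is essentially the content of \cite{Walker}, which the paper cites for the effective bound on $n$, and should be proved or attributed.
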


Our previous paper \cite{HSS24} contains similar results in the case when $r=1$, that can be recovered from \cref{thmB} and served as inspiration for our approach. The recent paper \cite{Walker} by N. Walker leads to the  effective bound on $n$ given in \cref{thmB} and shows this bound is tight, while independent results presented in this paper establish the validity of \cref{thmB} for large enough $n$ without providing an explicit value.

To generalize our previous results, we now let the number of variables of the polynomial ring $R$ grow to infinity. To be more precise, for each positive integer $n$ we let $R_n=\kk[x_1, \ldots, x_n]$.  The colimit of the rings $R_n$ under their natural inclusions, where $n$ ranges over the positive integers, admits a natural action of the group $\sym_{\infty}=\bigcup_{n\geq 1}\sym_n$ of permutations of $\N$ that have finitely many non-fixed points. This action is of utmost importance in the study  of representation stability for symmetric group representations. A major achievement in this setting is the development of Noetherianity up to symmetry for ideals of this (non-Noetherian) colimit and its applications to uniformity in commutative algebra; see \cite{Cohen}, \cite{Hillar}, \cite{HS}, \cite{Erman}, \cite{Draisma}. 

If $I$ is a symmetric ideal of $R_m$ for some $m$, we define a sequence
\begin{equation}\label{eq: sym-inf chain}
    I=I_m\subseteq I_{m+1}\subseteq \dots I_n \subseteq \dots 
\end{equation}
where $I_n$ is the ideal of $R_n$ for all $n\ge m$ described by $I_n=(I_m)_{\sym_n}$.
The chain \eqref{eq: sym-inf chain} is a $\sym_{\infty}$-invariant chain, meaning that $\sym_{n'}(I_n)\subseteq I_{n'}$ for all $m\leq n\leq n'$. Properties of $\sym_{\infty}$-invariant chains have been studied from the viewpoint of commutative algebra in \cite{NagelRomer, Nagel1, Nagel2, Raicu1, Satoshi-Raicu}, among many others. A main problem in this area is to study the asymptotic growth of various numerical invariants for $\sym_{\infty}$-invariant chains of homogeneous ideals.

In view of our results, a natural question raised by C. Raicu is whether $I$ being a general $(r,d)$-symmetric ideal implies the same is true about the subsequent ideals in the chain \eqref{eq: sym-inf chain}. While this is far from evident from the perspective of the parametrization \eqref{Grassmannian intro},  we succeed in answering this question in the affirmative by making use of the alternate parametrization \eqref{first map psi}, particularly its feature  that the parameter space $\Gr(r,R'_d)$ is independent of the number of variables of $R$, provided $n\geq d$. This leads to the realization that diagrams of the form \eqref{eq: CD} where the number of variables of $R$ varies can be made compatible in a manner made precise in \Cref{thm: CD-n}. This  crucial ingredient allows us to analyze the asymptotic behavior of numerical invariants in chains of general $\sym_{\infty}$-invariant ideals, answering \cite[Problem 1.1]{KLR} for chains of general symmetric ideals.

\begin{introthm}[\Cref{thm: CD-n}, \Cref{cor: sym-infty chain}]\label{thmC}
Suppose $\kk$ is a field  with ${\rm char}(\kk)=0$ and let $d,r$ be positive integers with $r\le P(d)$.
Assume $m\gg 0$.  Let $I$ be a general $(r,d)$-symmetric ideal of $R_m$, and consider the $\sym_{\infty}$-invariant chain $\{I_n\}_{n\in \N}$ with $I_n=(I)_{\sym_n}$  and its colimit $\mathcal I$.  For $a, \ell$ as defined in \Cref{thmB}(3) the following  hold: 
\begin{enumerate}
\item the ideals $I_n=(I_m)_{\sym_n}$ are general $(r,d)$-symmetric ideals for all $n\geq m$.
 \item $\dim(R_n/I_n)=0$  and ${\rm ht}(I_n)=n$ for all $n\ge m$, i.e, the rings $R_n/I_n$ are artinian. 
 \item $\pd_{R_n}(R_n/I_n)=n$ for all $n\ge m$
 \item $\reg_{R_n}(R/I_n)=d$ for all $n\ge m$
 \item the $i$-th betti numbers of $R_n/I_n$ are polynomials in $n$ of degree $d+i$ for $0\leq i\leq n-1$, while the $n$-th betti number of $R_n/I_n$ is a linear function of $n$.
 \item The equivariant  Hilbert series (defined in \eqref{eq: equiv HS}) is 
\[
H_\I(s,t)=\frac{s\left[\sum_{j=0}^d(1-s)^jt^{d-j}\right ]+a(1-s)^{d-1}t^d}{(1-s)^d}.
\]
 \item  The  multiplicity of $R_n/I_n$ grows polynomially as a function of $n$ with
 \[
 \lim_{n\to \infty}\frac{e(R_n/I_n)}{n^{d-1}}=\frac{1}{(d-1)!}.
 \]
 \item The equivariant betti number $\beta_{i,j}(\I,s)$ (defined in \eqref{equiv-beta}) is a rational function of $s$ for all $i,j$, and the equivariant bigraded Poincar\'e series $\Po_\I(s,t,u)$  is a rational function of $s,t,u$.  More precisely,
\begin{equation*}
P_\I(s,t,u)\!=\!\frac{1}{1-s}\!+\!stu^d\left(\frac{1}{(1\!-\!s)(1\!-\!s\!-\!stu)^d}\!-\!\frac{a}{(1\!-\!s\!-\!stu)(1\!-\!stu)}\!+\!\frac{\ell+ au +\ell su}{1-stu}\right).
\end{equation*}
\end{enumerate}
\end{introthm}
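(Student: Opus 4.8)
The plan is to prove the eight items by combining \Cref{thmA}, \Cref{thm: CD-n} and \Cref{thmB}. The linchpin is item (1): once each $I_n$ is known to be a general $(r,d)$-symmetric ideal, items (2)--(5) and (7) are instances of \Cref{thmB} applied in $n$ variables — legitimate because the numerical hypothesis of \Cref{thmB} has an $n$-independent right-hand side, so $m\gg 0$ makes it hold for every $n\ge m$ — while items (6) and (8) are generating-function repackagings of \Cref{thmB}(1) and \Cref{thmB}(3).

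\emph{Item (1).} Write $I=I_m=\Psi(U)$ for the unique $U\in G\subseteq\Gr(r,R'_d)$ supplied by \Cref{thmA}(4), and let $V\in\Gr(r,(R_m)_d)$ generate $I_m$ up to $\sym_m$-symmetry. Since taking $\sym_n$-orbits of $\sym_m$-orbits recovers $\sym_n$-orbits, $I_n=(I_m)_{\sym_n}=(V)_{\sym_n}=\Phi(V)$, where $V$ is now viewed inside $(R_n)_d$ along $R_m\hookrightarrow R_n$. The point of \Cref{thm: CD-n} is that the diagrams \eqref{eq: CD} for varying $n$ are mutually compatible: under the canonical identifications with $\kk^{P(d)}$ of the degree-$d$ invariants $R'_d$ computed in $R_m$ and in $R_n$ (valid once $n\ge d$), the Reynolds-induced map $\alpha$ still sends $V$ to $U$, and the open set $G$ may be chosen so that $U\in G$ works simultaneously for all $n\ge m$ once $m\gg 0$. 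Commutativity of \eqref{eq: CD} in $n$ variables then gives $I_n=\Psi(\alpha(V))=\Psi(U)$ with $U\in G$, hence $I_n\in\Psi(G)=\Phi(O)$ by \Cref{thmA}(2), i.e., $I_n$ is a general $(r,d)$-symmetric ideal. This is precisely where $n$-independence of the parameter space is indispensable: a general $V\in(R_m)_d$ transported naively into $(R_n)_d$ need not be general there, whereas the invariant datum $U$ remains general.

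\emph{Items (2)--(5) and (7).} Fix $n\ge m$ and set $A_n:=R_n/I_n$. (2): \Cref{thmB}(1) gives $\HF_{A_n}(i)=0$ for $i>d$, so $A_n$ is a finite-dimensional $\kk$-algebra, hence artinian, so $\dim(R_n/I_n)=0$ and $\operatorname{ht}(I_n)=n$. (3): Auslander--Buchsbaum together with $\depth A_n=0$ (equivalently, the positivity of the last column total $a+b$ of the betti table in \Cref{thmB}(3)) gives $\pd_{R_n}A_n=n$. (4): the betti table of \Cref{thmB}(3) has no rows below row $d$ and has $\beta_{n,n+d}=a\neq 0$ (for $r<P(d)$), whence $\reg_{R_n}(R_n/I_n)=d$. (5): the displayed formulas $u_{i+1}=\binom{n+d-1}{d+i}\binom{d+i-1}{i}-a\binom{n}{i}$ and $b=\binom{n+d-2}{d-1}-an+\ell$ present each betti number of $A_n$, for a fixed homological index and $n\ge m$, as an explicit polynomial in $n$ of the asserted degree, since $\binom{n+d-1}{d+i}$ has degree $d+i$ in $n$, $a\binom{n}{i}$ has degree $i$, and $a,\ell$ are independent of $n$. (7): the multiplicity of the artinian ring $A_n$ equals its length, and the hockey-stick identity $\sum_{i=0}^{d-1}\binom{n+i-1}{i}=\binom{n+d-1}{d-1}$ turns \Cref{thmB}(1) into $e(R_n/I_n)=\dim_\kk A_n=\binom{n+d-1}{d-1}+a$, a degree-$(d-1)$ polynomial in $n$ with leading term $n^{d-1}/(d-1)!$, giving the stated limit.

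\emph{Items (6) and (8).} The equivariant Hilbert series \eqref{eq: equiv HS} assembles $\{\HF_{A_n}\}_{n\ge m}$; substituting \Cref{thmB}(1) and summing the resulting rational-in-$s$ pieces $\sum_n\binom{n+i-1}{i}s^n$ yields, after collecting terms, the closed form in (6). For (8), fixing $(i,j)$ the integer $\beta_{i,j}(A_n)$ agrees with a polynomial in $n$ for all $n\ge m$ by \Cref{thmB}(3), so $\beta_{i,j}(\I,s)$ from \eqref{equiv-beta} is rational in $s$. The substantive step is to sum over all $(i,j)$: since $\pd_{R_n}A_n=n\to\infty$ this is a genuinely infinite homological sum that must telescope into the displayed rational function. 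The mechanism is that the principal summand $\sum_{i\ge 1}\binom{n+d-1}{d+i-1}\binom{d+i-2}{i-1}t^{i}u^{i+d-1}$ is exactly the enumerator of the $d$-linear minimal free resolution of $\m^d$ over $R_n$ (of Eliahou--Kervaire/Eagon--Northcott type), whose bigraded generating series over $n$ is recognizably $\tfrac{1}{1-s}+\tfrac{stu^d}{(1-s)(1-s-stu)^d}$; the remaining contributions come from the $-a\binom{n}{i}$ corrections to the $u_{i+1}$ and from the boundary entries $\ell,a,b$ occupying columns $n-1$ and $n$ of the betti table, and matching these two ranges up — their overlap near homological degree $n$ being the delicate point — produces the remaining two summands $-\tfrac{a\,stu^d}{(1-s-stu)(1-stu)}$ and $\tfrac{(\ell+au+\ell su)\,stu^d}{1-stu}$. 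I expect this last reconciliation, i.e., verifying that the divergent-looking infinite sum collapses to exactly the stated $P_\I(s,t,u)$, to be the main obstacle; it is binomial-coefficient bookkeeping with no conceptual content once \Cref{thmB}(3) is in hand.
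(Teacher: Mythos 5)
Your overall architecture is the same as the paper's: item (1) is the linchpin, items (2)--(5) and (7) are read off from \Cref{thmB} applied in $n$ variables for each $n\ge m$, and items (6) and (8) are generating-function assemblies of \Cref{thmB}(1) and (3). Items (2)--(7) are fine. But there is a genuine gap in your proof of item (1), which is exactly where all the difficulty of the theorem lives.

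You transport $V\in O_m$ into $\Gr(r,(R_n)_d)$, note correctly that $\alpha_n(V)=\alpha_m(V)=U\in G$ under the $n$-independent identification of $R'_d$ with $\kk^{P(d)}$, and then invoke ``commutativity of \eqref{eq: CD} in $n$ variables'' to conclude $I_n=\Phi_n(V)=\Psi_n(U)$. That commutativity is only available on the open set $O_n=\widetilde O_n\cap\alpha_n^{-1}(G)$. You have shown $V\in\alpha_n^{-1}(G)$, but not $V\in\widetilde O_n$, i.e.\ not that $\dim_\kk[(V)_{\sym_n}]_d\ge\dim_\kk (R_n)_d-(P(d)-r)$. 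Without that, \Cref{lem: phi subset psi} only yields the containment $\Phi_n(V)\subseteq\Psi_n(U)$, not equality, and $I_n$ could a priori fail to be general. This is not a formality: membership in $\widetilde O_n$ is a genuine condition on $n$ (cf.\ \cref{rem:Noah}, where $\widetilde O_n\ne\emptyset$ is equivalent to an explicit lower bound on $n$), and the assertion $\pi_{m,n}(O_m)\subseteq O_n$ is item (2) of \Cref{thm: CD-n} --- part of what you are proving, so citing \Cref{thm: CD-n} here is circular. The paper closes this gap with the explicit $n$-independent generators $f_{\bsa^1},\dots,f_{\bsa^r}$ of \cref{c:f}: by \cref{thm: gens}, $\Psi_n(U)=(f_{\bsa^1},\dots,f_{\bsa^r})_{\sym_n}$ for \emph{every} $n\ge m$, which gives $\Psi_n(U)=\eta_{m,n}\Psi_m(U)$ directly and hence both the bijectivity of $\eta_{m,n}$ on general ideals and $\pi_{m,n}(O_m)\subseteq O_n$. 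Your proof needs this (or some substitute argument that the codimension of $[(V)_{\sym_n}]_d$ does not grow with $n$) to go through.

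Two smaller points. In item (8) you correctly identify the decomposition of $P_\I(s,t,u)$ into the linear-strand sum, the $-a\binom{n}{i}$ corrections, and the boundary columns $n-1,n$, but you explicitly defer the verification that the sum collapses to the stated rational function; the paper carries out this computation by evaluating $\beta_{i,i+d-1}(\I,s)$ and $\beta_{i,i+d}(\I,s)$ in closed form, and your proposal is incomplete until that is done. Finally, your parenthetical restriction to $r<P(d)$ in item (4) is the right instinct --- when $r=P(d)$ one has $I_n=\m^d$ and the regularity statement needs the edge case handled separately.
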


\Cref{thmC}  shows that $\sym_\infty$-invariant chains of general symmetric ideals are much better behaved than arbitrary $\sym_\infty$-invariant chains. For instance, for our chains the Krull dimension and regularity are constant, while for arbitrary chains the former exhibits linear growth \cite[Theorem 7.10]{NagelRomer} and the latter is proven in the artinian case and conjectured in general to grow linearly \cite{Nagel2}. Moreover for our chains the multiplicity grows polynomially as a function of $n$, while for arbitrary $\sym_\infty$-invariant chains the growth is exponential \cite[Theorem 7.10]{NagelRomer}.

Our paper is organized as follows. In \cref{sec: inverse} we recall the technical foundations of our work: Macaulay-Matlis duality and the notions of narrow and extremely narrow algebras introduced in \cite{HSS24}. We also prove a new result that the latter class of algebras enjoys the Weak Lefschetz Property, an algebraic counterpart of Lefschetz's hyperplane section theorem. In \cref{sec:orthog} we discuss a perfect pairing among symmetric polynomials and in \cref{sec:linear} we use it to study linear syzygies on symmetric polynomials. In \cref{sec:narrow-inv} we define the open set $G$ in \eqref{eq: CD}, show it does not depend on $n$ as long as $n\geq d$, and prove that algebras in $\Psi(G)$ are $d$-extremely narrow (\cref{thm: narrow}). Under the additional assumption that $n$ is sufficiently large so as to enable a certain construction, we determine in \cref{s: equations} the generators of a general symmetric ideal. 
Turning our attention to the parametrization $\Phi$ in \eqref{Grassmannian intro}, we show in \cref{s: diagram} that the open set $O$ in \eqref{eq: CD} is non-empty and prove \Cref{thmA} and \Cref{thmB}. \cref{s: inverse system} reveals the surprising fact that if $n\gg0$ and $I$ is a general symmetric ideal then the inverse system of the ideal $I'=\rho(I)$ of $R'$ determines the inverse system of $I$. This can be viewed as a manifestation of the fact that the parametrization $\Phi$ factors through  $\alpha$ as indicated in \eqref{eq: CD} and  explains why inverse systems of general symmetric ideals are generated by invariant polynomials.   Finally, in \cref{s: asymptotic} we turn our attention to $\sym_\infty$ invariant chains of symmetric ideals and prove \Cref{thmC}.

\section{Inverse systems and narrow algebras}
\label{sec: inverse}
Throughout, $\kk$ denotes a field, $n\ge 2$ an integer and $R=\kk[x_1, \dots, x_n]$ the polynomial ring in $n$ variables of degree $1$ over $\kk$. We denote by $\m$ the maximal homogeneous ideal of $R$. In this section, we recall the basics of the theory of Macaulay inverse systems. Then we give an overview of narrow and extremely narrow algebras as introduced in \cite{HSS24}, extend some of the results  therein and establish that extremely narrow algebras satisfy the Weak Lefschetz Property.

\subsection{Duality and inverse systems}\label{s: duality}
We give here an overview of Macaulay inverse systems. For details, consult \cite[Appendix A]{IK} and \cite[Theorem 21.6]{Eisenbud}. Let $S=\kk[y_1, \dots, y_n]_{DP}$ denote the divided powers algebra in $n$ divided powers variables of degree $-1$ over $\kk$. We equip $S$ with an  $R$-module structure, called {\it contraction}, defined by extending the following action on monomials, by linearity in both arguments: 
\begin{equation*}
\label{eq:contraction}
x_1^{d_1}\cdots x_n^{d_n}\circ y_1^{(e_1)}\cdots y_n^{(e_n)}=
\begin{cases}
y_1^{(e_1-d_1)}\cdots y_n^{(e_n-d_n)} & \text{if } e_i-d_i\geq 0 \text{ for all } i\\
0 & \text{otherwise\,. }
\end{cases}
\end{equation*}
It is well-known that $S=\Hom_\kk(R,\kk)$ is an injective hull of $\kk$ over $R$, when considered as an $R$-module. We refer to the basis elements $y_1^{(e_1)}\cdots y_n^{(e_n)}$ of $S$ as monomials, noting that they are dual to the monomials $x_1^{e_1}\cdots x_n^{e_n}$, and we refer to the elements of $S$ as polynomials $g(y_1, \dots, y_n)$ in the (divided powers) variables $y_i$.   

If $I$ is a homogeneous ideal of $R$, then the {\it Macaulay inverse system} of $I$ is the graded $R$-submodule of $S$ given by 
\begin{equation*}
\label{eq:Iperp}
I^{-1}:=\Ann_S(I)=\{ g\in S : f\circ g=0 \text{ for all } f\in I\}. 
\end{equation*}
Since $\Ann_S(I)\cong \Hom_R(R/I, S)$, the inverse system $I^{-1}$ is isomorphic to the Matlis dual of $R/I$. It is thus a consequence of Matlis duality that 
\begin{equation*}
\label{e:duality}
 \Ann_R(I^{-1})=I \qquad \text{and}\qquad (\Ann_R(W))^{-1}=W\,.
 \end{equation*}
where $W$ is any graded $R$-submodule of $S$. 

Let $U$ be a finitely generated graded $R$-module. The {\it Hilbert function} of $U$ is the  function 
$$\HF_U\colon \mathbb Z\to \mathbb N\,,\qquad \text{with}\qquad \HF_U(i)=\dim_\kk U_i\quad\text {for all $i\in \mathbb Z$}\,.
$$
If $A=R/I$ is artinian, the Hilbert function of $A$ and of its socle $\Soc(A)=\Ann_A(\m)$ can be read off the inverse system as follows: 
\begin{gather}
\label{eq:GIperp}
\begin{split}
\HF_{I^{-1}}(-i)=\dim_\kk (I^{-1})_{-i}=\dim_\kk A_i =\HF_A(i)\\
\dim_\kk\left(\frac{I^{-1}}{\m\circ I^{-1}}\right)_{-i}=\dim_\kk \Soc(A)_i=\HF_{\Soc(A)}(i)
\end{split}
\end{gather}
The socle polynomial of an artinian algebra is the Hilbert series of the socle, namely it is the polynomial
\[
\sum_{i=0}^{s(A)}\HF_{\Soc(A)}(i)z^i \,.
\]
where $s(A)$ is the largest $i$ with $\Soc(A)_i\ne 0$, called the {\it top socle degree} of $A$. 

\subsection{Narrow and extremely narrow algebras}
\label{def:narrow}
Let $A=R/I$ be an artinian algebra quotient of $R$, with $I$ a homogeneous ideal. Denote by $s(A)$ the top socle degree of $A$ and by $t(I)$ be the initial degree of $I$, which is defined as the smallest degree of a minimal generator of $I$. Following \cite{HSS24}, we say that $A$ is {\it narrow} if $t(I)\ge s(A)$, that is, the top socle degree of $A$ is at most the initial degree of $I$.

Set $d=s(A)$. Let $F_1, \dots, F_a\in (I^{-1})_{-d}$   and define 
\[
L_{F_1, \ldots, F_a}:=\left \{(\ell_1, \ell_2, \dots, \ell_a)\in {R_1}^a\colon \sum_{i=1}^a \ell_i\circ F_i=0 \right \} \,.
\]
The $\kk$-vector space $L_{F_1, \ldots, F_a}$  is independent, up to isomorphism, of the choice of the forms $F_1, \dots, F_a$. When working with properties invariant under isomorphism (for example, when talking about the dimension of this space), we will also write $L_W$ instead of $L_{F_1, \ldots, F_a}$, where $W$ is the vector space spanned by $F_1, \dots, F_a$. Furthermore, when $W=(I^{-1})_{-d}$, we also write $L_A$ instead of $L_W$.

Let  $d\ge 1$. As in \cite{HSS24}, we say that $A$ is $d$-{\it extremely narrow} if the following hold:
\begin{enumerate}
\item[(i)] $s(A)= t(I) =d$; 
\item[(ii)] There exists a basis $F_1, \dots, F_a$ of  $(I^{-1})_{-d}$ and $x\in R_1$ such that
\begin{equation}
\label{eq:Lx}
 L_{F_1, \ldots, F_a}\subseteq x(R_0)^a.
\end{equation}
\end{enumerate} 

The result below is a combination of part(s) of the statements of \cite[Lemma 4.2, Lemma 4.5, Lemma 4.7]{HSS24}. 
 
\begin{prop}[see \cite{HSS24}]
\label{prop: narrow}
Let $d\ge 1$, $a\ge 1$, and $F_1, \dots, F_a\in S_{-d}$ linearly independent. If 
\[
I=\Ann_R\left((F_1 \dots, F_a)+S_{\geqslant-(d-1)}\right),
\]
then the algebra $A=R/I$ is narrow with $s(A)=d$ and $\dim_\kk(I^{-1})_{-d}=a$,  and the following hold: 
\begin{enumerate}
 \item The socle polynomial of $A$ is $(\dim_\kk R_{d-1}-an+\dim_\kk L_{A})z^{d-1}+az^{d}$. 
 \item $\dim_\kk I_d=\dim_\kk R_d-a$
 \end{enumerate}
 Furthermore, if there exists $x\in R_1$ such that $L_{F_1, \dots, F_a}\subseteq x(R_0)^a$, then $A$ is $d$-extremely narrow and, when $n\ge 3$, $I$ is generated in degree $d$.  
 \end{prop}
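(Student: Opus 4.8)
The plan is to establish each assertion in \Cref{prop: narrow} by unpacking the Macaulay-Matlis dictionary recorded in \S\ref{s: duality} and then feeding the numerics through \eqref{eq:GIperp}. First I would observe that $W:=(F_1,\dots,F_a)+S_{\geqslant-(d-1)}$ is a graded $R$-submodule of $S$ with $W_{-i}=S_{-i}$ for $0\le i\le d-1$ and $W_{-d}=\operatorname{Span}_\kk(F_1,\dots,F_a)$ and $W_{-j}=0$ for $j>d$; since $\Ann_R(W)=I$, Matlis duality gives $I^{-1}=W$, so directly $\dim_\kk(I^{-1})_{-d}=a$ (the $F_i$ being linearly independent) and $\HF_A(i)=\dim_\kk R_i$ for $i\le d-1$, $\HF_A(d)=a$, and $\HF_A(i)=0$ for $i>d$. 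In particular $s(A)=d$, and since $\HF_A$ vanishes above degree $d$ while $A_d\ne 0$, the ideal $I$ has initial degree $t(I)=d$ (there is nothing in $I$ below degree $d$ because $(I^{-1})_{-i}=S_{-i}$ forces $I_i=0$ there). Thus $t(I)=s(A)=d$ and $A$ is narrow.

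Next, for the socle polynomial in (1) I would invoke the second formula in \eqref{eq:GIperp}: $\HF_{\Soc(A)}(i)=\dim_\kk(I^{-1}/\m\circ I^{-1})_{-i}$. Since $(I^{-1})_{-j}=0$ for $j\ge d+1$, the only contributions are in degrees $d$ and $d-1$. In degree $d$ the module $\m\circ I^{-1}$ has no part (nothing sits above), so $\HF_{\Soc(A)}(d)=\dim_\kk(I^{-1})_{-d}=a$. In degree $d-1$ one has $(\m\circ I^{-1})_{-(d-1)}=R_1\circ\operatorname{Span}(F_1,\dots,F_a)$, whose dimension is $an-\dim_\kk L_{F_1,\dots,F_a}$ because the map $R_1^a\to S_{-(d-1)}$, $(\ell_i)\mapsto\sum\ell_i\circ F_i$ has kernel exactly $L_{F_1,\dots,F_a}$ by definition. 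Since $(I^{-1})_{-(d-1)}=S_{-(d-1)}$ has dimension $\dim_\kk R_{d-1}$, subtracting yields $\HF_{\Soc(A)}(d-1)=\dim_\kk R_{d-1}-an+\dim_\kk L_A$, proving (1); and (2) is immediate from $\HF_A(d)=a$ together with $\dim_\kk I_d=\dim_\kk R_d-\HF_A(d)$.

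For the final clause I would appeal to the combination of \cite[Lemma 4.2, Lemma 4.5, Lemma 4.7]{HSS24} cited just above the proposition: condition (i) of $d$-extreme narrowness is exactly $s(A)=t(I)=d$, already verified, and condition (ii) is precisely the hypothesis $L_{F_1,\dots,F_a}\subseteq x(R_0)^a$ for some $x\in R_1$ (with $F_1,\dots,F_a$ a basis of $(I^{-1})_{-d}$, which it is since $\dim_\kk(I^{-1})_{-d}=a$). Hence $A$ is $d$-extremely narrow by definition. The assertion that $I$ is generated in degree $d$ when $n\ge 3$ is the content of one of those lemmas of \cite{HSS24}: the extreme narrowness forces the linear syzygies of the degree-$d$ part of $I$ to be ``concentrated'' along the single linear form $x$, and a dimension count (valid once $n\ge 3$) shows $I_{d+1}=R_1 I_d$, after which Macaulay's bound or a direct Hilbert-function argument propagates this to all higher degrees; I would simply cite this rather than reprove it.

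The only genuinely delicate point is the last one: why $L_{F_1,\dots,F_a}\subseteq x(R_0)^a$ implies generation in degree $d$ precisely when $n\ge 3$ (for $n=2$ extra care is needed, as the excerpt's phrasing signals). I expect this to be the main obstacle, but since it is exactly \cite[Lemma 4.7]{HSS24}, invoked by name in the sentence preceding the proposition, the honest and efficient route is to reduce to that citation rather than re-derive the syzygy analysis here. Everything else is a transcription of the duality formulas \eqref{eq:GIperp} and an elementary rank computation for the contraction map $R_1^a\to S_{-(d-1)}$.
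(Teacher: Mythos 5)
Your proposal is correct and takes essentially the same route as the paper, which offers no argument of its own beyond citing \cite{HSS24} (Lemmas 4.2, 4.5, 4.7): your identification $I^{-1}=W$ via the double-annihilator theorem, the resulting Hilbert function, and the rank computation for $(\ell_i)\mapsto\sum\ell_i\circ F_i$ reproduce exactly the content of those lemmas, and deferring the degree-$d$ generation statement for $n\ge 3$ to the cited Lemma 4.7 is precisely what the paper does. One small repair: the vanishing of $\Soc(A)$ in degrees $i\le d-2$ does not follow from $(I^{-1})_{-j}=0$ for $j>d$ (that only handles degrees above $d$), but rather from $R_1\circ S_{-(i+1)}=S_{-i}$, which gives $(\m\circ I^{-1})_{-i}=S_{-i}=(I^{-1})_{-i}$ there; likewise note that $t(I)=d$ (as opposed to $t(I)\ge d$, which is all narrowness needs) uses $a<\dim_\kk R_d$, automatic under the hypothesis of the final clause.
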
 

 \begin{rem}
 \label{rem:L=0}
Assume the hypotheses of \cref{prop: narrow}. If $L_{F_1, \dots, F_a}=0$, then the conclusion that $I$ is generated in degree $d$, and hence $A$ is $d$-extremely narrow, also holds when $n<3$. Indeed, since $A$ is narrow, it follows from \cite[Lemma 4.2]{HSS24} that $I$ is generated in degrees $d$ and $d+1$. Further, since $L_{F_1, \dots, F_a}$ is the module of linear syzygies on a basis of $(I^{-1})_{-d}$ and $I^{-1}$ is generated in degrees $-d$ and $-d+1$,  we see that $\Tor_1^R(I^{-1}, \kk)_{-d+1}=0$, and hence  $\Tor_i^R(I^{-1},\kk)_{-d+i}=0$ for all $i\ge 1$. Then, as noted in the proof of \cite[Lemma 3.13]{HSS24} it is a consequence of Matlis duality that  
\[
\Tor_i^R(A,\kk)_j\cong \Tor_{n-i}^R(I^{-1}, \kk)_{n-j}
\]
for all $i,j$, and hence 
\[
\Tor_0^R(I,\kk)_{d+1}\cong \Tor_1^R(A,\kk)_{d+1}\cong \Tor_{n-1}^R(I^{-1},\kk)_{n-1-d}=0\,.
\]
Thus $I$ is generated in degree $d$. 
 \end{rem}

\subsection{Weak Lefschetz Property}
 The algebra $A$ is said to have the {\it Weak Lefschetz Property} (abbreviated WLP) if the multiplication $A_{i-1}\xrightarrow{\ell} A_i$ by a general linear form $\ell$ has maximal rank for all $i$. More precisely, if we parametrize the elements of $R_1$ by a projective space $\mathbb P^{n-1}$ using their coefficients, then $A$ has WLP if there exists a nonempty open subset $U$ of $\mathbb P^{n-1}$ such that $A_{i-1}\xrightarrow{\ell} A_i$ for all $i$ and all $\ell$ parametrized by $U$. In general, neither homological invariants of artinian rings such as their betti numbers nor information on their Hilbert functions can be used to determine whether an algebra satisfies the WLP; see \cite[Example 3.4]{Wiebe} or \cite{AbdallahSchenck}. However, in the case of $d$-extremely narrow algebras we are able to prove the WLP, and we can also describe the non-Weak Lefshetz locus. 
\begin{prop}
\label{p: e-narrow-has-WLP}
If $A=R/I$ is $d$-extremely narrow, then $A$ has the Weak Lefschetz Property. More precisely, if $\ell\in R_1$ is not a multiple of $x$, where $x\in R_1$ is as in \eqref{eq:Lx}, then the map $A_{i-1}\xrightarrow{\ell} A_i$ has maximal rank for all $i$. 
\end{prop}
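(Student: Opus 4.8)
The plan is to reduce the Weak Lefschetz Property to the behaviour of one multiplication map and then transport that map through Macaulay--Matlis duality, where the hypothesis \eqref{eq:Lx} applies almost directly.

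First I would record the Hilbert function of $A=R/I$. Because $t(I)=d$, we have $I_i=0$ and hence $A_i=R_i$ for $i\le d-1$; because $s(A)=d$ and $A$ is artinian, $A_i=0$ for $i>d$; and \eqref{eq:GIperp} shows that the number $a$ of basis elements $F_1,\dots,F_a$ of $(I^{-1})_{-d}$ equals $\dim_\kk A_d$. Consequently, for every $i\ne d$ and every nonzero $\ell$ the map $A_{i-1}\xrightarrow{\ell}A_i$ automatically has maximal rank: for $i\le d-1$ it is multiplication by the nonzerodivisor $\ell$ between $R_{i-1}$ and $R_i$ in the domain $R$, hence injective; for $i=d+1$ the target vanishes; for $i>d+1$ both terms vanish. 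So the entire statement reduces to showing that $A_{d-1}\xrightarrow{\ell}A_d$ has maximal rank whenever $\ell\in R_1$ is not a scalar multiple of $x$.

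Next I would dualize this map. Since $A_{d-1}=R_{d-1}$, \eqref{eq:GIperp} forces $(I^{-1})_{-(d-1)}=S_{-(d-1)}$; and under the perfect pairing between $A_j$ and $(I^{-1})_{-j}$ coming from $I^{-1}\cong\Hom_\kk(A,\kk)$, the multiplication $A_{d-1}\xrightarrow{\ell}A_d$ is the $\kk$-linear transpose of the contraction map $\gamma_\ell\colon (I^{-1})_{-d}\to (I^{-1})_{-(d-1)}$ sending $g$ to $\ell\circ g$. Hence $A_{d-1}\xrightarrow{\ell}A_d$ is surjective exactly when $\gamma_\ell$ is injective, and injectivity of $\gamma_\ell$ forces $a=\dim_\kk A_d\le\dim_\kk A_{d-1}$, so in that case the surjection has maximal rank. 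It therefore remains to prove that $\gamma_\ell$ is injective for $\ell$ not a multiple of $x$. Here \eqref{eq:Lx} does the work: if $g=\sum_{i=1}^a c_iF_i$ with $c_i\in\kk$ satisfies $\ell\circ g=0$, then $\sum_i(c_i\ell)\circ F_i=0$, so $(c_1\ell,\dots,c_a\ell)$ lies in $L_{F_1,\dots,F_a}\subseteq x(R_0)^a$; thus each $c_i\ell$ is a scalar multiple of $x$, which forces $c_i=0$ for all $i$ because $\ell\notin\kk x$. Hence $g=0$ and $\gamma_\ell$ is injective. Taking $U=\mathbb P^{n-1}\setminus\{[x]\}$, nonempty since $n\ge 2$, gives the WLP and identifies the non-Weak-Lefschetz locus as a subset of the single point $[x]$, which is the refined assertion.

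I do not anticipate a real obstacle: the definition of $d$-extremely narrow is designed precisely so that the linear-syzygy bound \eqref{eq:Lx} controls the kernel obstructing surjectivity of the one critical multiplication. The only delicate point is the bookkeeping in the reduction and the dualization: confirming, via the narrowness identities for $\HF_A$, that every degree other than $d$ is automatically fine, and that ``maximal rank of the multiplication map'' corresponds under duality to ``injectivity of $\gamma_\ell$'' rather than to the opposite comparison of dimensions. Once these are pinned down, what remains is a two-line computation.
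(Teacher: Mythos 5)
Your proposal is correct and follows essentially the same route as the paper's proof: reduce to the single map $A_{d-1}\xrightarrow{\ell}A_d$, pass by Matlis duality to injectivity of the contraction $F\mapsto\ell\circ F$ on $(I^{-1})_{-d}$, and use \eqref{eq:Lx} to show any element of the kernel would force $\ell$ into $\kk x$. The only cosmetic difference is that you argue the contrapositive (each $c_i\ell\in\kk x$ forces $c_i=0$) where the paper deduces that $\ell$ itself is a multiple of $x$; the content is identical.
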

\begin{proof}
By \cref{prop: narrow} and the definition of a $d$-extremely narrow algebra, we know that $I$ is generated in degree $d$, and $A$ has socle in degree $d$. Thus, 
$A_i=R_i$ for $i<d$ and $A_i=0$ for $i>d$. In particular, the map  $A_{i-1}\xrightarrow{\ell} A_i$ given by multiplication by any linear form $\ell$ is injective for $i<d$. 
To prove the WLP, it suffices to show the map  $A_{d-1}\xrightarrow{\ell} A_d$ given by multiplication by a general linear form $\ell$ is surjective. Since $I^{-1}$ is the Matlis dual of $R/I$, this is equivalent to showing that the map $(I^{-1})_{-d}\xrightarrow{\ell} (I^{-1})_{-d+1}$ given by $F\mapsto \ell\circ F$ for $F\in (I^{-1})_{-d}$ is injective. 

Let $F_1, \dots, F_a$ denote a basis of $U^\perp$ and $x\in R_1$ satisfying property \eqref{eq:Lx}.  Assume 
\[
\sum_{i=1}^nc_ix_i\circ \sum_{j=1}^a a_jF_j=0
\]
for $a_j,c_i\in \kk$, with $a_j\ne 0$ for at least some $j$. We have then 
\[
\sum_{j=1}^a\left(\sum_{i=1}^nc_ix_ia_j \right)\circ F_j=0\,.
\]
It follows that $\left(\sum_{i=1}^nc_ix_ia_j \right)_{1\le j\le a}\in L_{F_1, \dots, F_a}$, and hence $\sum_{i=1}^nc_ix_ia_j$ is a multiple of $x$ for each $j$. Write $x=\sum_{i=1}^nb_ix_i$ with $b_i\in \kk$.  We have thus 
\[
\sum_{i=1}^nc_ix_ia_j=d_j\sum_{i=1}^n b_ix_i
\]
for some $d_j\in \kk$ for all $j\in [a]$. Thus $c_ia_j=b_id_j$ for all $i\in [n]$ and $j\in [a]$. If $j$ is such that $a_j\ne 0$, we have then 
\[
c_i=(a_j)^{-1}d_jb_i \quad\text{ for all $i\in [a]$}
\]
and hence $\sum_{i=1}^nc_ix_i$ is a multiple of $x$. Consequently, the map $(I^{-1})_{-d}\xrightarrow{\ell} (I^{-1})_{-d+1}$ is injective as long as $\ell$ is not a multiple of $x$. 
\end{proof}

\section{Orthogonality in rings of symmetric polynomials}
\label{sec:orthog}
In this section, we keep the notation established in Section 2. We define the rings of invariants $R'$, $S'$, explore choices of bases for the graded components of these rings, and define the map denoted $\perp$ in \cref{eq: CD}. 

\subsection{Partitions} A partition of a positive integer $d$ is a tuple of integers $\l=(\l_1, \ldots, \l_p)$ such that $\l_1\geq \l_2\geq \cdots \geq \l_p>0$ and   $|\l|=\l_1+\cdots+\l_p=d$. We denote the fact that $\l$ is a partition of $d$ by $\l \vdash d$. For a partition $\l$ we write $\#\l$ for the number of parts, not to be confused with $|\l|$, the sum of the parts of $\l$.

If $\l=(\l_1, \ldots, \l_p)\vdash d$ and $p\le n$, we set $x^\l=x_1^{\lambda_1}\cdots x_p^{\lambda_p}$.

If $\l=(\l_1,\dots, \l_p)\vdash d$, we say that a monomial $m$ has {\it type} $\l$ if $m=x_{i_1}^{\l_1}\dots x_{i_p}^{\l_p}$ for some integers $i_1, \dots, i_p$, equivalently, $m=\sigma\cdot x^\l$ for some $\sigma\in \sym_n$. 

We utilize the {\em lexicographic order} on partitions defined as follows: for partitions $\l\vdash d$ and $\mu\vdash d$ we say that $\l>_{\lex}\mu$ if the leftmost nonzero entry of the vector $\l-\mu$ is positive. 

\subsection{Action of the permutation group}
\label{s: action}
We denote by $\sym_n$ the permutation group on $\{1,\dots, n\}$.  
 We consider the action of $\sym_n$ on $R$, respectively $S$, by 
\begin{align*}
\sigma f(x_1, \dots, x_n)&=f(x_{\sigma(1)}, \dots, x_{\sigma(n)}) \qquad\text{for $f\in R$, $\sigma\in \sym_n$}\\
\sigma g(y_1, \dots, y_n)&=g(y_{\sigma(1)}, \dots, y_{\sigma(n)}) \qquad\text{for $g\in S$, $\sigma\in \sym_n$}
\end{align*}
As discussed in \cite[Lemma 3.2, Appendix A]{HSS24}, the action on $S$ is naturally induced from the action on $R$, and it satisfies
\begin{equation}
\label{e: action-RS}
\sigma\cdot (f\circ g)=(\sigma\cdot f)\circ (\sigma\cdot g)
\end{equation}
for $f\in R$, $g\in S$ and $\sigma\in \sym_n$. 
 We denote by $R'$, respectively $S'$, the subring of invariants of $R$, respectively $S'$, under the action of $\sym_n$, namely
\begin{align*}
R'&=\{f\in R\mid \sigma\cdot f=f\quad\text{for all $\sigma\in \sym_n$}\}\\
S'&=\{g\in S\mid \sigma\cdot g=g\quad\text{for all $\sigma\in \sym_n$}\}
\end{align*}

For each $d\ge 0$, we let $P_n(d)$ denote $\dim_{\kk}R'_d$. Observe that $P_n(d)$ is the number of partitions of $d$ with at most $n$ parts. In particular, $P_n(d)$ does not depend on $n$ when $n\ge d$. In this case, we write  $P(d)$ instead of $P_n(d)$. 

When $\text{char}(\kk)>n$ or $\ch\kk=0$, the Reynolds operator is the algebra retract  $\rho\colon R\to R'$ given by 
\[
\rho(f)=\frac{1}{n!} \sum_{\sigma\in \sym_n} \sigma \cdot f \qquad\text{for $f\in R$}.
\]

 With respect to the pairing between $R_d$ and $S_{-d}$ given by $\circ$, the monomials of degree $d$, respectively $-d$, form a pair of canonical dual bases for the two spaces. With respect to the pairing between $R'_d$ and $S'_{-d}$ the choice of dual bases seems less canonical. Given a monomial $\mu$ of type $\l$, we set 
 \begin{eqnarray}
 \label{e: Ml}
 M_\l &=& \rho(\mu)=\frac{1}{n!}\sum_{\sigma\in \sym_n} \sigma\cdot \mu\\
 \label{e: ml}
 m_\l &= &\text{ the sum of all distinct monomials of type }\l. 
 \end{eqnarray}

Let $\l\vdash d$. For each $i\in [d]$ let $p_i$ denote the number of parts of $\l$ of size $i$, and $p_0=n-\#\l$. Then, since the stabilizer under the action of $\sym_n$ of each term in $m_\l$ has size $p_0!p_1!\cdots p_d!$, the symmetric functions above (considered in either $R$ or $S$) are related by 
 \begin{equation}\label{eq: m vs M}
 m_\l=\frac{ n!}{p_0! p_1!\cdots, p_d!} M_\l=\binom{n}{ p_0, p_1,\ldots, p_d} M_\l.
 \end{equation}

Note that $M_\l$ only depends on $\l$ and, in particular, is independent of the choice of $\mu$.
 We intentionally did not specify above the ring in which the monomials are considered, since we will use the same notation when working in both $R$ and $S$, and the appropriate ring will be inferred from the context. For example, in \cref{l:Mm} below one needs to take $M_\l\in R$ and $m_\nu\in S$ in order for the contraction operation to make sense. When working with monomials in $R$,  note that $M_\l,m_\l\in R'$. When working with monomials in $S$, note that $M_\l, m_\l\in S'$.  
 
 \begin{lem}
 \label{l:Mm}
 Assume $\ch\kk>n$ or $\ch \kk=0$. The sets of polynomials $\{m_\l\}_{\l\vdash d}$ and $\{M_\l\}_{\l\vdash d}$ are dual bases for $R'_d$ and $S'_{-d}$ with respect to the pairing $\circ$, that is
 \begin{equation}\label{eq: lem 2.2}
 m_\l\circ M_\nu=
 \begin{cases}
 1 &\text{ if } \mu=\nu\\
 0 &\text{ if } \mu\neq \nu\,.
 \end{cases}
\end{equation}
 \end{lem}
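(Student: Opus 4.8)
The plan is to compute the contraction pairing $m_\l \circ M_\nu$ directly from the definitions, using the canonical duality between monomials in $R$ and monomials in $S$ together with the averaging appearing in the Reynolds operator. First I would recall that for ordinary monomials the contraction pairing satisfies $x^{\ba}\circ y^{(\bb)} = \delta_{\ba,\bb}$ (Kronecker delta on exponent vectors), where $y^{(\bb)} = y_1^{(b_1)}\cdots y_n^{(b_n)}$; this is immediate from the displayed formula defining contraction, since $x_1^{d_1}\cdots x_n^{d_n}\circ y_1^{(e_1)}\cdots y_n^{(e_n)}$ equals the degree-zero element $1$ exactly when $e_i = d_i$ for all $i$, and is $0$ otherwise. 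Thus the set of all degree-$d$ monomials in $R$ and the set of all degree-$(-d)$ monomials in $S$ are dual bases, and the pairing is $\sym_n$-equivariant by \eqref{e: action-RS}.

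Next I would expand both sides. Fix representative monomials $\mu$ of type $\l$ and $\nu$ of type $\nu$ (abusing notation to let $\nu$ denote both the partition and a chosen monomial of that type). By definition $M_\nu = \rho(\nu) = \frac{1}{n!}\sum_{\tau\in\sym_n}\tau\cdot\nu$, while $m_\l = \sum_{\sigma\cdot\mu} \sigma\cdot\mu$ is the sum of the \emph{distinct} monomials of type $\l$. Then
\[
m_\l \circ M_\nu = \frac{1}{n!}\sum_{\text{distinct }\sigma\cdot\mu}\ \sum_{\tau\in\sym_n} (\sigma\cdot\mu)\circ(\tau\cdot\nu).
\]
Each inner term $(\sigma\cdot\mu)\circ(\tau\cdot\nu)$ is $1$ if $\sigma\cdot\mu$ and $\tau\cdot\nu$ have the same exponent vector and $0$ otherwise. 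If $\l \neq \nu$ as partitions, then no monomial of type $\l$ can equal a monomial of type $\nu$, so every term vanishes and $m_\l\circ M_\nu = 0$. If $\l = \nu$, then for each fixed distinct monomial $\sigma\cdot\mu$ of type $\l$, the number of $\tau\in\sym_n$ with $\tau\cdot\mu = \sigma\cdot\mu$ is exactly the order of the stabilizer of $\mu$, which is $p_0!\,p_1!\cdots p_d!$ in the notation of the excerpt (where $p_i$ is the number of parts of $\l$ equal to $i$ and $p_0 = n - \#\l$). Hence the inner sum equals $p_0!\,p_1!\cdots p_d!$ for each of the $\binom{n}{p_0,p_1,\ldots,p_d} = n!/(p_0!\,p_1!\cdots p_d!)$ distinct monomials, and therefore
\[
m_\l\circ M_\l = \frac{1}{n!}\cdot \frac{n!}{p_0!\,p_1!\cdots p_d!}\cdot p_0!\,p_1!\cdots p_d! = 1,
\]
as claimed. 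Alternatively, one can avoid recounting by invoking \eqref{eq: m vs M}: since $m_\l = \binom{n}{p_0,\ldots,p_d}M_\l$ and $\rho$ is $\circ$-compatible, $m_\l\circ M_\nu = \binom{n}{p_0,\ldots,p_d}(M_\l\circ M_\nu)$, and one checks $M_\l\circ M_\nu = \frac{1}{n!}\,m_\l\circ M_\nu$ by equivariance, reducing to the single computation $m_\l\circ M_\l = 1$.

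Finally, I would note that $\{M_\l\}_{\l\vdash d}$ and $\{m_\l\}_{\l\vdash d}$ each have cardinality equal to the number of partitions of $d$ with at most $n$ parts, which is $\dim_\kk S'_{-d} = \dim_\kk R'_d = P_n(d)$; combined with the orthogonality relation just established, this shows each set is linearly independent and hence a basis of the appropriate space, and the two bases are dual with respect to $\circ$. The computation is entirely routine; the only place requiring care is the bookkeeping of the stabilizer order and the multinomial coefficient, and making sure the hypothesis $\ch\kk > n$ or $\ch\kk = 0$ is used precisely where it is needed, namely to ensure that division by $n!$ (and hence the Reynolds operator itself) makes sense. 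I do not anticipate a genuine obstacle here.
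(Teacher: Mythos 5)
Your proof is correct and takes essentially the same route as the paper's: both arguments reduce to counting the distinct monomials of type $\l$ (the multinomial coefficient $\binom{n}{p_0,p_1,\ldots,p_d}$) against the stabilizer order $p_0!\,p_1!\cdots p_d!$ and invoking the relation $m_\l=\binom{n}{p_0,p_1,\ldots,p_d}M_\l$ of \eqref{eq: m vs M}, your double sum over $\sym_n$ being just the expanded form of that count. One minor slip in your optional ``alternatively'' aside: the asserted identity $M_\l\circ M_\nu=\tfrac{1}{n!}\,m_\l\circ M_\nu$ should be $M_\l\circ M_\nu=\binom{n}{p_0,\ldots,p_d}^{-1}m_\l\circ M_\nu$, but this does not affect your main (already complete) computation.
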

 \begin{proof}
Since $m_\l\circ m_\l$ is equal to the number of terms in $m_\l$, which is given by $n!$ divided by the size of the stabilizer of each term in $m_\l$, we have
\[
m_\l\circ m_\l=\binom{n}{ p_0, p_1,\ldots, p_d}.
\]
By \eqref{eq: m vs M} we deduce that $M_\l\circ m_\l=1$. The remaining case is clear from the definition of the operation $\circ$.
 \end{proof}

The following combinatorial lemma is crucial in the proof of \Cref{prop: dimension linear relations}.

\begin{lem}\label{lem: c_mu_nu}
Let $\mu\vdash (d-1)$. Then
\[
m_\mu \left(\sum_{i=1}^n x_i \right)=\sum_{\l\vdash d} c^\l_\mu m_\l
\]
with $c_\mu^\l\in \kk$ and $c^\l_\mu\neq 0$ if and only if $\l$ can be obtained by adding a box to $\mu$ and re-ordering. When $n\ge d$, the constants $c^\l_\mu$  only depend on $\l$ and $\mu$ and not on $n$.
\end{lem}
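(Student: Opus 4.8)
The plan is to expand the product $m_\mu\cdot\bigl(\sum_{i=1}^n x_i\bigr)$ directly, monomial by monomial. Writing $m_\mu=\sum_\nu \nu$ with $\nu$ ranging over the distinct monomials of type $\mu$, one gets $m_\mu\cdot\sum_{i=1}^n x_i=\sum_{\nu}\sum_{i=1}^n \nu x_i$, a nonnegative integer combination of degree-$d$ monomials. I would first observe that the exponent vector of each monomial $\nu x_i$, sorted weakly decreasingly with zeros deleted, is a partition of $d$ obtained from $\mu$ by increasing one entry (allowing a new part equal to $1$) — that is, obtained from $\mu$ by adding a box and re-ordering. Since $m_\mu$ and $\sum_i x_i$ lie in $R'$, the product lies in $R'_d$, so its coefficients are constant on $\sym_n$-orbits of monomials; hence it can be written as $\sum_{\l\vdash d}c^\l_\mu\,m_\l$ with $c^\l_\mu\in\N$, supported on the partitions just described, and the $c^\l_\mu$ are uniquely determined because the $m_\l$ with $\l\vdash d$ are linearly independent once $n\ge d$.

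The main step is then to pin down $c^\l_\mu$ by comparing, for a fixed $\l=(\l_1\ge\cdots\ge\l_p)\vdash d$, the coefficient of the monomial $x^\l=x_1^{\l_1}\cdots x_p^{\l_p}$ on both sides (legitimate since $n\ge d\ge p$). On the right it is $c^\l_\mu$, because $x^\l$ occurs in $m_\l$ with coefficient $1$ and in no other $m_\nu$ with $\nu\vdash d$. On the left it is the number of pairs $(\nu,i)$ with $\nu$ of type $\mu$ and $\nu x_i=x^\l$; such a pair forces $i\in[p]$ and $\nu=x^\l/x_i$, and $\nu$ has type $\mu$ exactly when $(\l_1,\dots,\l_i-1,\dots,\l_p)$, re-sorted with zeros removed, equals $\mu$. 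So $c^\l_\mu$ equals the number of $i\in[p]$ for which removing one box from the $i$-th part of $\l$ yields $\mu$; in particular $c^\l_\mu>0$ precisely when $\mu$ is obtained from $\l$ by deleting a box, equivalently $\l$ is obtained from $\mu$ by adding a box and re-ordering. This yields the asserted equivalence and, simultaneously, a closed combinatorial formula for $c^\l_\mu$ that mentions only the parts of $\l$ and $\mu$. Because that formula needs only $n\ge\#\l$ — hence holds for every $n\ge d$ — and because for $n\ge d$ the $c^\l_\mu$ are unambiguously defined, the last assertion drops out.

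I do not expect a real obstacle here: the only delicate point is the bookkeeping matching the set of preimages $(\nu,i)$ of $x^\l$ to the ``add-a-box-and-reorder'' description of partitions covering $\mu$, together with the observation that $n\ge d$ is exactly what guarantees both that $x^\l$ is a genuine monomial of $R$ for every $\l\vdash d$ and that $\{m_\l\}_{\l\vdash d}$ is linearly independent, so that the constants $c^\l_\mu$ are well defined and independent of $n$.
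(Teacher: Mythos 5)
Your proposal is correct and follows essentially the same route as the paper: both identify $c^\l_\mu$ as the number of pairs (monomial of type $\mu$, variable $x_i$) whose product is $x^\l$, and both observe that since $x^\l$ involves at most the first $d$ variables this count is determined by $\l$ and $\mu$ alone once $n\ge d$. Your write-up is somewhat more detailed in justifying why comparing coefficients of $x^\l$ is legitimate and in extracting the closed formula (the number of parts of $\l$ from which removing a box yields $\mu$), but the underlying argument is the same.
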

\begin{proof}
Indeed, the integer $c^\l_\mu$ counts the number of pairs $(\sigma(x^{\mu}),x_i)$ with $\sigma\in \sym_n$ and $i\in [n]$ so that $x^\l=\sigma(x^{\mu})\cdot x_i$. Since the monomial $x^\l$ only involves at most the first $d$ variables so do $\sigma(x^{\mu})$ and $x_i$. Hence we may assume $\sigma\in \sym_d$ and thus the number $c^\l_\mu$ does not depend on $n$. The last statement follows from the description of $c^\l_\mu$ above. \end{proof}

\subsection{Orthogonal subspaces}
 \cref{eq: CD} uses a map $\perp\colon \Gr(r,R'_d)\to \Gr(P(d)-r, S'_{-d})$, which we now define. 

\begin{notation}\label{not: U perp}
 Let $d\ge 1$, $r\ge 0$ and  let $U\in \Gr(r,R'_d) $. 
 We define a subspace of $S'_{-d}$ associated to $U$ by 
 \[
 U^\perp= \left\{w\in S'_{-d}: u\circ w=0 \text{ for all }u\in U \right \}.
 \]
 Note that $\dim_\kk U^\perp =P(d)-r$, and thus $U^\perp\in \Gr(P(d)-r,S'_{-d})$.

 Conversely, if $W\in \Gr(P(d)-r, S'_{-d})$, we define 
 \[
 W^\perp=\{u\in R'_d\colon u\circ w=0 \text { for all $w\in W$}\}. 
 \]
 Note that $\dim_\kk W^\perp=r$, and thus $W^\perp\in \Gr(r,R'_d)$. Also, we have $(U^\perp)^\perp=U$ and $(W^\perp)^\perp=W$. 
 \end{notation}

This leads to the following double annihilator lemma which shows that taking $R$-annihilators of subspaces of $S'$ corresponds to taking preimages under the Reynolds operator $\rho$.

 \begin{lem} \label{lem: perp map}
Assume $\ch\kk>n$ or $\ch\kk=0$. Using  \cref{not: U perp} and viewing $U^\perp$ as a subspace of $S_{-d}$, we have 
\[
[\Ann_R(U^\perp)]_d=\{f\in R_d\colon \rho(f)\in U\}.
\]
\end{lem}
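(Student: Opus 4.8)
The plan is to establish the two inclusions separately, both of which hinge on the interplay between the contraction pairing $\circ$, the Reynolds operator $\rho$, and the invariance relation \eqref{e: action-RS}. Throughout I will use that $\rho$ is $\kk$-linear and degree-preserving, that $\rho|_{R'} = \mathrm{id}$, and — crucially — the \emph{adjunction identity} between $\circ$ and $\rho$: for $f \in R_d$ and $w \in S'_{-d}$ one has $f \circ w = \rho(f) \circ w$. This identity is the technical heart of the argument and follows from unwinding the definition of $\rho$ together with \eqref{e: action-RS}: since $w$ is $\sym_n$-invariant, $(\sigma \cdot f)\circ w = (\sigma \cdot f)\circ(\sigma \cdot w) = \sigma\cdot(f \circ w)$, and because $f \circ w \in S_0 = \kk$ is a scalar (fixed by every $\sigma$), averaging over $\sym_n$ gives $\rho(f)\circ w = \frac{1}{n!}\sum_\sigma (\sigma\cdot f)\circ w = \frac{1}{n!}\sum_\sigma (f\circ w) = f \circ w$.

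\textbf{Key steps.} First I would record the adjunction identity above as the main lemma of the proof. Next, for the inclusion ``$\supseteq$'': suppose $f \in R_d$ with $\rho(f) \in U$. For any $w \in U^\perp \subseteq S'_{-d}$, the adjunction identity gives $f \circ w = \rho(f) \circ w = 0$ since $\rho(f) \in U = (U^\perp)^\perp$ by \cref{not: U perp}. Hence $f$ annihilates $U^\perp$ in degree $d$, i.e. $f \in [\Ann_R(U^\perp)]_d$. (Here I should note that $[\Ann_R(U^\perp)]_d$ is precisely the set of degree-$d$ forms killing $U^\perp$, since $U^\perp$ lives in a single degree $-d$, so lower-degree pieces of $\Ann_R$ cannot contribute and higher ones are automatically handled — this is the standard description of $\Ann_R$ of a one-degree module.) For the reverse inclusion ``$\subseteq$'': suppose $f \in [\Ann_R(U^\perp)]_d$, so $f \circ w = 0$ for all $w \in U^\perp$. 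By the adjunction identity, $\rho(f) \circ w = f \circ w = 0$ for all $w \in U^\perp$, so $\rho(f) \in (U^\perp)^\perp = U$, again by \cref{not: U perp}. This closes the argument.

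\textbf{Main obstacle.} The only genuinely delicate point is the adjunction identity $f \circ w = \rho(f) \circ w$ for $w$ invariant, and in particular making sure it is stated at the right level of generality: it holds for $f \in R$ and $w \in S'$ of matching degree because $f \circ w$ then lands in $S_0 \cong \kk$ and is therefore automatically $\sym_n$-fixed; in other degrees one would instead only get $\rho(f)\circ w = \rho(f \circ w)$, which is weaker but not needed here. I expect the verification of \eqref{e: action-RS} being applicable — i.e. that the action on $S$ is the one compatible with contraction, which the excerpt records — to be routine given \cite[Lemma 3.2]{HSS24}. A secondary bookkeeping point is confirming that taking $R$-annihilators of a module concentrated in the single degree $-d$ only imposes conditions in degree $d$ (forms of degree $> d$ kill $S_{-d}$ trivially and forms of degree $< d$ map $S_{-d}$ into $S_{<-d} \cap \text{(rest of module)} = 0$ since $U^\perp \subseteq S'_{-d}$ only, so no constraint there), which justifies reading off $[\Ann_R(U^\perp)]_d$ as ``forms of degree $d$ killing $U^\perp$'' with no hidden conditions — this is immediate but worth a clause.
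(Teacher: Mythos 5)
Your proof is correct and follows essentially the same route as the paper: both reduce the lemma to the identity $f\circ w=\rho(f)\circ w$ for $f\in R_d$ and $w\in S'_{-d}$, and then conclude via the chain of equivalences ending in $\rho(f)\in (U^\perp)^\perp=U$. The only difference is that you derive this identity from the equivariance \eqref{e: action-RS} together with the triviality of the $\sym_n$-action on $S_0\cong\kk$, whereas the paper verifies it by direct computation in the dual bases $\{m_\l\}$, $\{M_\l\}$ of \cref{l:Mm}; both arguments are valid.
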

\begin{proof}
This follows from the observation that if $w\in (S')_{-d}$ then $f\circ w=\rho(f)\circ w$. To see this note that $w=\sum_{\l\vdash d} \theta_\l m_\l$ for some $\theta_\l\in \kk$ since $\{m_\l\}_{\l\vdash d}$ is a basis for $S'_d$ and if $f=\sum_i c_i \mu_i$ with $\mu_i$ monomial of type $\nu_i$, then $\rho(f)=\sum_i c_i M_{\nu_i}$. Now we compare
\begin{eqnarray*}
f\circ w &=& \sum_{i, \l} c_i \theta_\l \mu_i\circ m_\l= \sum_{\nu_i= \l}c_i \theta_\l\\
\rho(f)\circ w &=& \sum_{i, \l} c_i \theta_\l M_{\nu_i}\circ m_\l= \sum_{\nu_i= \l}c_i \theta_\l
\end{eqnarray*}
where we used \eqref{eq: lem 2.2} in the last equality. Therefore we have 
\begin{align*}
f\in [\Ann_R(U^\perp)]_d &\iff  f\circ w=0 \text{ for all } w \in U^\perp\\
&\iff   \rho(f)\circ w=0 \text{ for all } w \in U^\perp\\
&\iff   \rho(f) \in(U^\perp)^\perp =U. \qedhere
\end{align*}
\end{proof}

\section{Linear syzygies}
\label{sec:linear}
Let $n$ and $d$ be positive integers. In this section, we work with the rings $R$, $S$, $R'$ and $S'$ defined in \cref{sec: inverse} and \cref{sec:orthog} and we explore properties of the space of linear syzygies of a subspace of $S_{-d}$. 

\begin{notation}
Let $F_1, \ldots, F_a \in S_{-d}$ be linearly independent forms.  
Recall that the space of linear relations on $F_1, \ldots, F_a$ is 
\[
L_{F_1, \dots, F_a}=\left\{  \left(\sum_{i=1}^nc_{ij}x_i\right)_{1\le j\le a}\!\!\in {R_1}^{a}\colon\, \sum_{j=1}^a \left(\sum_{i=1}^nc_{ij} x_i\right) \circ F_j= 0 \right\}\,.
\]

Consider the following property:
\[
(\mathcal P_{F_1, \dots, F_a}):\,\,\text{If\, $\left(\sum_{i=1}^nc_{ij}x_i\right)_{\!\!1\le j\le a}\!\!\!\!\!\in L_{F_1, \dots, F_a}$, then $c_{ij}=c_{i'j}$ for all $i,i'\in [n]$ and $j\in [a]$.}
\]
\begin{rem}
\label{r: x}
Set $x=x_1+\dots+x_n$. Observe then that the property $(\mathcal P_{F_1, \dots, F_a})$ holds if and only if  
 $L_{F_1,\dots, F_a}\subseteq x(R_0)^a$. Thus, the property $(\mathcal P_{F_1, \dots, F_a})$ establishes \eqref{eq:Lx} in the definition of a $d$-extremely narrow algebra, and it will be used in \cref{sec:narrow-inv} in proving that the algebras of interest are $d$-extremely narrow.  
\end{rem}
\end{notation}

\begin{lem}\label{lem: PW subset}
Let $F_1, \dots, F_b\in S_{-d}$ linearly independent and $G_1, \dots, G_a\in S_{-d}$ linearly independent such that $\Span_\kk( F_1, \dots, F_b)\subseteq \Span_\kk(G_1, \dots, G_a)$. 
If $(\mathcal P_{G_1, \dots, G_a})$ holds, then $(\mathcal P_{F_1, \dots, F_b})$ also holds. 
\end{lem}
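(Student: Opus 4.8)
The statement is a monotonicity property: enlarging the spanning set of forms (from a subspace to an overspace) preserves property $(\mathcal P)$, equivalently preserves the containment $L\subseteq x(R_0)^a$ of the linear syzygy module inside the diagonal line. The natural approach is to relate $L_{F_1,\dots,F_b}$ to $L_{G_1,\dots,G_a}$ by a change-of-basis argument. Since $\Span_\kk(F_1,\dots,F_b)\subseteq \Span_\kk(G_1,\dots,G_a)$, we may write $F_j=\sum_{k=1}^a \gamma_{jk}G_k$ for scalars $\gamma_{jk}\in\kk$, where the $b\times a$ matrix $\Gamma=(\gamma_{jk})$ has rank $b$. The key observation is then: given a linear syzygy $\left(\sum_i c_{ij}x_i\right)_{1\le j\le b}$ on the $F_j$, substituting produces
\[
\sum_{j=1}^b\left(\sum_{i=1}^n c_{ij}x_i\right)\circ\Big(\sum_{k=1}^a\gamma_{jk}G_k\Big)=\sum_{k=1}^a\left(\sum_{i=1}^n\Big(\sum_{j=1}^b\gamma_{jk}c_{ij}\Big)x_i\right)\circ G_k=0,
\]
so the tuple $\left(\sum_i (\sum_j \gamma_{jk}c_{ij})\,x_i\right)_{1\le k\le a}$ lies in $L_{G_1,\dots,G_a}$. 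Using a linear map $\Gamma^{\mathsf T}\colon R_1^b\to R_1^a$ applied coordinatewise, this says $\Gamma^{\mathsf T}(L_{F_1,\dots,F_b})\subseteq L_{G_1,\dots,G_a}$.

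Now invoke the hypothesis $(\mathcal P_{G_1,\dots,G_a})$, i.e.\ $L_{G_1,\dots,G_a}\subseteq x(R_0)^a$ with $x=x_1+\cdots+x_n$. For a tuple $\left(\sum_i c_{ij}x_i\right)_j\in L_{F_1,\dots,F_b}$, its image under $\Gamma^{\mathsf T}$ has $k$-th entry $\sum_i\left(\sum_j\gamma_{jk}c_{ij}\right)x_i$, and this being a scalar multiple of $x$ forces $\sum_j\gamma_{jk}c_{ij}$ to be independent of $i$ for every $k$. Fixing an index pair $i,i'$ and setting $\delta_i:=c_{ij}-c_{i'j}$ as a vector in $\kk^b$ (for each fixed $i,i'$), these conditions read $\Gamma^{\mathsf T}\delta=0$, i.e.\ $\sum_j\gamma_{jk}\delta_j=0$ for all $k$. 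Since $\Gamma$ has rank $b$, the map $\Gamma^{\mathsf T}\colon\kk^b\to\kk^a$ is injective, hence $\delta=0$, which is exactly the conclusion $c_{ij}=c_{i'j}$ for all $i,i'$ and all $j$. This establishes $(\mathcal P_{F_1,\dots,F_b})$.

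**Main obstacle.** The argument is essentially bookkeeping; the one point requiring care is that $\Gamma^{\mathsf T}$ is injective on $\kk^b$, which rests on the linear independence of $F_1,\dots,F_b$ (so $\Gamma$ has full row rank $b$). One should also double-check the direction of the inclusion $\Gamma^{\mathsf T}(L_{F_1,\dots,F_b})\subseteq L_{G_1,\dots,G_a}$ — it goes the way we need because the $F$'s are expressed in terms of the $G$'s, not vice versa — and be slightly careful that the $G_k$ being linearly independent is not actually needed for that inclusion (it is harmless), only the independence of the $F_j$ matters for the final injectivity step. A fully streamlined writeup could instead phrase everything directly via \Cref{r: x} and avoid matrix notation, but the change-of-basis substitution is the conceptual core either way.
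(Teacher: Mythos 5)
Your proposal is correct and follows essentially the same route as the paper: express each $F_j$ in terms of the $G_k$'s, substitute into a syzygy on the $F_j$'s to produce a syzygy on the $G_k$'s, apply $(\mathcal P_{G_1,\dots,G_a})$, and use the linear independence of the coefficient vectors (your rank-$b$ matrix $\Gamma$, the paper's vectors $\theta^j$) to conclude $c_{ij}=c_{i'j}$. The only blemish is the notational slip ``$\delta_i:=c_{ij}-c_{i'j}$'' where you mean $\delta_j$, which does not affect the argument.
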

\begin{proof}
Assume $(\mathcal P_{G_1, \dots, G_a})$. For each $j\in [b]$ we write 
\[
F_j=\sum_{\ell =1}^a\theta_\ell^j G_\ell
\]
where the vectors $\theta^j := (\theta^j_\ell)_{\ell\in [a]}\in \kk^a$ with $j\in [b]$ are linearly independent. Then 
\[
\sum_{j=1}^b\left(\sum_{i=1}^n{c_{ij}x_i}\right) \circ \sum _{\ell=1}^a\theta^j_\ell G_\ell=\sum _{\ell=1}^a\left(\sum_{i=1}^n\Big(\sum_{j=1}^bc_{ij}\theta_\ell^j\Big)x_i\right)\circ G_\ell
\]
Assume the expression above equals $0$. Then condition $(\mathcal P_{G_1, \dots, G_a})$ gives  \[
\sum_{j=1}^bc_{ij}\theta_\ell^j=\sum_{j=1}^bc_{i'j}\theta_\ell^j\quad\text{for all}\quad i,i'\in [n], \ell\in [a]\,.
\]
Since the vectors $\theta^j$ are linearly independent, it must follow $c_{ij}=c_{i'j}$ for all $i,i'\in [n]$, $j\in [b]$, showing that $(\mathcal P_{F_1, \dots, F_b})$ holds. 
\end{proof}

\begin{rem}
In view of \cref{lem: PW subset}, we see that, if $W$ is a vector subspace of $S_{-d}$ with basis $F_1, \dots, F_b$, then the property $(\mathcal P_{F_1, \dots, F_b})$ is independent of the choice of the basis. In view of this observation, we will write $\mathcal P(W)$ instead of $(\mathcal P_{F_1, \dots, F_b})$. \cref{lem: PW subset} can thus be reformulated as follows: If $W'$ is a subspace of $W$ and $\mathcal P(W)$ holds, then $\mathcal P(W')$ also holds.
\end{rem}

\begin{notation}
Given a subspace $W\subseteq S_{-d}$ we  define
\[
\widetilde{L}_W\coloneqq\left\{ F\in W\colon \sum_{i=1}^n x_i\circ F=0\right\}\,.
\]
\end{notation}

Property $\mathcal P(W)$ states that the linear syzygies of a vector space $W$ are invariant under the action of the symmetric group. We now turn our attention to the case when the subspace $W$ consists of invariant elements under the action of the symmetric group and give sufficient conditions for $\mathcal P(W)$ to hold.

\begin{lem}
\label{lem: 2L}
If $W$ is a subspace of $S'_{-d}$, then 
\begin{equation}
\label{eq: L-inequalities}
\dim_\kk L_W\ge  \dim_\kk \widetilde{L}_W\ge \max\{\dim_\kk W-P_n(d-1),0\}\,.
\end{equation}
and the following statements are equivalent:
\begin{enumerate}
\item The equality $\dim_\kk L_W =  \dim_\kk \widetilde{L}_W$ holds.
\item  $\mathcal P(W)$ holds.
\end{enumerate}
\end{lem}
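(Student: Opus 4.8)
The plan is to produce two explicit linear maps that make the chain \eqref{eq: L-inequalities} transparent and, at the same time, detect when its left-hand inequality is an equality. Fix $x = x_1 + \cdots + x_n \in R_1$, set $a = \dim_\kk W$, and fix a basis $F_1, \dots, F_a$ of $W$ (harmless, since both $\dim_\kk L_W$ and the property $\mathcal P(W)$ are basis-independent). First I would consider the contraction-by-$x$ map $\phi \colon W \to S'_{-d+1}$, $\phi(F) = x \circ F$. Using $\sigma \cdot x = x$ for $\sigma \in \sym_n$ together with the compatibility \eqref{e: action-RS}, one checks that $\phi$ genuinely lands in the invariant ring $S'$; and since $x \circ F = \sum_{i=1}^n x_i \circ F$, its kernel is exactly $\widetilde{L}_W$. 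The rank--nullity theorem then yields $\dim_\kk \widetilde{L}_W = \dim_\kk W - \dim_\kk \phi(W) \ge \dim_\kk W - \dim_\kk S'_{-d+1} = \dim_\kk W - P_n(d-1)$, while $\dim_\kk \widetilde{L}_W \ge 0$ is automatic, giving the right-hand inequality of \eqref{eq: L-inequalities}.

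Next I would build the ``diagonal syzygy'' map $\iota \colon \widetilde{L}_W \to L_W$ that sends $F = \sum_j a_j F_j$ to the tuple $(a_j x)_{1 \le j \le a}$. The two points to verify are that $\iota$ really lands in $L_W$ --- which holds because $\sum_j (a_j x) \circ F_j = x \circ F = 0$ whenever $F \in \widetilde{L}_W$ --- and that $\iota$ is injective, which follows since $R$ is a domain and $x \neq 0$. This gives $\dim_\kk L_W \ge \dim_\kk \widetilde{L}_W$ and completes \eqref{eq: L-inequalities}.

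For the equivalence of (1) and (2), the plan is to pin down the image of $\iota$ exactly, namely $\iota(\widetilde{L}_W) = L_W \cap x(R_0)^a$. One inclusion is immediate from the formula defining $\iota$; for the reverse, a diagonal syzygy $(c_j x)_j \in L_W$ forces $x \circ \big(\sum_j c_j F_j\big) = 0$, so $\sum_j c_j F_j$ lies in $\widetilde{L}_W$ and maps onto it. Combining this identification with the injectivity of $\iota$, the equality $\dim_\kk L_W = \dim_\kk \widetilde{L}_W$ becomes equivalent to $L_W = L_W \cap x(R_0)^a$, i.e.\ to $L_W \subseteq x(R_0)^a$, which is exactly $\mathcal P(W)$ by \cref{r: x}. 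The only step that needs genuine care --- everything else being routine linear algebra --- is this identification $\iota(\widetilde{L}_W) = L_W \cap x(R_0)^a$ together with the preliminary check that $\phi$ preserves invariance; I do not anticipate a serious obstacle beyond that.
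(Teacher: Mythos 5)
Your proposal is correct and follows essentially the same route as the paper: the paper's map $\eta$ is your $\iota$, its exact sequence $0\to\widetilde{L}_W\to W\to W'\to 0$ is your rank--nullity argument for $\phi$, and the equivalence of (1) and (2) is in both cases the observation that surjectivity of this map onto $L_W$ is exactly the condition $L_W\subseteq x(R_0)^a$, i.e.\ $\mathcal P(W)$. Your identification of the image as $L_W\cap x(R_0)^a$ just makes explicit a step the paper states more tersely.
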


\begin{proof}
Let $F_1, \dots, F_b$ denote a basis for $W$. Here, we use $L_W=L_{F_1, \dots, F_b}$, noting that $\dim_\kk L_W$ does not depend on the choice of basis. 

We define a map $\eta\colon \widetilde{L}_W\to L_W$
as follows. If $F\in \widetilde{L}_W$, write $F=\sum_{j=1}^b a_jF_j$ and set
\[
\eta(F)=\left(a_j\sum_{i=1}^b x_i\right)_{1\le j\le b}
\]
The equation 
$\sum_{i=1}^n x_i\circ F=0$ from the definition of  $\widetilde{L}_W$ can then be written as 
\[
\sum_{j=1}^b\left(a_j\sum_{i=1}^n x_i\right)\circ F_j=0\,.
\]
and hence $\eta(F)\in L_W$. 
The vector space map $\eta$ is clearly injective and hence 
\begin{equation}
\label{e:Ls}
\dim_\kk \widetilde{L}_W\le \dim_\kk L_W\,,
\end{equation}
with equality if and only if $\eta$ is surjective. Further, note that $\eta$ is surjective if and only if the property $\mathcal P(W)$ holds. This establishes the equivalence (1) $\iff$ (2).

Let $W'$ denote the image of the $\kk$-vector space map $W\to S'_{-(d-1)}$ described by $F\mapsto \sum_{i=1}^n x_i\circ F$. We have an exact sequence 
\[
0\to \widetilde{L}_W\to W\to W'\to 0
\]
 which implies
\[
\dim_\kk \widetilde{L}_W=  \dim_\kk W-\dim_\kk W'\ge \dim_{\kk}W-P_n(d-1)\,.
\]
Using \eqref{e:Ls}, we have then the inequalities in  \eqref{eq: L-inequalities}.  
\end{proof}

Next we give an example of a family of vector spaces $W$ for which $\mathcal P(W)$ holds. To do so, assume $n\ge 2$ and define a property $\mathcal Q(W)$ of a subspace $W\subseteq S'_{-d}$ as follows: 
\[
\mathcal Q(W): \qquad \text{For all $F\in W$, \, $(x_1-x_2)\circ F=0\implies F=0$.}
\]
\begin{notation}
\label{not: delta}
When $n\ge d$, let $\delta$ denote the map that sends $F\in S'_{-d}$ to the polynomial $\delta(F)$ in the variables $y_1, \dots, y_d$  obtained by setting the variables $y_{n}, y_{n-1}, \dots, y_{d+1}$ to $0$.  Note that $\delta$ induces an isomorphism between $S'_{-d}$ and the $(-d)$ component of the invariant ring of $\kk[y_1, \dots, y_d]$, which admits two equivalent descriptions in view of \eqref{eq: m vs M}
\begin{itemize}
\item $\delta$ sends the basis $(m_\l)_{\l\vdash d}$ in $n$ variables to the basis $(m_\l)_{\l\vdash d}$ in $d$ variables,
\item $\delta$ sends the basis $(M_\l)_{\l\vdash d}$ in $n$ variables to $\binom{n}{d}^{-1}(M_\l)_{\l\vdash d}$ in $d$ variables.
\end{itemize}
\end{notation}

Part (3) of \Cref{lem: W0} below generalizes \cite[Proposition 7.7]{HSS24}.

\begin{lem}\label{lem: W0}
Let $W$ be a subspace of $S'_{-d}$. The following then hold: 
\begin{enumerate}
\item If $n\ge d\ge 2$, then $\mathcal Q(\delta(W))$implies $\mathcal Q(W)$; 
\item $\mathcal Q(W)$ implies $\mathcal P(W)$;
\item Assume $\ch \kk>n$ or $\ch \kk=0$. If $\Lambda$  is a set of partitions of $d$ so that $(d)\not \in \Lambda$ and 
$
W_\Lambda=\Span_\kk( M_\l : \l\in \Lambda)
$
then $\mathcal Q(W_\Lambda)$ holds, and hence $\mathcal P(W_\Lambda)$ holds. 
\end{enumerate}
\end{lem}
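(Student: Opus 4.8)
\emph{Overview.} I would prove the three parts in the order (2), (1), (3): the first two are short manipulations with the $\sym_n$-action and with the specialization $\delta$, while (3) is where the real work lies.

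\emph{Part~(2).} The plan is to exploit the $\sym_n$-invariance of $W\subseteq S'_{-d}$ together with the compatibility \eqref{e: action-RS} of the group action with contraction. Fix a basis $F_1,\dots,F_b$ of $W$ and assume $\big(\sum_i c_{ij}x_i\big)_{1\le j\le b}\in L_W$, i.e.\ $\sum_j\big(\sum_i c_{ij}x_i\big)\circ F_j=0$. Given $i\ne i'$, apply the transposition $\tau=(i\ i')$ to this relation; using $\tau\cdot F_j=F_j$ together with $\tau\cdot\big(\sum_k c_{kj}x_k\big)=\sum_k c_{\tau(k)j}x_k$ and subtracting the original relation, all terms except those attached to $x_i$ and $x_{i'}$ cancel and one is left with $(x_i-x_{i'})\circ G=0$ for $G:=\sum_j(c_{ij}-c_{i'j})F_j\in W$. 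Transporting this equation along a permutation sending $\{i,i'\}$ to $\{1,2\}$ (again via \eqref{e: action-RS} and the invariance of $W$) and invoking $\mathcal Q(W)$ forces $G=0$, hence $c_{ij}=c_{i'j}$ for all $j$ by linear independence of the $F_j$; since $i,i'$ were arbitrary, this is exactly $\mathcal P(W)$.

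\emph{Part~(1).} The key observation is that the specialization $\bar\delta\colon S\to\kk[y_1,\dots,y_d]_{DP}$ setting $y_{d+1},\dots,y_n$ to $0$ (of which $\delta$ is the restriction to $S'_{-d}$) commutes with contraction by $x_1$ and $x_2$: since $d\ge 2$, checking on a monomial $y^{(e)}$ shows $\bar\delta(x_i\circ y^{(e)})=x_i\circ\bar\delta(y^{(e)})$ for $i\in\{1,2\}$, because the unit vector subtracted by contraction does not meet the variables being killed. Hence $\bar\delta\big((x_1-x_2)\circ F\big)=(x_1-x_2)\circ\delta(F)$ for every $F\in S'_{-d}$. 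So if $(x_1-x_2)\circ F=0$ then $(x_1-x_2)\circ\delta(F)=0$, and $\mathcal Q(\delta(W))$ forces $\delta(F)=0$; since $\delta$ is injective on $S'_{-d}$ (it carries $(m_\l)_{\l\vdash d}$ bijectively onto the analogous basis in $d$ variables by \cref{not: delta}, using $n\ge d$), we conclude $F=0$, i.e.\ $\mathcal Q(W)$.

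\emph{Part~(3).} This is the substantive case. By \eqref{eq: m vs M} and the hypothesis $\ch\kk=0$ or $\ch\kk>n$, each $M_\l$ is a nonzero scalar multiple of $m_\l$, so $W_\Lambda=\Span_\kk\{m_\l:\l\in\Lambda\}$ with $\{m_\l:\l\in\Lambda\}$ a basis (each $\l\in\Lambda$ has at most $n$ parts, so $m_\l\ne 0$). I would take a nonzero $F=\sum_{\l\in\Lambda}b_\l m_\l$ with $(x_1-x_2)\circ F=0$ and seek a contradiction. Let $\l^*=(\l_1\ge\dots\ge\l_p)$ be the lexicographically largest partition with $b_{\l^*}\ne 0$; since $(d)\notin\Lambda$, necessarily $p\ge 2$. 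Expanding in the monomial basis of $S_{-(d-1)}$ and using the definition of contraction, the coefficient of $y^{(f)}$ in $(x_1-x_2)\circ m_\nu$ equals $[\sort(f+\epsilon_1)=\nu]-[\sort(f+\epsilon_2)=\nu]$, where $\epsilon_i$ is the $i$-th unit exponent vector. The plan is to choose an exponent vector $f$ of degree $d-1$ such that increasing its first coordinate by one and sorting yields $\l^*$, while increasing its second coordinate by one and sorting yields a partition $\mu$ with $\mu>_{\lex}\l^*$; explicitly, take $f=(\l_p-1,\ \l_1,\ \l_2,\ \dots,\ \l_{p-1},\ 0,\dots,0)$, which has $|f|=d-1$ and is a legitimate monomial since $\l_p\ge 1$ and $p\le n$. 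Indeed $\sort(f+\epsilon_1)=\sort(\l_p,\l_1,\dots,\l_{p-1})=\l^*$, whereas $f+\epsilon_2$ has strictly largest entry $\l_1+1>\l_1$, so $\mu:=\sort(f+\epsilon_2)$ satisfies $\mu>_{\lex}\l^*$. Therefore the coefficient of $y^{(f)}$ in $(x_1-x_2)\circ m_\nu$ is $[\nu=\l^*]-[\nu=\mu]$, and hence the coefficient of $y^{(f)}$ in $(x_1-x_2)\circ F$ is $b_{\l^*}-b_\mu=b_{\l^*}\ne 0$ (here $b_\mu=0$ by lexicographic maximality of $\l^*$), contradicting $(x_1-x_2)\circ F=0$. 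This proves $\mathcal Q(W_\Lambda)$, and $\mathcal P(W_\Lambda)$ then follows from Part~(2). I expect the construction of the exponent vector $f$ — arranged so that a bump in position $1$ recovers $\l^*$ but a bump in position $2$ strictly increases the leading part — to be the only genuinely delicate step; everything else is bookkeeping with the contraction pairing and with the $\sym_n$-action.
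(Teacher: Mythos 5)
Your proposal is correct. Parts (1) and (2) follow the paper's own route: for (2) the paper likewise acts by a transposition on a linear syzygy, subtracts, and invokes $\mathcal Q(W)$ on the resulting element $\sum_\ell(c_{i\ell}-c_{j\ell})F_\ell$; for (1) the paper writes $F=\delta(F)+G$ with the monomials of $G$ divisible by some $y_i$, $i>d$, and observes that the two pieces of $(x_1-x_2)\circ F$ cannot cancel --- which is just another phrasing of your observation that the specialization commutes with contraction by $x_1,x_2$. The genuine divergence is in part (3): the paper disposes of it in one line by noting that no $F\in W_\Lambda$ contains a pure power $y_k^{(d)}$ in its support and citing \cite[Lemma 7.6]{HSS24} as a black box, whereas you give a self-contained combinatorial proof by extracting the coefficient of $y^{(f)}$ for the cleverly chosen exponent vector $f=(\l_p-1,\l_1,\dots,\l_{p-1},0,\dots,0)$, so that bumping position $1$ recovers the lex-largest partition $\l^*$ in the support while bumping position $2$ produces a lex-larger partition, forcing $b_{\l^*}=0$. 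Your argument is checked and correct (the hypothesis $(d)\notin\Lambda$ enters exactly where you say, to guarantee $p\ge 2$ so that $f+\epsilon_2$ has leading entry $\l_1+1$), and it buys independence from the earlier paper at the cost of applying only to symmetric $F$ (linear combinations of the $m_\l$), which is all that is needed here; the cited lemma is stronger in that it treats arbitrary $F$ with no pure $d$-th power in its support.
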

\begin{proof}
(1) Assume $n\ge d$ and  $\mathcal Q(\delta(W))$ holds. Let $F\in W$ such that $(x_1-x_2)\circ F=0$. We can write $F=\delta(F)+G$, where $G\in S_{-d}$ is such that each monomial in $G$ is divisible by some $y_i$ with $d<i\le n$. 
Then 
\[
0=(x_1-x_2)\circ F=(x_1-x_2)\circ \delta(F)+(x_1-x_2)\circ G\,.
\]
In this sum, $(x_1-x_2)\circ \delta(F)$ is a poynomial in variables $y_1, \dots, y_d$, while $(x_1-x_2)\circ G$ shares the same property as $G$ that each monomial in  $(x_1-x_2)\circ G$ is divisible by some $y_i$ with $i>d$. It follows that $(x_1-x_2)\circ \delta(F)=0$. Since $\mathcal Q(\delta(W))$ holds, we conclude $\delta(F)=0$, implying $F=0$, since $\delta$ is bijective. 

(2) Assume $\mathcal Q(W)$ holds, equivalently  $(x_1-x_2)\circ F\ne 0$ for all nonzero $F\in W$. Since $W\subseteq S'$, \eqref{e: action-RS} gives that $(x_i-x_j)\circ F\ne 0$ for all nonzero $F\in W$ and all $i,j\in [n]$ with $i\ne j$. We consider a basis $F_1, \dots, F_a$ of $W$. 
Assume 
\begin{equation}\label{eq: element of L}
\sum_{\ell=1}^a \left(\sum_{k=1}^n c_{k,\ell}x_k\right)\circ F_\ell=0.
\end{equation}
Let $i,j\in [n]$ distinct and let $\sigma=(ij)\in \sym_n$ be the transposition interchanging $i$ and $j$. Acting with $\sigma$ on \eqref{eq: element of L} and using \eqref{e: action-RS} and the fact that $F_\ell\in S'$ for all $\ell\in [a]$, we have: 
\[
\sum_{\ell=1}^a \left(\sum_{k=1}^n c_{k,\ell}\sigma(x_k)\right)\circ F_\ell=0.
\]
Subtracting this from \eqref{eq:  element of L} gives 
\[
(x_i -x_j)\circ \sum_{\ell=1}^a (c_{i,\ell}-c_{j,\ell})F_\ell=0.
\]
Then $\mathcal Q(W)$ implies that $\sum_{\ell=1}^a (c_{i,\ell}-c_{j,\ell})F_\ell=0$, and hence $c_{i,\ell}=c_{j,\ell}$ for all $\ell\in [a]$ and thus $\mathcal P(W)$ holds.

(3) Since $(d)\notin \Lambda$, observe that a nonzero polynomial $F\in W_\Lambda$ does not contain $(x_k)^d$ in its support for any $k$, and hence $(x_1-x_2)\circ F\ne 0$ by \cite[Lemma 7.6]{HSS24}. Consequently, it follows that $\mathcal Q(W_\Lambda)$ holds and from (2) it follows that $\mathcal P(W_\Lambda)$ holds. 
\end{proof}

\section{Narrow algebras with general invariant socle}
\label{sec:narrow-inv}

In this section, we use the notation in Section 2 for fixed positive integers $n, d,r$ with  $r\le P_n(d)$. We study the parametrization of symmetric ideals in terms of their socle given by the map $\beta$ in \eqref{beta map}.

\begin{defn}
Let $0\le s< P_n(d)$. We say that a quotient $R/I$ is a {\em narrow algebra with $(s,d)$--invariant socle} if the inverse system of $I$ has the form $I^{-1}=W+S_{\geq-d+1}$, where $W$ is a vector subspace of $S'_{-d}$ and $\dim_\kk(W)=s$. 

We parametrize the space $W$ by its  orthogonal complement $U\in \Gr(P_n(d)-s, R'_d)$, where $U^\perp=W$.  We say that a {\it general narrow algebra $A$ with $(s,d)$--invariant socle} satisfies a property $(\mathcal P)$ if there exists a nonempty set $G$ in $\Gr(P_n(d)-s, R'_d)$ such that the property $(\mathcal P)$ holds for $A=R/I$, where $I=\Ann_R(U^\perp+S_{\geq-d+1})$ with $U\in G$. 
\end{defn}

The main result of the section, \Cref{thm: narrow}, says that, under appropriate assumptions on $d$, $s$ and $n$, a general narrow algebra with $(s,d)$--invariant socle is $d$-extremely narrow. For the purpose of later connections, we state the result in terms of  $r=P_n(d)-s$, where $1\le r\le P_n(d)$. 

Recall that $P_n(d)$ is independent of $n$ when $n\ge d$, and we denote it $P(d)$ in this case. Since the dimension of $R'_d$ is equal to $P(d)$, we identify the domain of $\beta$, $\Gr(r,R'_d)$, with $\Gr(r, \kk^{P(d)})$ by identifying the basis $\{m_\l\}_{\l\vdash d}$ with the standard basis of $\kk^{P_n(d)}$.  An important aspect of our result  is uniformity with respect to the number of variables, when $n\ge d$, as implied by the last statement of \cref{thm: narrow} below. 
 
\begin{thm}\label{thm: narrow}
Assume $n,d,r$ are positive integers with $r
\le P_n(d)$, and $\ch \kk>n$ or $\ch \kk=0$. There exists a nonempty open subset  $G$ of $\Gr(r, R'_d)$ such that if $U\in G$, then the ideal $J=\Ann_R(U^\perp+S_{\geqslant -d+1})$ and the algebra $A=R/J$ satisfy the following properties:  
 \begin{enumerate}
 \item The algebra $A$ is narrow with  $\dim_\kk J_d=\dim_\kk R_d-(P_n(d)-r)$. In particular, $J=\m^d$ when $r=P_n(d)$. 
\item If $r<P_n(d)$, then $A$ is $d$-extremely narrow, with socle polynomial 
 \[
 \big(\dim_\kk R_{d-1}-(P_n(d)-r)n+ \max\{P_n(d)-P_n(d-1)-r,0\}\big)z^{d-1}+(P_n(d)-r)z^d\,.
 \]
 \item  $J$ is generated in degree $d$. 
\item The algebra $A$ has the Weak Lefschetz Property. When $r<P_n(d)$,  the map $A_{i-1}\xrightarrow{\ell}A_i$ has maximal rank for all $i$ and all $\ell\in R_1$ such that $\ell$ is not a multiple of $x_1+\dots+x_n$. 
 \end{enumerate}
 Furthermore, when $n\ge d$ the set $G$ is independent of $n$, meaning that, when identifying $\Gr(r,R'_d)$ with $\Gr(r, \kk^{P(d)})$,  the equations defining $G$ (in Pl\"ucker coordinates) have coefficients that do not depend on $n$. 
 \end{thm}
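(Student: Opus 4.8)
The plan is to let $G$ be carved out of $\Gr(r,R'_d)$ by two explicit open conditions, check they are simultaneously satisfiable, and then read off items (1)--(4) from \cref{prop: narrow} and \cref{p: e-narrow-has-WLP}. Throughout set $s=P_n(d)-r$, and for $U\in\Gr(r,R'_d)$ write $W=U^\perp\subseteq S'_{-d}$, so $\dim_\kk W=s$ and $J=\Ann_R(W+S_{\geqslant-d+1})$. If $r=P_n(d)$ then $s=0$, $W=0$, $J=\m^d$: here I would take $G$ to be the whole (one-point) Grassmannian, note that (2) is vacuous, (1) and (3) are immediate, and recall that $R/\m^d$ has the WLP for trivial reasons. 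So assume $1\le r\le P_n(d)-1$, hence $1\le s\le P_n(d)-1$. Applying \cref{prop: narrow} to a basis $F_1,\dots,F_s$ of $W$ already gives that $A=R/J$ is narrow with $s(A)=d$, that $\dim_\kk J_d=\dim_\kk R_d-s$, and that the socle polynomial is $\bigl(\dim_\kk R_{d-1}-sn+\dim_\kk \widetilde{L}_W\bigr)z^{d-1}+sz^d$ once $\dim_\kk L_W=\dim_\kk\widetilde{L}_W$; this yields (1), and reduces the remaining items to producing a nonempty open $G$ on which (a) $\mathcal P(W)$ holds and (b) $\dim_\kk L_W=\max\{s-P_n(d-1),0\}=:e$. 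Indeed, given (a), \cref{r: x} identifies $L_W\subseteq x(R_0)^s$ with $x=x_1+\dots+x_n$, so \cref{prop: narrow} (and \cref{rem:L=0} in the case $n=2$, where one checks that the numerics force $e=0$, hence $L_W=0$) makes $A$ \emph{$d$-extremely narrow} with $J$ generated in degree $d$, which is (3); then \cref{p: e-narrow-has-WLP} gives (4) with non-Lefschetz locus contained in the multiples of $x$; and (b) pins down the socle polynomial in (2).

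For the construction I would set $G=G_1\cap G_2$, where (stating the conditions on $W=U^\perp$) $G_1$ asks that $W$ meet the subspace $\ker\bigl((x_1-x_2)\circ-\bigr)\subseteq S'_{-d}$ only in $0$, and $G_2$ asks that the restriction $(e_1\circ-)|_W\colon W\to S'_{-(d-1)}$ (with $e_1=x_1+\dots+x_n$) have maximal rank $\min\{s,P_n(d-1)\}$. On $G_1$ the property $\mathcal Q(W)$ holds by definition, hence $\mathcal P(W)$ by \cref{lem: W0}(2), establishing (a). On $G_1\cap G_2$, since $\widetilde{L}_W=\ker\bigl((e_1\circ-)|_W\bigr)$ the rank hypothesis gives $\dim_\kk\widetilde{L}_W=e$, and since $\mathcal P(W)$ holds \cref{lem: 2L} gives $\dim_\kk L_W=\dim_\kk\widetilde{L}_W$, so $\dim_\kk L_W=e$, establishing (b). Both $G_1$ and $G_2$ are Zariski open on $\Gr(s,S'_{-d})$ --- the first because meeting a fixed subspace trivially is an open condition, the second by upper semicontinuity of kernel dimension --- so pulling back along the isomorphism $\perp\colon\Gr(r,R'_d)\xrightarrow{\sim}\Gr(s,S'_{-d})$ gives an open $G$. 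For nonemptiness: $G_2\ne\emptyset$ because $R'=\kk[e_1,\dots,e_n]$, so $e_1$ is a nonzerodivisor, $e_1\cdot-\colon R'_{d-1}\to R'_d$ is injective, and dually $e_1\circ-\colon S'_{-d}\to S'_{-(d-1)}$ is surjective, whence a generic $s$-dimensional $W$ has $(e_1\circ-)|_W$ of maximal rank; $G_1\ne\emptyset$ because $\ker\bigl((x_1-x_2)\circ-\bigr)\cap S'_{-d}$ has dimension at most $1$ --- a nonzero element of it has nonzero $M_{(d)}$-coordinate by \cite[Lemma 7.6]{HSS24}, so projection to that coordinate is injective on the kernel --- while $s\le P_n(d)-1$, so an $s$-dimensional subspace avoiding a line exists. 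Since $\Gr(r,R'_d)$ is irreducible, $G=G_1\cap G_2$ is nonempty (and dense).

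For the uniformity statement, when $n\ge d$ I would identify $\Gr(r,R'_d)$ with $\Gr(r,\kk^{P(d)})$ using $\{m_\l\}_{\l\vdash d}$ and $\Gr(s,S'_{-d})$ with $\Gr(s,\kk^{P(d)})$ using the dual basis $\{M_\l\}_{\l\vdash d}$; in these coordinates $\perp$ is the standard Grassmann duality, independent of $n$. It then suffices to see that $G_1$ and $G_2$ are cut out by equations with $n$-independent coefficients. For $G_2$: the matrix of $e_1\circ-\colon S'_{-d}\to S'_{-(d-1)}$ in the bases $\{M_\l\}$, $\{M_\mu\}$ is $(c^\l_\mu)$, and \cref{lem: c_mu_nu} says these constants do not depend on $n$ once $n\ge d$, so vanishing of the relevant minors is an $n$-free condition on the Plücker coordinates of $W$. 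For $G_1$: the map $\delta$ of \cref{not: delta} intertwines $(x_1-x_2)\circ-$ in $n$ variables with $(x_1-x_2)\circ-$ in $d$ variables (each touches only $y_1,y_2$, while $\delta$ sends $y_{d+1},\dots,y_n$ to $0$), so $\delta$ carries the kernel in $n$ variables isomorphically onto the kernel in $d$ variables; since $\delta$ scales the entire basis $\{M_\l\}$ by the single scalar $\binom{n}{d}^{-1}$, it fixes this kernel as a point of the Grassmannian, so ``$W$ meets this kernel trivially'' is an $n$-free condition. Finally $P_n(d)=P(d)$ and $P_n(d-1)=P(d-1)$ for $n\ge d$, so the numerics agree and $G$ is literally the same subvariety for all $n\ge d$.

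I expect the uniformity claim to be the main obstacle: it requires careful bookkeeping of how the perfect pairing $\circ$, the operators $(x_1-x_2)\circ-$ and $e_1\circ-$, and the dual bases $\{m_\l\}$, $\{M_\l\}$ transform under $\delta$, together with the stabilization of the structure constants $c^\l_\mu$ from \cref{lem: c_mu_nu}. The only other delicate point is the bound $\dim_\kk\bigl(\ker((x_1-x_2)\circ-)\cap S'_{-d}\bigr)\le 1$ underlying nonemptiness of $G_1$, which is supplied by \cite[Lemma 7.6]{HSS24}; everything else is an assembly of \cref{prop: narrow}, \cref{lem: 2L}, \cref{lem: W0}, and \cref{p: e-narrow-has-WLP}.
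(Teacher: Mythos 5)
Your construction of $G$ for a fixed $n$, and the derivation of items (1)--(4) from it, is correct and in places cleaner than the paper's argument: the paper proves nonemptiness by exhibiting the explicit point $U_0$ spanned by the $m_\lambda$ with $\lambda$ outside the lexicographically smallest partitions and checking an echelon form, whereas you get nonemptiness of your $G_2$ from the surjectivity of $e_1\circ-\colon S'_{-d}\to S'_{-(d-1)}$ (dual to $e_1$ being a nonzerodivisor on $R'$) plus a generic-intersection count, and nonemptiness of your $G_1$ from the observation that $\ker\bigl((x_1-x_2)\circ-\bigr)\cap S'_{-d}$ is at most a line. Your reduction of (2)--(4) to $\mathcal P(W)$ plus the value of $\dim_\kk L_W$, via \cref{prop: narrow}, \cref{lem: 2L}, \cref{r: x}, \cref{rem:L=0} and \cref{p: e-narrow-has-WLP}, matches the paper's route, and your handling of $r=P_n(d)$ and of $n=2$ (where indeed $P_n(d)-1\le P_n(d-1)$ forces $e=0$) is fine.

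The genuine gap is in the uniformity argument for $G_1$. The kernel of $(x_1-x_2)\circ-$ on $S'_{-d}$ is exactly the line spanned by $h_d=\sum_{\lambda\vdash d}m_\lambda$ (it contains $h_d$ since $x_1\circ h_d=x_2\circ h_d=h_{d-1}$, and is at most one-dimensional by your projection argument). In the $\{M_\lambda\}$-coordinates this line is spanned by $\bigl(\binom{n}{p_0,p_1,\dots,p_d}\bigr)_{\lambda}$, and these multinomial coefficients scale differently with $n$ for partitions with different numbers of parts; so the line is \emph{not} a fixed point of the Grassmannian as $n$ varies. Concretely, for $d=2$, $r=1$, the locus where $\mathcal Q(U^\perp)$ fails is the single point $[u_{(2)}:u_{(1,1)}]=[n-1:-2]$ in the $m$-coordinates of $R'_2$, which moves with $n$; hence your $G_1$ --- being the \emph{entire} locus where $\mathcal Q$ holds --- is provably $n$-dependent, and no bookkeeping can rescue it. The step that fails is the claim that $\delta$ rescales the whole basis $\{M_\lambda\}$ by the single scalar $\binom{n}{d}^{-1}$: the actual factor is $\frac{d!\,(n-\#\lambda)!}{n!\,(d-\#\lambda)!}$, which depends on $\#\lambda$ (it equals $\binom{n}{d}^{-1}$ only when $\#\lambda=d$). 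You inherited this from the imprecise second bullet of \cref{not: delta}, but your argument leans on it at exactly the point where it matters. To repair the uniformity claim one must replace the full $\mathcal Q$-locus by a strictly smaller nonempty open set cut out by $n$-independent equations --- this is what the paper's proof of \cref{prop: dimension linear relations} does by selecting one specific maximal minor $\mathcal M_0$ and transporting it through $\delta$ --- and some such additional idea is unavoidable here.
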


 The proof is given at the end of the section, and it is based on the following proposition, which generalizes \cite[Proposition 7.15]{HSS24} and uses notation introduced in \cref{sec:linear}.  

\begin{prop}\label{prop: dimension linear relations}
Assume the setup of \cref{thm: narrow} and $1\le r\le P_n(d)$.  There exists a nonempty open subset $G$ of $\Gr(r,R'_d)$ such that if $U\in G$ and  $W=U^\perp$, then $\mathcal Q(W)$ and $\mathcal P(W)$ hold and 
\[
\dim_\kk L_{W}=\max\{P_n(d)-P_n(d-1)-r,0\}\,.
\]
Moreover, when $n\ge d$, the set $G$ is independent of the number of variables. 
\end{prop}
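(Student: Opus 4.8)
The goal is to produce a nonempty open $G\subseteq \Gr(r,R'_d)$ on which $\mathcal Q(W)$, $\mathcal P(W)$ hold (for $W=U^\perp$) and $\dim_\kk L_W$ achieves the stated minimum, with $G$ independent of $n$ once $n\ge d$. My approach is to reduce everything to a single, $n$-independent piece of linear algebra, then invoke openness of rank conditions on the Grassmannian. First I would observe that by \Cref{lem: 2L} we have the chain of inequalities $\dim_\kk L_W\ge \dim_\kk\widetilde L_W\ge \max\{P_n(d)-P_n(d-1)-r,0\}$, and that equality $\dim_\kk L_W=\dim_\kk\widetilde L_W$ is equivalent to $\mathcal P(W)$. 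So the problem splits into two tasks: (a) find an open set where $\mathcal Q(W)$ holds — this gives $\mathcal P(W)$ for free by \Cref{lem: W0}(2), hence the first equality — and (b) find an open set where $\dim_\kk\widetilde L_W$ equals the lower bound, i.e. where the map $W\to S'_{-(d-1)}$, $F\mapsto(\sum_i x_i)\circ F$, has maximal rank $\min\{r, P_n(d-1)\}$. Intersecting the two open sets (and checking nonemptiness of the intersection) finishes the proof.

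For task (a), the natural candidate is the coordinate subspace approach from \Cref{lem: W0}(3): if $W=W_\Lambda=\Span_\kk(M_\l:\l\in\Lambda)$ for a set $\Lambda$ of partitions of $d$ with $(d)\notin\Lambda$, then $\mathcal Q(W_\Lambda)$ holds. Dually, $U=U_\Lambda$ is the coordinate subspace $\Span_\kk(m_\l:\l\vdash d,\ \l\notin\Lambda)$, using the duality of \Cref{l:Mm}; since $r\le P_n(d)$ and we only need to exclude one partition, such a $U$ with $(d)\in U^{\perp\perp}$-complement — equivalently $(d)\notin\Lambda$ — exists as long as $r\le P_n(d)$ (if $r=P_n(d)$ then $W=0$ and all conditions are vacuous/trivial). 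The condition "$\mathcal Q(W)$" is a Zariski-open condition on $U\in\Gr(r,R'_d)$: indeed $\mathcal Q(W)$ says the linear map $W\to S'_{-(d-1)}$ given by contraction with $x_1-x_2$ is injective, i.e. has rank $r$, which is an open condition cut out by non-vanishing of maximal minors, and these minors are polynomials in the Plücker coordinates of $U$ with $n$-independent coefficients once $n\ge d$ (one passes through $\delta$ of \Cref{not: delta}, which identifies $S'_{-d}$, $S'_{-(d-1)}$ with the corresponding invariant pieces of $\kk[y_1,\dots,y_d]$ independently of $n$, and uses \Cref{lem: c_mu_nu} to see the matrix entries are $n$-independent). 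So the open locus $G_{\mathcal Q}$ where $\mathcal Q(W)$ holds is nonempty (it contains the $U_\Lambda$) and $n$-independent.

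For task (b), similarly the condition that $F\mapsto(\sum_i x_i)\circ F$ has rank $\min\{r,P_n(d-1)\}$ on $W=U^\perp$ is open in $U$; it is cut out by non-vanishing of some maximal minor of the matrix of this map expressed in the bases $\{M_\l\}$ and $\{M_\mu\}$, whose entries are exactly the constants $c^\l_\mu$ of \Cref{lem: c_mu_nu}, again $n$-independent for $n\ge d$ (after normalizing via $\delta$; the binomial factors $\binom{n}{d}$, $\binom{n}{d-1}$ relating the two normalizations of \Cref{not: delta} are nonzero scalars when $n\ge d$ and do not affect vanishing of minors). It remains to exhibit one $U$ in this locus to see $G_{\mathrm{syz}}$ is nonempty; here I would work with $W$ spanned by a generic enough choice, or more concretely note that because \Cref{lem: c_mu_nu} guarantees the "add a box" matrix $(c^\l_\mu)$ has a combinatorial structure (it is the transpose of multiplication by $p_1=\sum x_i$ on monomial symmetric functions), a generic subspace $W$ of dimension $r$ meets the kernel of this map in the expected dimension $\max\{r-P_n(d-1),0\}$ — this is a standard transversality fact and the corresponding non-vanishing of a minor defines a nonempty open set. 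Finally I would set $G=G_{\mathcal Q}\cap G_{\mathrm{syz}}$; the intersection of two nonempty opens in the irreducible variety $\Gr(r,R'_d)$ is nonempty, and since both defining sets have $n$-independent equations, so does $G$.

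**Main obstacle.** The only genuinely delicate point is establishing that the intersection $G_{\mathcal Q}\cap G_{\mathrm{syz}}$ is nonempty by producing a single explicit $U$ — or at least knowing it exists — that simultaneously satisfies $\mathcal Q(W)$ and the maximal-rank syzygy condition; the coordinate subspaces $U_\Lambda$ handle $\mathcal Q$ cleanly but one must check (or argue by irreducibility, as above) that they, or a nearby deformation, also have the generic syzygy rank. The rest — openness of the rank conditions, and $n$-independence via $\delta$ and \Cref{lem: c_mu_nu} — is routine once the bookkeeping of bases $\{m_\l\}$ versus $\{M_\l\}$ and their normalizations under $\delta$ is set up carefully.
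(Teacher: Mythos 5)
Your proposal follows essentially the same route as the paper: split the statement into the open condition $\mathcal Q(U^\perp)$ (which yields $\mathcal P$ and $\dim_\kk L_W=\dim_\kk\widetilde L_W$ via \cref{lem: W0} and \cref{lem: 2L}) and the open maximal-rank condition forcing $\dim_\kk\widetilde L_W$ down to its lower bound; express both as non-vanishing of minors that are linear in the Pl\"ucker coordinates of $U$; and transfer $n$-independence through $\delta$ and \cref{lem: c_mu_nu}. Two points need tightening. First, where you invoke ``standard transversality'' to get the expected kernel dimension for $T(F)=\left(\sum_i x_i\right)\circ F$, you are implicitly using that $T\colon S'_{-d}\to S'_{-(d-1)}$ has rank exactly $P_n(d-1)$ (is surjective); the paper proves this by showing the add-a-box matrix $(c^\l_\mu)$ is in row echelon form with a pivot in every row, and your appeal to the ``combinatorial structure'' of that matrix needs this verification made explicit, since otherwise the claimed value of $\dim_\kk\widetilde L_W$ does not follow. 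Second, for nonemptiness of the intersection the paper exhibits a single coordinate subspace $U_0$ (spanned by the $m_\l$ with $\l$ outside the set of lex-smallest $P_n(d)-r$ partitions of $d$) lying in both open sets, the echelon structure of the last $r$ rows of the combined matrix doing double duty for both claims; your irreducibility argument is a legitimate alternative over an infinite field, which is the setting of the main theorems. Finally, note that $\dim_\kk W=P_n(d)-r$, not $r$: the maximal rank of $T|_W$ is $\min\{P_n(d)-r,\,P_n(d-1)\}$ and the expected kernel dimension is $\max\{P_n(d)-r-P_n(d-1),0\}$; your two formulas with $r$ in place of $P_n(d)-r$ are slips that do not affect the logic, since you state the target formula correctly.
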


\begin{proof}
When $d=1$ or $n=1$ then, since $1\le r\le P_n(d)$, it follows that $r=P_n(d)=1$, and in this case we set $G=\Gr(1,R'_1)$. In this case $W=0$ and $\dim_\kk L_W=0$ as claimed, and $\mathcal Q(W)$, $\mathcal P(W)$ hold vacuously. Assume from now on that $d\ge 2$ and $n\ge 2$. 

For $U\in \Gr(r, R'_d)$, we set  $W=U^\perp$ and  we denote  
\[
C(U)=\dim_\kk \widetilde L_{W}\qquad\text{and}\qquad 
D(U)=\dim_\kk L_{W}\,.
\]
Let $\Lambda_0$ be the set of smallest $P_n(d)-r$ partitions of $d$ with respect to the lexicographic order. Setting
\[
U_0=\langle m_\l :   \l \not\in \Lambda_0 \rangle \subseteq R_{d}
\quad 
\text{yields}
\quad
{U_0}^\perp=\langle M_\l :   \l \in \Lambda_0 \rangle=W_{\Lambda_0} \subseteq S'_{-d}.
\]
We first prove: 
\begin{claim1} 
\label{claim1} There exists a nonempty Zariski open subset $G_1$ of $\Gr(r,R'_d)$ which is independent of $n$ when $n\ge d$,  such that $U_0\in G_1$ and 
\[
C(U)=\max\{P_n(d)-P_n(d-1)-r,0\}\qquad\text{for all}\quad U\in G_1.
\]
\end{claim1}

To prove the claim, let $U\in \Gr(r,R'_d)$, let $W=U^\perp$ and let $F=\sum_{\l\vdash d} \theta_\l M_\l\in S'_{-d}$. 
Then $F\in \widetilde L_W$ if and only if the two conditions below are satisfied
\begin{equation}\label{eq: L tilde conditions}
 \begin{cases}
 F\in U^\perp\\
\left(\sum_{i=1}^n x_i \right)\circ F=0.
 \end{cases}
 \end{equation}
 We analyze the two conditions separately. 
 
 The second condition in \eqref{eq: L tilde conditions} is equivalent to the system of linear equations
 \begin{equation*}
\left( m_\mu \sum_{i=1}^n x_i\right) \circ F=0 \text{ for all } \mu \vdash d-1.
\end{equation*}
Utilizing \Cref{lem: c_mu_nu}, including the notation $c^\l_\mu$ introduced therein,   we express the products $m_\mu \sum_{i=1}^n x_i$ in the basis $\{m_\l\}$ of $R'_d$ as follows
\[
\left(\sum_{\l\vdash d} c^\l_\mu m_\l\right )\circ \left ( \sum_{\nu\vdash d} \theta_\nu M_\nu \right)= \sum_{\l\vdash d} c^\l_\mu \theta_\l =0 \quad \text{ for all } \mu \vdash d-1.
\]
The set of equations above is equivalent to the following system of equations, which is independent of $n$ when $n\ge d$: 
\begin{equation}\label{eq: symmetric system 2}
\sum_{\l\vdash d} c^\l_\mu \theta_\l =0 \quad \text{ for all } \mu \vdash d-1.
\end{equation}

Recall from \Cref{lem: c_mu_nu} that $c^\l_\mu\neq 0$ if and only if $\l$ can be obtained by adding a box to $\mu$ and possibly re-ordering. In particular, this implies that if $c^\l_\mu\neq 0$ then $\l\geq_{\lex}\mu_{\square}$, where lex refers to the lexicographic order and $\mu_{\square}\vdash d$ is the partition $\mu$ followed by an additional part equal to 1. Moreover, if $\mu$ is the $i$-th smallest partition of $d-1$ in the lexicographic order then $\mu_{\square}$ is the $i$-th smallest partition of $d$ in the lexicographic order. Thus, arranging the partitions of $d$ and $d-1$ in lexicographic order from smallest to largest, this implies that the $P_n(d-1)\times P_n(d)$ matrix with scalar entries $c_\l^\mu$ of the system \eqref{eq: symmetric system 2} is in row echelon form with pivots in every row, occupying the first $P_n(d-1)$ columns.  In particular the rank of the matrix $\begin{bmatrix}  c^\l_\mu\end{bmatrix}$ is $P_n(d-1)$.

Next we turn to the first condition in \eqref{eq: L tilde conditions}, namely that $F\in U^\perp$. Consider a basis for $U$ consisting of $r$ polynomials $u_i=\sum_{\l \vdash d} u_{i,\l} m_\l$. Due to \eqref{eq: lem 2.2}, the condition $F\in U^\perp$ becomes
\begin{equation}\label{eq: symmetric system 1}
\sum_{\l\vdash d} u_{i,\l}\theta_\l =0 \quad \text{ for all } 1\leq i\leq r.
\end{equation}

Upon combining the equations \eqref{eq: symmetric system 2} and \eqref{eq: symmetric system 1}, we argue next that the corresponding $(P_n(d-1)+r)\times P_n(d)$ matrix 
\[
A_U=\begin{bmatrix}
c^\l_\mu\\
 u_{i,\l}\
\end{bmatrix}_{\mu\vdash d-1, i=1,\ldots,r , \l \vdash d}
\]
attains maximal rank $\max\{P_n(d-1)+r,P_n(d)\}$, provided that $U$ is chosen in an appropriate open set of $\Gr(r,R'_d)$. First, note that if $U=U_0$, then the matrix formed by the last $r$ rows of $A_{U_0}$, coming from \eqref{eq: symmetric system 1}, is in echelon form, with pivots in the last $r$ columns. Recalling that the matrix formed by the first $P_n(d-1)$ rows of $A_{U_0}$ is in echelon form with pivots in the first $P_n(d-1)$ columns, we conclude that $A_{U_0}$ has rank $P_n(d-1)+r$ provided that $P_n(d-1)+r$ does not exceed $P_n(d)$, and $P_n(d)$ otherwise.  

The condition  that the matrix $A_U$ does not have maximal rank is the vanishing of its maximal minors. The maximal minors of $A_U$  can be written by generalized Laplace expansion as linear combinations of the maximal minors of $\begin{bmatrix}  u_{i,\l}\end{bmatrix}$ with coefficients given by complementary  minors of $\begin{bmatrix}  c^\l_\mu\end{bmatrix}$. That is, the locus where $A_U$ fails to have maximal rank is cut out by linear equations in the Pl\"ucker coordinates of $U$. Therefore, the complement $G_1$ of the set cut out by these equations is open and $U_0\in G_1$ by the previous considerations. The set $G_1$ does not depend on $n$ when $n\ge d$, since the entries of $A_U$, which determine the complement of $G_1$, are independent of $n$.

 Since $A_U$ has maximal rank on the set $G_1$, we conclude that \[
 C(U)=P_n(d)-\rank A_U=\max\{P_n(d)-P_n(d-1)-r,0\}
 \]
 for all $U\in G_1$.  
 
We now show: 
 \begin{claim2} There exists a nonempty  open set $G_2$ of $\Gr(r,R'_d)$, which is independent of $n$ when $n\ge d$, such that $\mathcal Q(U^\perp)$ holds for all $U\in G_2$.
 \end{claim2} 
   
 To prove Claim 2, consider the following two conditions on $F=\sum_{\l\vdash d} \theta_\l M_\l\in S'_{-d}$:
 \begin{align}
 \begin{split}
 \label{eq: 12F}
 (x_1-x_2)&\circ F=0\qquad\text{and}\\
 F&\in U^\perp\,,
 \end{split}
 \end{align}
 where $U$ has a basis consisting of $u_i=\sum_{\l \vdash d} u_{i,\l} m_\l$, with $1\le i\le r$. For a given $U$, the condition $\mathcal Q(U^\perp)$ holds if and only if the equations \eqref{eq: 12F} imply $F=0$. As seen above, the condition $F\in U^\perp$ translates as the equations \eqref{eq: symmetric system 1}. Further, upon writing the expression $(x_1-x_2)\circ F$ in terms of the monomial basis of $S_{-d+1}$ and equating the coefficients to 0, the condition $(x_1-x_2)\circ F=0$ translates into $\dim_{\kk} S_{-d+1}$ linear equations in the variables $\theta_\l$. We denote by $C_U$ the matrix of this system. Thus, \eqref{eq: 12F} translates into a homogeneous system of $\dim_\kk S_{-d+1}+r$ equations in $P_n(d)$ indeterminates. 
 Using \Cref{lem: W0}(3) we deduce that $\mathcal Q(W)$ holds for $W=W_{\Lambda_0}={U_0}^\perp$, meaning that, when $U=U_0$, the system \eqref{eq: 12F} has only the trivial solution. In general, for $U\in \Gr(r,R'_d)$, the condition that the system has only the trivial solution is equivalent to the fact that its matrix has maximal rank. This matrix has the form
 \[
B_U=\begin{bmatrix}
C_U\\
 u_{i,\l}\
\end{bmatrix}_{i=1,\ldots,r , \l \vdash d}.
\]
As observed earlier, when $U=U_0$ the matrix formed by the last $r$ rows of $B_{U_0}$ is in row echelon form, with pivots in the last $r$ columns, and hence there exists a maximal nonzero minor $\mathcal M_0$ of $B_{U_0}$ that uses these last $r$ rows. For any $U$, the condition that the maximal minor of $B_U$ using the same rows and columns as $\mathcal M_0$
is zero is a linear equation in the Pl\"ucker coordinates of $U$. The complement $G(n)$ of the set cut out by this equation is thus open, and contains $U_0$. We conclude that there exists a nonempty open set $G(n)$  for which \eqref{eq: 12F} implies $F=0$ whenever $U\in G(n)$. In view of \cref{lem: W0}(2), we see that $\mathcal P(U^\perp)$ holds for all $U\in G(n)$. The set $G(n)$ depends on $n$ because the matrix $C_U$ depends on $n$. However, when $n\ge d$,  we  describe next a nonempty open set that does not depend on $n$, while satisfying the same properties. 

Assume $n\ge d$, and hence $P_n(d)=P(d)$. Since the bijection that takes an $r$-dimensional subspace of $R'_d$  to its $(P(d)-r)$-dimensional orthogonal complement in $S'_{-d}$ induces an isomorphism of algebraic varieties between the respective Grassmannians, the set $G(d)^\perp=\{U^\perp \mid U\in G(d)\}$ is an open subset of the Grassmannian of $(P(d)-r)$-dimensional subspaces of degree $-d$ invariant polynomials in $\kk[y_1, \dots, y_d]$. We parametrize $G(d)$ using Pl\"ucker coordinates corresponding to the basis $\{m_\lambda\}_{\lambda\vdash d}$ and $G(d)^\perp$ using the  Pl\"ucker coordinates corresponding to the dual basis $\{M_\lambda\}_{\lambda\vdash d}$, cf. \eqref{l:Mm}. We use the map $\delta$ introduced in \cref{not: delta}. Since $\delta(M_\l)=\binom{n}{d}^{-1}M_\l$, where the first $M_\l$ is a  polynomial in $n$ variables and the second is a  polynomial in $d$ variables, $\delta$ multiplies each Pl\"ucker coordinate by $\binom{n}{d}^{-P(d)+r}$ and thus induces the identity map on tuples of Pl\"ucker coordinates viewed as coordinates in projective space.

Let $D$ denote the preimage of  $G(d)^\perp$ under $\delta$. Then $D$ is an open subset of the Grassmannian $\Gr(P(d)-r, S'_d)$. While the elements of $S'_{-d}$ depend on $n$, the set $D$ does not, as the coefficients of its defining equations are the same as those of the set $G(d)^\perp$, which does not depend on $n$.  We set then $G_2=D^\perp$. If $U\in G_2$, then $U^\perp\in D$ and hence $\delta(U^\perp)\in G(d)^\perp$, meaning that $(\delta(U^\perp))^\perp\in G(d)$. By the definition of $G(d)$, we know that $\mathcal Q(\delta(U^\perp))$ holds, and hence $\mathcal Q(U^\perp)$ holds by \cref{lem: W0}(1).  This finishes the proof of Claim 2.

It follows from Claim 2 and \cref{lem: W0} that $\mathcal P(U^\perp)$ also holds for all $U\in G_2$. Then, \cref{lem: 2L} gives that $C(U)=D(U)$ for all $U\in G_2$. Using also Claim 1, we conclude
 \[
 D(U)=\max\{P_n(d)-P_n(d-1)-r,0\}\qquad\text{for all $U\in G_1\cap G_2$.}
 \]
 which gives the conclusion of our result.
\end{proof}

We now give the proof of the theorem. 

 \begin{proof}[\it Proof of \cref{thm: narrow}]
 Let $G$ be as in \cref{prop: dimension linear relations}.
  
Part (1) is a direct consequence of \cref{prop: narrow}, noting that $\dim_\kk U^\perp=P_n(d)-r$. This statement holds for any $U\in \Gr(r,R'_d)$, and does not require $U\in G$. 
 
For part (2), let $U\in G$ and set $A=R/J$, where $J=\Ann_R(U^\perp+S_{\geqslant -d+1})$, and assume $r<P_n(d)$. 
 By \cref{prop: dimension linear relations}, we know that $\mathcal P (U^\perp)$ holds and 
 \begin{equation}
 \label{e:dimL}
\dim_\kk L_{U^\perp}=\max\{P_n(d)-P_n(d-1)-r,0\}\,.
\end{equation}
 In view of \cref{r: x}, we conclude that \eqref{eq:Lx} holds with $x=x_1+\dots+x_n$, hence $A$ is $d$-extremely narrow when $n\ge 3$ by \cref{prop: narrow}. When $n<3$, \eqref{e:dimL} gives $\dim_\kk L_{U^\perp}=0$, and hence $A$ is $d$-extremely narrow as well, by \cref{rem:L=0}. 
 
Part (3) follows from part (1) and part (2). Part (4) follows from \cref{p: e-narrow-has-WLP} when $r<P_n(d)$, and is clear when $r=P_n(d)$.
\end{proof}

\section{Defining equations of a narrow algebra with general invariant socle}
\label{s: equations}

In this section, our goal is to find generators up to symmetry for the symmetric ideal $\Ann_R(U^\perp+S_{\geqslant -d+1})$, where $U$ is in the open set of \cref{thm: narrow} and $n$ is sufficiently large. In \Cref{t: CD} we will establish that these ideals are the general symmetric ideals, thus our results in this section ultimately describe the generators of general $(r,d)$-symmetric ideals in sufficiently large number of variables. 
This is established in \cref{thm: gens} below, whose proof ingredients revise and extend some of the arguments in \cite{HSS24}. A notable aspect of our results is that the generators do not depend on $n$, as long as $n$ is sufficiently large. 

First we recall some terminology introduced in \cite{HSS24}.

\begin{defn}
\label{d:admissible} Fix a partition $\lambda=(\lambda_1,\cdots,\lambda_s)$ of $d$ with $s$ parts, with $d\ge 1$. 

If $m$ is a monomial of degree $d$, then we write $\type(m)=\lambda$ if $\l$ is the unique partition of $d$ such that $m$ has type $\l$. We denote by $()$ the  empty partition with $0$ parts and set $\type(1)=()$. 

We say that $b\in R$ is a $\lambda$-{\it binomial} if $b=m-m'$, with $m,m'$  distinct monomials of type $\lambda$. For $b$ as above, we set $g(b)=\gcd(m,m')$ and we say that $b$ is {\it admissible} if $g(b)$ is relatively prime with both $\frac{m}{g(b)}$ and $\frac{m'}{g(b)}$. 
\end{defn}

\begin{ex}
\label{ex: admissible}
 Let $d=3$ and $\lambda=(2,1)$. Then the $\lambda$-binomials $x_1^2x_2-x_1^2x_3$, $x_1^2x_2-x_3^2x_2$, $x_1^2x_2-x_3^2x_4$ are all admissible, while the binomial $x_1^2x_2-x_2^2x_3$ is not admissible. 
\end{ex}

\begin{rem}
\label{rem: decompose-admissible}
A $\l$-binomial is either admissible, or else it can be written as a sum of admissible $\l$-binomials. 
For example, consider the decompositions
\begin{align*}
x_1^2x_2-x_2^2x_3&=(x_1^2x_2-x_1^2x_3)+(x_1^2x_3-x_2^2x_3)\\
x_1^3x_2^2x_3-x_3^3x_1^2x_2&=(x_1^3x_2^2x_3-x_4^3x_5^2x_6)+(x_4^3x_5^2x_6-x_3^3x_1^2x_2)\,.
\end{align*}
In fact, when $n$ is sufficiently large, this decomposition can be done with only two admissible $\l$-binomials, as in the second equation above. 
\end{rem}

\begin{defn}
 We say that a partition $\gamma$ is a {\it subpartition} of the partition $\lambda$, and we write $\gamma\subseteq \lambda$, if the multiset of parts of $\gamma$ is a submultiset of the parts of $\lambda$. If $\gamma\subseteq\lambda$, we describe $\gamma$ by indicating which parts of $\lambda$ are present in $\gamma$ as follows. 
 If $\gamma=(\gamma_1, \cdots, \gamma_t)$, then we can choose distinct integers $k_1, \cdots, k_t$ such that  $\Psi_i=\lambda_{k_i}$ for all $i$ with $1\le i\le t$.  Since the choice of such integers is not necessarily unique, we further require that these indices are chosen in order, starting with $k_1$ and ending with $k_t$ and, whenever a choice is to be made, we choose $k_i$ to be the smallest of the available choices. The indices $k_i$ are unique and we set $T(\lambda, \gamma) :=\{k_1, \dots, k_t\}$. 
\end{defn}

\begin{ex}
\label{e:div}
If $\lambda=(5,5,2,2,1)$, then $\gamma=(5,5,2,1)$ is a subpartition of $\l$. There are two choices for $k_1$ with $\gamma_1=\lambda_{k_1}$, namely $k_1=1$ or $k_1=2$. Since we must choose the smallest of the choices,  we  take $k_1=1$. The only remaining choice for $k_2\ne k_1$ with $\gamma_2=\lambda_{k_2}$ is $k_2=2$. Further, there are two choices for $k_3$ with  $\gamma_3=\lambda_{k_3}$, namely $k_3=3$ or $k_3=4$.  Our definition dictates $k_3=3$. Finally, we see that $\gamma_4=1=\l_5$, and hence $k_4=5$, and thus $T((5,5,2,2,1),(5,5,2,1))=\{1,2,3,5\}$.
\end{ex}

\begin{ex}
    Continuing on \cref{ex: admissible} above, notice that the type of $g(b)$ with $b=x_1^2x_2-x_1^2x_3$, $x_1^2x_2-x_3^2x_2$, respectively, $x_1^2x_2-x_3^2x_4$ is $\gamma=(2)$, $(1)$, respectively $()$. These are all subpartitions of $\l=(2,1)$ not equal to $\l$, and $T(\lambda, \gamma)=\{1\}$, $\{2\}$, respectively $\emptyset$. 
\end{ex}

\begin{rem}
Let $d\ge 1$ and let  $\lambda=(\lambda_1, \dots, \lambda_s)$ be a partition with $s$ parts. Let $b$ be a $\lambda$-binomial and let $\gamma=\text{type}(g(b))$.
The following statements are  then equivalent:
\begin{enumerate}
\item $b$ is admissible; 
\item $\gamma\subsetneq \lambda$, and there exists a choice of indices with  $i_1, \dots, i_s$ distinct and $j_1, \dots, j_s$ distinct such that  $b=x_{i_1}^{\lambda_1} \cdots x_{i_s}^{\lambda_s} - x_{j_1}^{\lambda_1} \cdots  x_{j_s}^{\lambda_s}$ and
\begin{equation}
\label{e:T}
i_\ell=j_\ell\quad\text{for all $\ell\in T$}\qquad\text{and}\qquad  \{i_\ell \, \mid \, \ell \not \in T\} \cap \{j_\ell \, \mid \, \ell \not \in T\} = \emptyset
\end{equation}
where $T=T(\l,\gamma)$. 
\end{enumerate}
\end{rem}

 The following is a modification of the construction in \cite[Construction 6.8]{HSS24}.
\begin{constr}
 \label{c:f} 
Let $\kk$ be a field.  Fix  a positive integer $d$  and ${\boldsymbol{\alpha}}=(\alpha_\l)\in \kk^{P(d)}$. We define
 \begin{equation}
 \label{e:h}
 h_{{\boldsymbol{\alpha}}}=\sum_{\l\vdash d }\alpha_\l x^\l\,.
 \end{equation}
 Fix an order on the partitions $\l\vdash d$ (e.g. lexicographic order). 
 Assume $\bsa\ne 0$ and let $\tau\vdash d$ with $\tau=(\tau_1, \dots, \tau_s)$ be smallest with $\alpha_\tau\ne 0$ in the order we fixed.  
Assume $n$ is sufficiently large for the construction below to be possible. For each partition $\l\vdash d$, $\l\ne \tau$ and for each partition $\gamma$ with $\gamma\subsetneq \lambda$,  including $\gamma=()$, we choose an admissible  $\lambda$-binomial $b(\lambda, \gamma)$ with $g(b(\l, \gamma))=\gamma$ 
and such that none of the variables in the support of $b(\l,\gamma)$ are in the support of $h_{ {\boldsymbol{\alpha}}}$ and the supports of any two binomials  of the form $b(\lambda, \gamma)$  are disjoint.  We set
\begin{equation}
\label{e:f}
f_{\bsa}\coloneq h_{{\boldsymbol{\alpha}}}+\sum_{\lambda\vdash d, \lambda\neq \tau}\, \sum_{\gamma\subsetneq \lambda} b(\lambda, \gamma)\,.
\end{equation}
Observe that the smallest number of variables needed for this construction to be possible for all choices of $\boldsymbol{\alpha}$ depends only on $d$. 
\end{constr}

\begin{ex}
Let $d=3$ and assume $\tau=(1,1,1)$ for a given $\bsa\in \kk^3$. Then 
\begin{align*}
 f_{ {\boldsymbol{\alpha}}}=\alpha_{(1,1,1)}x_1x_2x_3+\alpha_{(2,1)}x_1^2x_2&+\alpha_{(3)}x_1^3+(x_4^3-x_5^3)+(x_6^2x_7-x_8^2x_9)+\\&+(x_{10}^2x_{11}-x_{10}^2x_{12}) (x_{13}^2x_{14}-x_{15}^2x_{14})\,.
 \end{align*}
\end{ex}

\begin{rem}
Recall that, if $m$ is a monomial of type $\l$, then $\rho(m)=M_\l$. It follows that if $b$ is a $\l$-binomial, then $\rho(b)=0$. Further, we have  
\begin{equation}
\label{eq: alpha}
\rho(f_{\bsa})=\rho(h_{\bsa})=\sum_{\l\vdash d} \alpha_\l M_\l
\end{equation}
\end{rem}

\begin{notation}
\label{not: sigma-m}
    For each monomial $m=x_{i_1}^{\tau_1}x_{i_2}^{\tau_2}\dots x_{i_s}^{\tau_s}$ with $\text{type}(m)=\tau$, let  $\sigma_{m}\in \sym_n$ denote a fixed permutation with  
    $\sigma_m(j)= i_j$ for $j\in[s]$, 
    so that $\sigma_m\cdot x^\tau=m$. 

    For any permutation $\l\vdash d$, we let $\mathcal N_\l$ denote the set of all monomials of type $\l$. 
\end{notation}

The next result is a modified version of \cite[Proposition 6.12]{HSS24} and uses \cref{not: sigma-m} and the notation in \eqref{e: Ml}--\eqref{e: ml}.  
\begin{prop}
\label{p:new-example}
Let $\kk$ be a field with ${\rm char}(\kk )\neq 2$. Let $d,r$ be positive integers.  Let $n$ be a positive integer that is  sufficiently large, as needed for \cref{c:f}. 

Let ${\boldsymbol{\alpha}}=(\alpha_\l)_{\l\vdash d}\in \kk^{P(d)}$ nonzero, and let $f_{\bsa}\in R = \kk[x_1,\cdots, x_n]$ and $\tau\vdash d$ as in \cref{c:f},
 and set  $I=(f_{\bsa})_{\sym_n}$. The following hold:
\begin{enumerate}
\item The principal symmetric ideal $I$ generated by $f_{\bsa}$ may be computed as follows: 
\begin{equation}
\label{eq:gens large f}
I=\sum_{m\in \mathcal N_\tau}(\sigma_m\cdot h_{{\boldsymbol{\alpha}}})+ \sum_{\lambda\vdash d, \lambda\neq \tau}\,\sum_{\gamma\subsetneq \lambda} \left(b(\lambda, \gamma)\right)_{\sym_n}
\end{equation}
where $\mathcal N_\tau$ denotes the set of all monomials of type $\tau$. 
\item The minimal number of generators of $I$ is given by
\[
\dim_\kk I_d= \dim_\kk R_d-(P(d)-1)\,.
\]
\item The component of the Macaulay inverse system  of $I$ in degree $(-d)$ can be computed as 
\[
(I^{-1})_{-d}=\Span_\kk(\alpha_\tau m_\l-\alpha_\l m_\tau \mid \l\vdash d, \l\neq \tau).\]
If $\ch \kk>n$ or $\ch \kk=0$, we also have
\[
(I^{-1})_{-d}=\left(\Span_\kk\sum_{\l\vdash d} \alpha_\l M_\l\right)^\perp
\]
\end{enumerate} 
\end{prop}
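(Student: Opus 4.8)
The strategy is to leverage \cref{c:f} and the already-established properties of the binomials $b(\lambda,\gamma)$. For part (1), I would first observe that $f_{\bsa}=h_{\bsa}+\sum_{\lambda\ne\tau}\sum_{\gamma\subsetneq\lambda}b(\lambda,\gamma)$, so $I=(f_{\bsa})_{\sym_n}$ certainly contains $\sigma\cdot f_{\bsa}$ for all $\sigma$. The key point is a ``separation of supports'' argument: since the supports of the binomials $b(\lambda,\gamma)$ are pairwise disjoint and disjoint from $\supp(h_{\bsa})$, one can apply suitable permutations $\sigma$ to $f_{\bsa}$ and take $\kk$-linear combinations to isolate each summand. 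Concretely, picking $\sigma$ that fixes $\supp(h_{\bsa})$ and moves each $b(\lambda,\gamma)$ to a region of variables disjoint from its original support (possible since $n$ is large), the differences $\sigma\cdot f_{\bsa}-f_{\bsa}$ produce elements lying in $\sum_{\lambda\ne\tau}\sum_{\gamma}(b(\lambda,\gamma))_{\sym_n}$; iterating and using that these binomials sit in ``independent'' variable sets lets one extract $h_{\bsa}$ itself, hence all of $\sigma_m\cdot h_{\bsa}$, and separately each $(b(\lambda,\gamma))_{\sym_n}$. This is essentially the argument of \cite[Proposition 6.12]{HSS24} adapted to the modified construction; the main care needed is verifying that the admissibility condition \eqref{e:T} guarantees the orbit of each $b(\lambda,\gamma)$ under $\sym_n$ is large enough that the extraction does not accidentally collapse degrees. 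Conversely, $\eqref{eq:gens large f}\subseteq I$ because each $\sigma_m\cdot h_{\bsa}=\sigma_m\cdot(f_{\bsa}-\sum b(\lambda,\gamma))$ differs from an element of the orbit of $f_{\bsa}$ by elements of the ideals $(b(\lambda,\gamma))_{\sym_n}$, which are themselves contained in $I$ by the extraction just described. I expect the bookkeeping of which permutations to apply, and confirming $n$ large enough makes all the needed disjoint copies available, to be the main obstacle, but it is of the same flavor as the cited argument.

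\textbf{Parts (2) and (3).} For the Macaulay inverse system in degree $-d$, note that a form $g\in S_{-d}$ lies in $(I^{-1})_{-d}$ iff $f\circ g=0$ for all $f\in I_d$. Using the generators from \eqref{eq:gens large f}: the ideals $(b(\lambda,\gamma))_{\sym_n}$ contribute, in degree $d$, all $\lambda$-binomials $m-m'$ (their orbits span all such binomials for a fixed type $\lambda\ne\tau$), so $g=\sum_\lambda\theta_\lambda m_\lambda$ (written in the $m_\lambda$ basis of $S'_{-d}$, noting $g$ need not a priori be invariant — here one must argue $(I^{-1})_{-d}\subseteq S'_{-d}$, which follows because $I$ is symmetric hence $I^{-1}$ is symmetric) must satisfy $(m-m')\circ g=0$, forcing the coefficient of $g$ on every monomial of type $\lambda$ to be equal; equivalently $g$ is a $\kk$-combination of the $m_\lambda$. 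Then the remaining generators $\sigma_m\cdot h_{\bsa}=\sum_\lambda\alpha_\lambda(\sigma_m\cdot x^\lambda)$ impose, via $\circ$, the single linear condition $\sum_\lambda\alpha_\lambda\theta_\lambda=0$ on the coefficients $\theta_\lambda$ of $g$ (using that $(\sigma_m\cdot x^\lambda)\circ m_\nu=\delta_{\lambda\nu}$ after noting $m_\nu=\sum$ over monomials of type $\nu$ and exactly one matches). The solution space to $\sum_\lambda\alpha_\lambda\theta_\lambda=0$ inside $\Span_\kk(m_\lambda)_{\lambda\vdash d}$ is exactly $\Span_\kk(\alpha_\tau m_\lambda-\alpha_\lambda m_\tau\mid\lambda\ne\tau)$, since $\alpha_\tau\ne 0$. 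This proves the first displayed formula in (3); it also gives $\dim_\kk(I^{-1})_{-d}=P(d)-1$, whence by \eqref{eq:GIperp} $\dim_\kk I_d=\dim_\kk R_d-(P(d)-1)$, which is (2). Finally, when $\ch\kk>n$ or $\ch\kk=0$, \cref{l:Mm} identifies $\{m_\lambda\}$ and $\{M_\lambda\}$ as dual bases, so $(I^{-1})_{-d}=\{g\in S'_{-d}: (\sum_\lambda\alpha_\lambda M_\lambda)\circ\text{(pairing)}\}$ — more precisely the condition $\sum\alpha_\lambda\theta_\lambda=0$ is exactly orthogonality to $\sum_\lambda\alpha_\lambda M_\lambda\in R'_d$ under $\circ$, giving $(I^{-1})_{-d}=(\Span_\kk\sum_\lambda\alpha_\lambda M_\lambda)^\perp$ as in \cref{not: U perp}. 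The main subtlety here is justifying that the orbits of the chosen admissible binomials really do span \emph{all} $\lambda$-binomials in degree $d$ for each $\lambda\ne\tau$ — this is where \cref{rem: decompose-admissible} (every $\lambda$-binomial decomposes into admissible ones) and the transitivity of the $\sym_n$-action on monomials of a fixed type are used.
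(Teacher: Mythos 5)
Your overall architecture matches the paper's, but the crucial step of part (1) --- actually extracting each $b(\lambda,\gamma)$ from $f_{\bsa}$ --- is left as ``bookkeeping,'' and the mechanism you sketch does not accomplish it. Moving a binomial's support to a fresh disjoint region by a permutation $\sigma$ and forming $f_{\bsa}-\sigma\cdot f_{\bsa}$ only produces the difference $b(\lambda,\gamma)-\sigma\cdot b(\lambda,\gamma)$, and iterating this only ever yields differences of translates, never $b(\lambda,\gamma)$ itself; note also that your plan never invokes the hypothesis $\ch\kk\ne 2$, which signals that something is missing. The paper's device is to exploit admissibility directly: for $b(\lambda,\gamma)=x_{i_1}^{\lambda_1}\cdots x_{i_s}^{\lambda_s}-x_{j_1}^{\lambda_1}\cdots x_{j_s}^{\lambda_s}$ satisfying \eqref{e:T}, the involution $\sigma=\prod_{t\notin T}(i_t,j_t)$ swaps the two monomials, so $\sigma\cdot b(\lambda,\gamma)=-b(\lambda,\gamma)$, while the disjoint-support condition makes $\sigma$ fix $h_{{\boldsymbol{\alpha}}}$ and every other binomial; hence $b(\lambda,\gamma)=\tfrac12\left(f_{\bsa}-\sigma\cdot f_{\bsa}\right)\in I$. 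That is the step you need to supply, and it is where $\ch\kk\ne 2$ enters. The rest of your part (1) --- subtracting the binomials to recover $h_{{\boldsymbol{\alpha}}}$, and writing $\sigma\cdot h_{{\boldsymbol{\alpha}}}-\sigma_m\cdot h_{{\boldsymbol{\alpha}}}$ as a sum of $\lambda$-binomials with $\lambda\ne\tau$ via \cref{rem: decompose-admissible} --- is exactly the paper's argument.

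For (2) and (3) you reverse the paper's order: you compute $(I^{-1})_{-d}$ as a solution space and then read off $\dim_\kk I_d$ by duality, whereas the paper proves (2) first via the direct-sum decomposition of $I_d$ into the binomial part and the span of the $\sigma_m\cdot h_{{\boldsymbol{\alpha}}}$, and then gets (3) from one inclusion plus a dimension count. Your route is legitimate, but two points need repair. First, the parenthetical claim that $(I^{-1})_{-d}\subseteq S'_{-d}$ ``because $I$ is symmetric hence $I^{-1}$ is symmetric'' is not a valid justification: a $\sym_n$-stable subspace need not consist of invariant elements ($S_{-d}$ itself is stable but not invariant). Second, the binomial generators only control the types $\lambda\ne\tau$; to force the coefficients of $g$ on type-$\tau$ monomials to be equal you must also use the elements $\sigma_m\cdot h_{{\boldsymbol{\alpha}}}-\sigma_{m'}\cdot h_{{\boldsymbol{\alpha}}}$, which equal $\alpha_\tau(m-m')$ modulo binomials of types $\lambda\ne\tau$ and hence (as $\alpha_\tau\ne0$) place all $\tau$-binomials in $I_d$ as well. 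With those repairs, your linear algebra --- the single condition $\sum_\lambda\alpha_\lambda\theta_\lambda=0$ on $S'_{-d}$, its solution space $\Span_\kk(\alpha_\tau m_\lambda-\alpha_\lambda m_\tau \mid \lambda\ne\tau)$, the identification with $\left(\Span_\kk\sum_\lambda\alpha_\lambda M_\lambda\right)^\perp$ via \cref{l:Mm}, and the deduction of (2) from \eqref{eq:GIperp} --- is correct.
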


 \begin{proof} 
We begin with (1). We first show 
\begin{equation}
\label{eq:gens I}
I=(h_{{\boldsymbol{\alpha}}})_{\sym_n}+ \sum_{\lambda\vdash d, \lambda\neq \tau}\,\sum_{\gamma\subsetneq \lambda} \left(b(\lambda, \gamma)\right)_{\sym_n}.
\end{equation}
Since the LHS of~\eqref{eq:gens I} clearly lies in the RHS, it suffices to show the opposite inclusion, namely, that the RHS of~\eqref{eq:gens I} lies in the LHS. In particular, 
it suffices to show that each summand in the RHS belongs to $I$.  To see this, fix a partition $\l\vdash d$, $\l\ne (d)$ with $s$ parts and $\gamma\subsetneq \lambda$ and let 
\[
b(\lambda, \gamma)=x_{i_1}^{\lambda_1} \cdots x_{i_s}^{\lambda_s} - x_{j_1}^{\lambda_1} \cdots  x_{j_s}^{\lambda_s}\,.
\]
such that $T=T(\lambda, \gamma)$ satisfies \eqref{e:T}. In particular, $i_t=j_t$ for $t\in T$ and  $i_t\ne j_t$ for $t\notin T$. Define the permutation
\[\sigma :=\prod_{t\not \in T}(i_t,  j_t)\,.\] 
By the assumptions on $T=T(\l,\gamma)$ in \eqref{e:T}, it follows that the transpositions $(i_t,j_t)$  commute with each other, and thus the multiplication order in $\sigma$ is irrelevant. 
By construction of $f$, each variable that appears in $b(\lambda, \gamma)$ does not appear in any $b(\lambda',\gamma')$ with $(\lambda, \gamma)\ne (\lambda',\gamma')$ or in $h_{{\boldsymbol{\alpha}}}$, and it follows that $\sigma$ fixes $b(\l',\gamma')$ and $h_{{\boldsymbol{\alpha}}}$. On the other hand, observe that 
\[
\sigma\cdot b(\l,\gamma)=-b(\l,\gamma)\,.
\]
 From this we conclude
\[
b(\l,\gamma)=\frac{1}{2}\left(f_{ {\boldsymbol{\alpha}}}-\sigma \cdot f_{ {\boldsymbol{\alpha}}} \right)\in I.
 \]
 Since $I$ is $\sym_n$-invariant, if one binomial is contained in $I$ then any permutation of it also lies in $I$, so we conclude $(b(\lambda,\gamma))_{\sym_n}$ is contained in $I$, as desired. Moreover, by subtracting binomials from $f_{ {\boldsymbol{\alpha}}}$, we may also conclude that $h_{{\boldsymbol{\alpha}}}\in I$, completing the proof of \eqref{eq:gens I}.  To complete the proof of \eqref{eq:gens large f}, it suffices to show
 \[
 (h_{{\boldsymbol{\alpha}}})_{\sym_n}\subseteq \sum_{m\in \mathcal N_\tau}(\sigma_m\cdot h_{\bsa})+\sum_{\lambda\vdash d, \lambda\neq \tau}\,\sum_{\gamma\subsetneq \lambda} \left(b(\lambda, \gamma)\right)_{\sym_n}\,.
 \]
 We need to show that $\sigma\cdot h_{ {\boldsymbol{\alpha}}}$ is in RHS for all $\sigma\in \sym_n$. Indeed, let $\sigma\in \sym_n$ and set $m=\sigma\cdot x^\tau$. Since $\sigma\cdot x^\tau=\sigma_m\cdot x^\tau$ and in view of \cref{rem: decompose-admissible},  we have 
 \begin{align*}
 \sigma\cdot h_{{\boldsymbol{\alpha}}}-\sigma_m\cdot h_{{\boldsymbol{\alpha}}}&=\sum_{\l\vdash d}\alpha_\l \left(\sigma\cdot x^\l-\sigma_m\cdot x^\l \right)=\sum_{\l\vdash d, \l\ne \tau}\alpha_\l \left(\sigma\cdot x^\l-\sigma_m\cdot x^\l \right)
 \\
 & \in \sum_{\lambda\vdash d, \lambda\neq \tau}\,\sum_{\gamma\subsetneq \lambda} \left(b(\lambda, \gamma)\right)_{\sym_n}\,.
 \end{align*}
 This finishes the proof of (1). 

We now prove (2). For each $\l\vdash d$, we let $\mathcal M_\l$ denote the $\kk$-span of all monomials of type $\l$. 
We have
\[
\dim_\kk\left( \sum_{\scriptscriptstyle\gamma\subsetneq \lambda} \left( b(\l,\gamma)\right)_{\sym_n}\right)_d=\dim_\kk \mathcal M_\l -1.
\]
For a justification of this identity, consult the proof of \cite[Proposition 6.12]{HSS24}. Since the argument is identical to the one used there, we do not repeat it here. 

Set $C\coloneq\left(\sum_{\lambda\vdash d, \lambda\neq \tau}\,\sum_{\gamma\subsetneq \lambda} \left(b(\lambda, \gamma)\right)_{\sym_n}\right)_d$ and $D\coloneq\sum_{m\in \mathcal N_\tau}\langle \sigma_m\cdot h_{{\boldsymbol{\alpha}}}\rangle$. 

Since $R_d=\bigoplus_{\lambda \vdash d} \mathcal M_\lambda$, we have: 
\begin{align}
\label{eq:part}
\begin{split}
\dim_\kk C=&\sum_{\lambda\vdash d, \lambda\neq \tau} (\dim_\kk \mathcal M_\l-1)\\&=\left(\sum_{\lambda\vdash d, \lambda\neq \tau} \dim_\kk \mathcal M_\l\right) -(P(d)-1)\\&=\dim_\kk R_d-\dim_\kk \mathcal M_\tau-(P(d)-1)\,.
\end{split}
\end{align}
Further, using \eqref{e:h} observe that the elements $\sigma_m\cdot h_{ {\boldsymbol{\alpha}}}$ with $m\in \mathcal N_\tau$ are linearly independent, since each element $\sigma_m\cdot h_{ {\boldsymbol{\alpha}}}$ contains in its support exactly one element of type $\tau$, namely $m$. Thus, the span $D$ of these elements has dimension equal to $|\mathcal N_\tau|=\dim_\kk \mathcal M_\tau$, and $C\cap D=0$, since all nonzero elements of $D$ contain in their support a monomial of type $\tau$, while a nonzero element of $C$ does not contain any monomial of type $\tau$ in its support. 
Consequently, the decomposition $I_d=C+D$ given by \eqref{eq:gens large f} is a direct sum. Using \eqref{eq:part}, we obtain
\[
\dim_\kk I_d=\left(\dim_\kk R_d-\dim_\kk \mathcal M_\tau-(P(d)-1)\right)+\dim_\kk \mathcal M_\tau=\dim_\kk R_d-(P(d)-1)\,.
\]

Finally, we prove (3). The inclusion $\Span_\kk(\alpha_\tau m_\l-\alpha_\l m_\tau \mid \l\vdash d, \l\neq \tau) \subseteq  (I^{-1})_{-d}$ can be seen as follows.  In view of~\eqref{eq:gens large f}, it suffices  to show $h_{ {\boldsymbol{\alpha}}}\circ (\alpha_\tau m_\l-\alpha_\l m_\tau)=0$ and $b(\l,\gamma)\circ (\alpha_\tau m_\l-\alpha_\l m_\tau)  =0$ for all $\l, \gamma$ with $\l\vdash d$, $\l\ne \tau$ and $\gamma\subsetneq \lambda$. Both these equations follow from the following observation: 
If $m\in R$ is a monomial of type $\lambda'$ for some $\lambda'\vdash d$, then 
$$
m\circ m_\lambda =\begin{cases}
0 &\text{if $\lambda'\ne \lambda$}\\
1 &\text{if $\lambda'=\lambda$.}
\end{cases}
$$
Next we wish to show the containment $(I^{-1})_{-d} \subseteq  \left\langle \alpha_\tau m_\l-\alpha_\l m_\tau \mid \l\vdash d, \l\neq \tau\right\rangle $. To this extent, it suffices to show that the two vector spaces have the same dimension.  Indeed, using part (2) and observing that the elements $\alpha_\tau m_\l-\alpha_\l m_\tau$ with $\l\vdash d$, $\l\ne\tau$ are linearly independent, we have
\begin{align*}
\dim_\kk(I^{-1})_{-d}&=\dim_\kk(R/I)_d=\dim_\kk R_d-\dim_\kk I_d=P(d)-1\\
&=\dim_\kk\Span_\kk(\alpha_\tau m_\l-\alpha_\l m_\tau \mid \l\vdash d, \l\neq \tau).
\end{align*}

Assume now $\ch \kk>n$ or $\ch \kk=0$. Since $\alpha_\tau\ne 0$, observe that 
 \begin{equation*}\label{eq: def W} 
 \left(\Span_\kk\sum_{\l\vdash d} \alpha_\l M_\l \right)^\perp=\Span_\kk\left(\alpha_\tau m_\l-\alpha_\l m_\tau \mid \l\vdash d, \l\neq \tau\right). \qedhere
 \end{equation*} 
\end{proof}

\begin{prop}
\label{p:from W to f}
Let $d,r$ be positive integers with $1\le r\le P(d)$. Assume $n$ is sufficiently large so that \cref{c:f} can be done and $\ch\kk>n$ or $\ch \kk=0$. 
If $\bsa^1, \dots, \bsa^r$ are  elements in $\kk^{P(d)}$ so that \[
U=\Span_\kk\left(\sum_{\l\vdash d}\bsa^1_\l M_\l, \ldots,  \sum_{\l\vdash d}\bsa^r_\l M_\l\right) \in \Gr(r,R'_{d}),
\]
then 
\[
\left[(f_{\bsa^1}, \dots, f_{\bsa^r})_{\sym_n}\right]_d=\left[\Ann_R(U^\perp)\right]_d\,.
\]
\end{prop}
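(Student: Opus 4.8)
The plan is to identify both sides of the asserted identity as subspaces of $R_d$ and compare them directly. Write $J=(f_{\bsa^1},\dots,f_{\bsa^r})_{\sym_n}$ and, for each $i$, let $\tau^i\vdash d$ be the leading partition attached to $\bsa^i$ in \cref{c:f}. By \cref{lem: perp map} we have $[\Ann_R(U^\perp)]_d=\{f\in R_d:\rho(f)\in U\}$, so it suffices to prove $J_d=\{f\in R_d:\rho(f)\in U\}$. The inclusion ``$\subseteq$'' is immediate: $J_d$ is spanned by the forms $\sigma\cdot f_{\bsa^i}$ with $\sigma\in\sym_n$ and $1\le i\le r$, and $\rho(\sigma\cdot f_{\bsa^i})=\rho(f_{\bsa^i})=\sum_{\l\vdash d}\bsa^i_\l M_\l\in U$ by \eqref{eq: alpha}, while $\rho$ is $\kk$-linear and $U$ is a subspace.

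For ``$\supseteq$'' I would first strip the invariant part off a given $f\in R_d$ with $\rho(f)\in U$. Writing $\rho(f)=\sum_{i=1}^r c_i\bigl(\sum_{\l\vdash d}\bsa^i_\l M_\l\bigr)$ with $c_i\in\kk$ and setting $g=\sum_{i=1}^r c_i h_{\bsa^i}$, we get $\rho(g)=\rho(f)$ by \eqref{eq: alpha}, and $g\in J_d$ because each $h_{\bsa^i}$ lies in $(f_{\bsa^i})_{\sym_n}\subseteq J$ (part of the proof of \cref{p:new-example}(1), cf.~\eqref{eq:gens I}). Hence $f-g\in\ker(\rho|_{R_d})$, and it is enough to show $\ker(\rho|_{R_d})\subseteq J_d$. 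Since $\rho\bigl(\sum_m c_m m\bigr)=\sum_{\l}\bigl(\sum_{\type(m)=\l}c_m\bigr)M_\l$ and the $M_\l$ ($\l\vdash d$) are linearly independent in $R'_d$ (recall $n\ge d$), this kernel equals $\bigoplus_{\l\vdash d}B_\l$, where $B_\l\subseteq R_d$ is the span of all $\l$-binomials $m-m'$ — equivalently, the ``sum-zero'' hyperplane inside the span $\mathcal M_\l$ of the type-$\l$ monomials, so $\dim_\kk B_\l=\dim_\kk\mathcal M_\l-1$.

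It therefore remains to prove $B_\l\subseteq J_d$ for every $\l\vdash d$. If there is an index $i$ with $\tau^i\neq\l$, then by \cref{p:new-example}(1) all the ideals $(b(\l,\gamma))_{\sym_n}$ with $\gamma\subsetneq\l$ are contained in $(f_{\bsa^i})_{\sym_n}\subseteq J$; their degree-$d$ components lie in $B_\l$ and, by the dimension count performed in the proof of \cref{p:new-example}(2), together span a subspace of dimension $\dim_\kk\mathcal M_\l-1=\dim_\kk B_\l$, hence all of $B_\l$. The only partition this does not reach is one equal to $\tau^i$ for every $i$; call this common value $\tau$. In that case the previous step already gives $B_{\l'}\subseteq J_d$ for every $\l'\neq\tau$, and for any two monomials $m,m'$ of type $\tau$ I would pick $\sigma,\sigma'\in\sym_n$ with $\sigma\cdot x^\tau=m$ and $\sigma'\cdot x^\tau=m'$ and compute
\[
\sigma\cdot h_{\bsa^1}-\sigma'\cdot h_{\bsa^1}=\bsa^1_\tau(m-m')+\sum_{\l'\neq\tau}\bsa^1_{\l'}\bigl(\sigma\cdot x^{\l'}-\sigma'\cdot x^{\l'}\bigr).
\]
The left-hand side lies in $J$ (since $h_{\bsa^1}\in J$ and $J$ is $\sym_n$-invariant) and the summation term lies in $\sum_{\l'\neq\tau}B_{\l'}\subseteq J_d$; as $\bsa^1_\tau\neq 0$ this forces $m-m'\in J_d$, and letting $m,m'$ vary gives $B_\tau\subseteq J_d$. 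Combining the two cases yields $\ker(\rho|_{R_d})=\bigoplus_{\l}B_\l\subseteq J_d$, completing the reverse inclusion and hence the proof.

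I expect the main obstacle to be precisely this last case, where all the $\bsa^i$ share the leading partition $\tau$: there $B_\tau$ is not produced by \cref{p:new-example} applied to any single generator $f_{\bsa^i}$, so one must recover the type-$\tau$ binomials from the $\sym_n$-orbit of $h_{\bsa^1}$ while working modulo the binomial spaces $B_{\l'}$ ($\l'\neq\tau$) that are already known to lie in $J$. Everything else is a formal consequence of \cref{lem: perp map}, \eqref{eq: alpha}, and \cref{p:new-example}.
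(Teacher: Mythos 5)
Your proof is correct, but it takes a genuinely different route from the paper's. The paper's argument is a three-line formal reduction to the single-generator case: since $U^\perp=\bigcap_{i=1}^r\bigl(\Span_\kk\sum_{\l\vdash d}\bsa^i_\l M_\l\bigr)^\perp$ and, under the perfect pairing between $R_d$ and $S_{-d}$, the degree-$d$ annihilator of an intersection is the sum of the degree-$d$ annihilators, the claim follows immediately from \cref{p:new-example}(3), which identifies $[\Ann_R((\Span_\kk\sum_\l\bsa^i_\l M_\l)^\perp)]_d$ with $[(f_{\bsa^i})_{\sym_n}]_d$. You instead pass through \cref{lem: perp map} to rewrite the right-hand side as $\{f\in R_d:\rho(f)\in U\}$ and then verify both inclusions by hand, reducing the hard direction to $\ker(\rho|_{R_d})\subseteq J_d$ and establishing that via the decomposition $\ker(\rho|_{R_d})=\bigoplus_\l B_\l$ into spaces of $\l$-binomials. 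This in effect re-proves, in the multi-generator setting, the structural facts that the paper packages into \cref{p:new-example}; your handling of the exceptional partition (when all leading partitions $\tau^i$ coincide) by extracting $\tau$-binomials from $\sigma\cdot h_{\bsa^1}-\sigma'\cdot h_{\bsa^1}$ modulo the already-captured $B_{\l'}$ is sound, since $\bsa^1_\tau\neq 0$. What the paper's route buys is brevity and a clean reduction to $r=1$; what yours buys is an explicit description of $J_d$ as $\rho^{-1}(U)\cap R_d$ and of its "binomial part," making visible exactly which orbits of binomials the symmetrization produces. Both are valid; yours is simply longer because it does not black-box part (3) of \cref{p:new-example}.
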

\begin{proof}
Using \cref{p:new-example} for the third equality below, we have: 
\begin{align*}
[\Ann_R(U^\perp)]_d&=\left[\Ann_R \, \left(\bigcap_{i=1}^r\left(\Span_\kk \sum_{\l\vdash d}\bsa^i_\l M_\l \right )^\perp\right)\right]_d\\
&=\sum_{i=1}^r \left[\Ann_R\left(\left (\Span_\kk  \sum_{\l\vdash d}\bsa^i_\l M_\l \right)^\perp \right)\right]_d\\
&=\sum_{i=1}^r [(f_{\bsa^i})_{\sym_n}]_{d}\\
&=[(f_{\bsa^1}, \dots, f_{\bsa^r})_{\sym_n}]_d\,.\qedhere
\end{align*}
\end{proof}

\begin{thm}
\label{thm: gens}
Let $d$, $r$ be positive integers with $1\le r\le P(d)$ and assume $n$ is sufficiently large so that \cref{c:f} can be done, and assume $\ch\kk>n$ or $\ch \kk=0$. Let $U\in \Gr(r, R'_{d})$ with 
\[
U=\Span_\kk\left(\sum_{\l\vdash d}\bsa^1_\l M_\l, \ldots,  \sum_{\l\vdash d}\bsa^r_\l M_\l \right) 
\]
where $\bsa^i=(\bsa^i_\l)_{\l\vdash d}\in \kk^{P(d)}$. Set $V=\Span_\kk( f_{\bsa^1}, \dots, f_{\bsa^r})$, with $f_{\bsa^i}$ defined as in \cref{c:f}. Let $G$ be the nonempty open subset of $\Gr(r,R'_d)$ satisfying the conclusions of \cref{thm: narrow}. The following then hold: 
\begin{enumerate}
\item $V\in \Gr(r,R_d)$;
\item $U=\langle \rho(f)\colon f\in V\rangle$;
\item If $U\in G$, then $\Ann_{R}(U^\perp+S_{\ge -d+1})=(V)_{\sym_n}$. 
\end{enumerate}
\end{thm}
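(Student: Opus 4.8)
The plan is to reduce all three parts to the Reynolds-operator identity \eqref{eq: alpha}, together with \cref{thm: narrow} and \cref{p:from W to f}. For part (1), I would first note that since $\{M_\l\}_{\l\vdash d}$ is a basis of $R'_d$ and the $r$ elements $\sum_{\l\vdash d}\bsa^i_\l M_\l$ span the $r$-dimensional subspace $U$, these elements form a basis of $U$; equivalently, the vectors $\bsa^1,\dots,\bsa^r\in\kk^{P(d)}$ are linearly independent, so in particular each $\bsa^i$ is nonzero and \cref{c:f} applies to produce $f_{\bsa^i}$. Applying the $\kk$-linear map $\rho\colon R_d\to R'_d$ and invoking \eqref{eq: alpha}, which gives $\rho(f_{\bsa^i})=\sum_{\l\vdash d}\bsa^i_\l M_\l$, one sees that $\rho$ sends the family $(f_{\bsa^i})_i$ to a linearly independent family; hence $(f_{\bsa^i})_i$ is itself linearly independent and $\dim_\kk V=r$, which is (1). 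Part (2) then follows immediately from linearity of $\rho$, since $\langle\rho(f)\colon f\in V\rangle=\Span_\kk\bigl(\rho(f_{\bsa^1}),\dots,\rho(f_{\bsa^r})\bigr)=\Span_\kk\bigl(\sum_{\l\vdash d}\bsa^1_\l M_\l,\dots,\sum_{\l\vdash d}\bsa^r_\l M_\l\bigr)=U$.

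For part (3), write $J=\Ann_R(U^\perp+S_{\ge-d+1})$ and $I=(V)_{\sym_n}$, and recall that $I$ is generated by the $\sym_n$-orbits of the degree-$d$ forms $f_{\bsa^i}$, so $I=(I_d)$. The strategy is to show $J=(J_d)$ and $J_d=I_d$. Generation of $J$ in degree $d$ is exactly \cref{thm: narrow}(3), which applies because $U\in G$. To compare the degree-$d$ components I would first record the elementary observation that $\Ann_R(S_{\ge-d+1})=\m^d$: a form of degree $e\ge d$ contracts every $g\in S_{-j}$ with $j\le d-1$ into $S_{e-j}=0$, whereas perfectness of the pairing $R_e\times S_{-e}\to\kk$ shows the annihilator vanishes in degrees $e\le d-1$. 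Consequently $J=\Ann_R(U^\perp)\cap\m^d$ and therefore $J_d=[\Ann_R(U^\perp)]_d$, which by \cref{p:from W to f} equals $[(f_{\bsa^1},\dots,f_{\bsa^r})_{\sym_n}]_d=I_d$. Combining, $J=(J_d)=(I_d)=I$.

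Essentially all the substance is imported from earlier: \eqref{eq: alpha} does (1) and (2), while \cref{thm: narrow}(3) (generation in degree $d$) and \cref{p:from W to f} (the degree-$d$ ideal attached to $U^\perp$) do the heavy lifting in (3). The only genuinely new point is that the summand $S_{\ge-d+1}$ is invisible to the annihilator in degree $d$, which is what bridges \cref{p:from W to f}, a statement about $\Ann_R(U^\perp)$, to the statement we want about $\Ann_R(U^\perp+S_{\ge-d+1})$; I expect this to be the only step needing any care, and it is routine. The one thing to watch is consistency of the large-$n$ hypotheses: $n$ must be large enough for \cref{c:f} to be carried out for each of $\bsa^1,\dots,\bsa^r$, and large enough that the open set $G$ of \cref{thm: narrow} is defined, but since $G$ does not depend on $n$ once $n\ge d$ and \cref{c:f} only requires $n$ large in terms of $d$, there is no circularity.
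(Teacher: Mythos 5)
Your proposal is correct and follows essentially the same route as the paper: parts (1) and (2) via the identity $\rho(f_{\bsa^i})=\sum_{\l\vdash d}\bsa^i_\l M_\l$, and part (3) by combining \cref{p:from W to f} (equality in degree $d$) with \cref{thm: narrow}(3) (generation in degree $d$). The only difference is that you make explicit the small bridging step $[\Ann_R(U^\perp+S_{\ge -d+1})]_d=[\Ann_R(U^\perp)]_d$, which the paper leaves implicit; your justification of it is correct.
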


\begin{proof}
(1) Since $U\in \Gr(r,R'_d)$, the vectors $\bsa^1, \dots,\bsa^r$ are linearly independent. It follows then from the definition of $f_{\bsa^i}$ that $f_{\bsa^1}, \dots, f_{\bsa^r}$ are linearly independent, and hence $\dim_\kk V=r$. 

(2) Observe that $\rho(f_{\bsa^i})=\sum_{\l\vdash d} \bsa^i_\l M_\l$ by \eqref{eq: alpha}. 

(3) By \cref{p:from W to f}, the ideals $(V)_{\sym_n}$ and $\Ann_{R}(U^\perp+S_{\geqslant -d+1})$ are equal in degree $d$. By \cref{thm: narrow}(3), the second ideal is also generated in degree $d$ when $U\in G$, and thus the ideals must be equal.
\end{proof}

\section{Two parametrizations for general $(r,d)$-symmetric ideals}\label{s: diagram}

Suppose $\kk$ is  infinite  with ${\rm char}(\kk)=0$ or $\ch(\kk)>n$. Fix positive integers $d$ and $r$. The goal of this section is to precisely define the notion of general $(r,d)$-symmetric ideals and deduce their main properties in \cref{thm: main}.

Towards this end, recall the map $\Phi: \Gr(r,R_d) \to \{$symmetric $(r,d)$-ideals$\}$ introduced in \eqref{Grassmannian intro}, which parametrizes our ideals of interest in terms of points in a Grassmannian variety. Then we consider a specific nonempty Zariski open set of this Grassmannian such that the ideals parametrized by points in that open set are our general $(r,d)$-symmetric ideals. 

Recall that the above-mentioned Grassmannian embeds into $\P(\bigwedge ^r (\kk^N))$ via the Pl\"ucker map. To describe this embedding consider the rational map
\begin{equation}\label{eq: matrix to Grassmannian}
\pi: \P:=\prod_{i=1}^r \P(R_d) \dashrightarrow \P \left(\bigwedge^r R_d \right)
\end{equation}
given by $\pi(v_1, \ldots, v_r)=v_1\wedge \cdots \wedge v_r$. This  map is defined on the open set $\mathcal{U}$  where the vectors $v_1,\ldots, v_r$ are linearly independent and its image is $\Gr(r,R_d)$.

\begin{lem}\label{lem: previous}
The following  is an open subset of $\Gr(r,R_d)$
\[
\widetilde O:=\left\{V\in \Gr(r,R_d)\colon \dim_\kk[(V)_{\sym_n}]_d\ge \dim_\kk R_d-\max\{P(d)-r,0\}\right\}.
\]
If $n$ is sufficiently large so that \cref{c:f} can be done, then $\widetilde{O}$ is nonempty. 
\end{lem}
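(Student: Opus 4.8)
The plan is to prove openness and nonemptiness separately; openness holds for every $n$, and it is nonemptiness where the hypothesis on $n$ enters.

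\emph{Openness.} First I would record that for any subspace $V\subseteq R_d$ one has $[(V)_{\sym_n}]_d=\sum_{\sigma\in\sym_n}\sigma\cdot V$, since an ideal generated entirely in degree $d$ has degree-$d$ component equal to the $\kk$-span of its generators. Thus $\widetilde O$ is exactly the locus in $\Gr(r,R_d)$ where the function $V\mapsto \dim_\kk\sum_{\sigma}\sigma V$ is at least $k:=\dim_\kk R_d-\max\{P(d)-r,0\}$. To see this locus is open, let $\mathcal S\subseteq R_d\otimes\mathcal O_{\Gr(r,R_d)}$ be the tautological rank-$r$ subbundle and consider the morphism of vector bundles $\bigoplus_{\sigma\in\sym_n}\mathcal S\to R_d\otimes\mathcal O$ whose restriction to the $\sigma$-th summand is $\mathcal S\hookrightarrow R_d\otimes\mathcal O\xrightarrow{\ \sigma\ }R_d\otimes\mathcal O$. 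Its image over a point $V$ is $\sum_\sigma\sigma V$, so its rank there equals $\dim_\kk\sum_\sigma\sigma V$; the locus where a morphism of vector bundles has rank $\ge k$ is open, being the non-vanishing locus of the $k\times k$ minors of a local matrix, and this is precisely $\widetilde O$. Equivalently, one can pull back along the rational map $\pi$ of \eqref{eq: matrix to Grassmannian} and note that on $\mathcal U$ the matrix with columns $\sigma\cdot v_i$ has entries linear in the coordinates of $v_1,\dots,v_r$.

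\emph{Nonemptiness.} Assume $n$ is large enough that \cref{c:f} can be carried out (in particular $n\ge d$, so $P_n(d)=P(d)$). If $r\le P(d)$, I would pick any $U\in\Gr(r,R'_d)$, write $U=\Span_\kk\bigl(\sum_{\l\vdash d}\bsa^i_\l M_\l:1\le i\le r\bigr)$ with $\bsa^i\in\kk^{P(d)}$ linearly independent, and set $V=\Span_\kk(f_{\bsa^1},\dots,f_{\bsa^r})$ as in \cref{c:f}. By \cref{thm: gens}(1) we have $V\in\Gr(r,R_d)$, and by \cref{p:from W to f}, $[(V)_{\sym_n}]_d=[\Ann_R(U^\perp)]_d$. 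Since a degree-$d$ form of $R$ contracts every element of $S_{\ge -d+1}$ to $0$ for degree reasons, $[\Ann_R(S_{\ge -d+1})]_d=R_d$, hence $[\Ann_R(U^\perp)]_d=[\Ann_R(U^\perp+S_{\ge -d+1})]_d$; applying \cref{prop: narrow}(2) to a basis of the $(P_n(d)-r)$-dimensional space $U^\perp$ gives that this dimension equals $\dim_\kk R_d-(P_n(d)-r)=\dim_\kk R_d-(P(d)-r)=k$. Thus $V\in\widetilde O$. If instead $r>P(d)$, I would first run this argument with $U=R'_d$ (so $U^\perp=0$) to obtain $V_0\in\Gr(P(d),R_d)$ with $[(V_0)_{\sym_n}]_d=[\Ann_R(0)]_d=R_d$, and then extend $V_0$ to any $V\in\Gr(r,R_d)$ with $V\supseteq V_0$ (possible since $P(d)\le\dim_\kk R_d$ and $\Gr(r,R_d)\ne\emptyset$): then $\sum_\sigma\sigma V\supseteq\sum_\sigma\sigma V_0=R_d$, so $[(V)_{\sym_n}]_d=R_d$, which is $\ge k$ because $k=\dim_\kk R_d$ in this case.

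The only point requiring care is the dimension bookkeeping when $r\le P(d)$: one must chain \cref{thm: gens}/\cref{p:from W to f} with \cref{prop: narrow}(2) and observe that adjoining $S_{\ge -d+1}$ to $U^\perp$ does not change the degree-$d$ annihilator. The openness statement is routine semicontinuity and the case $r>P(d)$ is an immediate extension argument, so I do not anticipate real difficulty there.
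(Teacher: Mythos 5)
Your proof is correct and follows essentially the same route as the paper's: openness is the standard semicontinuity of the rank of $V\mapsto\sum_\sigma\sigma V$ (the paper phrases this via minors of the coefficient matrix and descent from $\prod\P(R_d)$, you via the tautological bundle), and nonemptiness comes from the explicit $V=\Span_\kk(f_{\bsa^1},\dots,f_{\bsa^r})$ of \cref{c:f} together with the dimension count from \cref{p:from W to f} and \cref{prop: narrow}(2). The only nitpick is the boundary case $r=P(d)$, where $U^\perp=0$ so \cref{prop: narrow} (which assumes $a\ge 1$) does not formally apply — but there $[\Ann_R(U^\perp)]_d=R_d$ trivially and $\max\{P(d)-r,0\}=0$, so the conclusion still holds.
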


\begin{proof}
    Let $A$ be an $N\times r$ matrix associated to $V\in \widetilde O$, where $N=\dim_\kk R_d$. More precisely, if $A=(a_{i,j})_{1\le j\le N,1\le i\le r}$, then $f_j=\sum_i a_{i,j}\mu_i$ describes a polynomial in $R_d$, where $\mu_1, \dots, \mu_N$ denote the monomial generators of $R_d$, listed in a fixed order, such that $f_1, \dots, f_r$ is a basis of $V$. Then consider the $N\times (r\cdot n!)$ matrix $A'$ obtained from the coefficients of the polynomials $\sigma\cdot f_i$ with $\sigma\in \sym_n$ and $1\le i\le r$ with respect to the chosen monomial basis. Set $M=\dim_\kk R_d-\max\{P(d)-r,0\}$. Since $\widetilde O$ is nonempty it follows that $M \leq r\cdot n!$ as there is at least one such $N\times (r\cdot n!)$ matrix of rank $M$. The set of those $V$ satisfying $\dim_\kk[(V)_{\sym_n}]_d<M$ is described by the vanishing of the $M\times M$ minors of $A'$. Since the minors are homogeneous polynomials in the coefficients of the matrix $A$, this set is closed in the projective space $\P$ of \eqref{eq: matrix to Grassmannian}, and thus its complement $O'$ is open. 
 
    It remains to show that the image of $O'$ under the map $\pi$ \eqref{eq: matrix to Grassmannian} is a Zariski open subset of the Grassmannian. Indeed, since the Grassmannian $\Gr(r,R_d)$ is a geometric quotient of $\P$, for any subset $O'$ of $\P$ the image $\widetilde{O}=\pi(O')\subseteq \Gr(r,R_d)$ is open if and only if $O'$ is open.  (This can also be seen directly by noticing that if we invert a maximal minor of an $r\times N$ matrix $A$, then one may assume that this minor is the identity matrix, hence locally the map $\pi$ is a projection, which is a continuous, open map.) 
    
    Assume now $n$ is sufficiently large, so that the conclusions of \cref{thm: gens} hold. When $r<P(d)$, the fact that $\widetilde{O}$ is nonempty is a consequence of \cref{thm: gens}(3) and \cref{thm: narrow}.  If $r\ge P(d)$, then one can take $V$ so that $V\supseteq \Span_\kk(x^\l\colon \l\vdash d)$, and see that $[(V)_{\sym_n}]_d=R_d$,  hence $V\in \widetilde O$. 
\end{proof}

\begin{rem}
\label{rem:Noah}
 An effective version for the last statement of \Cref{lem: previous} can be deduced from \cite[Proposition 2.10 and Theorem 4.8]{Walker}. It follows from the cited results that when $n>d$ the set $\widetilde{O}$ is nonempty when 
 \begin{equation}
 \label{e:n-Noah}
     n\ge  1+\frac{1}{r} \sum_{i=0}^{d-1} P(i).
\end{equation}
Furthermore, when $r\le P(d)$, the set $\widetilde{O}$ is nonempty if and only if \eqref{e:n-Noah} holds. In particular, \cref{lem: previous} shows that if $n$ is sufficiently large so that \cref{c:f} can be done, then \eqref{e:n-Noah} must hold.
\end{rem}

Recall from \eqref{Grassmannian intro} the map
 \begin{equation}\label{alpha map new}
\alpha: \Gr(r,R_d)\to \bigcup_{0\leq i\leq r} \Gr(i,R'_d)
\qquad \alpha(V):=\Span_{\kk}(\rho(f) : f\in V). 
\end{equation}

 \begin{lem}
\label{lem: alpha-continuous}
Assume $r<P(d)$. If $G$ is an open subset of $\Gr(r,R'_d)$ then $\alpha^{-1}(G)$ is a nonempty open subset of  $\Gr(r,R_d)$. 
\end{lem}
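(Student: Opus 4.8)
The plan is to treat the two assertions separately: that $\alpha^{-1}(G)$ is open, which requires some care because $\alpha$ is only a rational map from $\Gr(r,R_d)$ to $\Gr(r,R'_d)$, and that it is nonempty, which is formal once one notices that $\alpha$ admits an obvious section.

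For openness, the point is that $\alpha(V)=\Span_\kk(\rho(f):f\in V)$ can fail to be $r$-dimensional only on a closed locus. Writing $K=\ker(\rho\colon R_d\to R'_d)$, I would set $\Omega=\{V\in\Gr(r,R_d):V\cap K=0\}$, which is open since the complementary locus $\{V:V\cap K\neq 0\}$, where $V$ meets the fixed subspace $K$ nontrivially, is closed. On $\Omega$ the assignment $V\mapsto\rho(V)$ is a genuine morphism into $\Gr(r,R'_d)$: in Pl\"ucker coordinates it is the restriction of the linear projection $\P\big(\bigwedge^rR_d\big)\dashrightarrow\P\big(\bigwedge^rR'_d\big)$ induced by $\bigwedge^r\rho$, which is regular away from $\P(\ker\bigwedge^r\rho)$, and a decomposable vector $v_1\wedge\cdots\wedge v_r$ lies in $\ker\bigwedge^r\rho$ exactly when $\Span_\kk(v_1,\dots,v_r)$ meets $K$ nontrivially. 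Since every point of $G$ is an $r$-dimensional subspace, $\alpha^{-1}(G)\subseteq\Omega$, so $\alpha^{-1}(G)=(\alpha|_\Omega)^{-1}(G)$ is open in $\Omega$ by continuity of $\alpha|_\Omega$, and hence open in $\Gr(r,R_d)$.

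For nonemptiness I would use that the Reynolds operator (available since $\ch\kk=0$ or $\ch\kk>n$) retracts $R$ onto $R'$, so $\rho|_{R'_d}=\mathrm{id}_{R'_d}$. The inclusion $R'_d\hookrightarrow R_d$ induces a closed embedding $\iota\colon\Gr(r,R'_d)\hookrightarrow\Gr(r,R_d)$ with image contained in $\Omega$, and $\alpha(\iota(U))=\rho(U)=U$ for every $U\in\Gr(r,R'_d)$; thus $\alpha\circ\iota=\mathrm{id}_{\Gr(r,R'_d)}$. Hence $\iota(G)\subseteq\alpha^{-1}(G)$, and since $G$ is nonempty this gives the claim. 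The only step that demands actual verification is that $\alpha$ restricts to a morphism on $\Omega$ rather than a mere map of sets; this is the familiar fact that a linear map of finite-dimensional vector spaces induces a rational map of Grassmannians regular precisely off the locus where the moving subspace meets the kernel, and it can be made precise either via the Pl\"ucker description above or by exhibiting $\rho(V)$ as the image of the tautological rank-$r$ subbundle restricted to $\Omega$. The hypothesis $r<P(d)$ plays no role beyond ensuring $\Gr(r,R'_d)$ is a nonempty Grassmannian.
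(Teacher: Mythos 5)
Your proof is correct and follows essentially the same route as the paper: the paper's Cauchy--Binet computation on the matrix of the Reynolds operator is exactly the coordinate form of your observation that $\alpha$ is induced by the linear projection $\bigwedge^r\rho$ on Pl\"ucker coordinates, regular off the locus where $V$ meets $\ker\rho$, and the nonemptiness argument via $\rho|_{R'_d}=\mathrm{id}$ (so that $G$ itself, viewed inside $\Gr(r,R_d)$, lies in $\alpha^{-1}(G)$) is the one the paper uses verbatim.
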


\begin{proof}
Let $\lambda_1, \dots \lambda_{P(d)}$ be an enumeration of the partitions of $d$,  $N=\dim_\kk R_d$, and order the monomials of $R_d$ as $\mu_1, \dots, \mu_N$, where $\mu_1=x^{\lambda_1}, \dots, \mu_{P(d)}=x^{\lambda_{P(d)}}$. 
As in the proof of \Cref{lem: previous}, let  $A$ be an  $N\times r$  matrix associated to $V\in \Gr(r, R_d)$, where $N=\dim_\kk R_d$. Namely, if $A=(a_{i,j})_{1\le j\le N,1\le i\le r}$, then $f_1, \dots, f_r$ is a basis of $V$, where $f_j=\sum_i a_{i,j}\mu_i$.

The action of an element $\sigma\in \sym_n$ on $V$ can be described as  multiplication of $A$ by an $N\times N$ invertible matrix $C_\sigma$. Set $C=\frac{1}{n!} \left(\sum_{\sigma\in \sym_n} C_\sigma \right)$. Let $B$ be the matrix consisting of the first $P_n(d)$ rows of the product $CA$. If $B=(b_{i,j})_{1\le j\le P(d),1\le i\le r}$, then the forms $F_1, \dots, F_r$ span $\alpha(V)$, where $F_j=\sum_i b_{i,j}M_{\lambda_i}\in R'_d$ and .

Applying the Cauchy-Binet formula it follows that for any subset $I$ of rows of $B$ of size $r$, and with $J$ equal to the set of all columns of $B$, the corresponding $r\times r$ minor  can be expressed as
\begin{equation}\label{eq: Cauchy-Binet}
\det(B)_{IJ}=\sum_{L}\det C_{IL}\det A_{LJ},
\end{equation}
where the sum ranges over the $\binom{N}{r}$ subsets $L$ of size $r$ of the columns of $C$ or of the rows of $A$. 

The set $\{V \mid \dim_\kk\alpha(V)=r\}$ is the complement of the vanishing locus of the minors $\det(B)_{IJ}$. The latter locus is cut out by linear equations in the Pl\"ucker coordinates of $V$ by \eqref{eq: Cauchy-Binet}.  Thus, the set of all $V\in \Gr(r,R_d)$ for which $\alpha(V)\in \Gr(r,R'_d)$ is open, and the restriction of $\alpha$ to this open set is a continuous map which is linear in each component cf.~\eqref{eq: Cauchy-Binet}. These two statements give the conclusion that $\alpha^{-1}(G)$ is open. To see that $\alpha^{-1}(G)$ is nonempty, recall that $G$ is nonempty and observe that if $V\in G$ is viewed as a subspace of $R_d$ then $\alpha(V)=V$. 
\end{proof}

\begin{notation}
\label{not:O}
When $n>d$  and $r<P(d)$ we let $G$ be the nonempty open subset of $\Gr(r,R'_d)$ from \cref{thm: narrow}. Then $\alpha^{-1}(G)$ is a nonempty open set of $\Gr(r, R_d)$ by \cref{lem: alpha-continuous}.  

We set 
\[
O=\begin{cases}
    \widetilde O\cap \alpha^{-1}(G) &\text{if $r<P(d)$}\\
    \widetilde O &\text{if $r\ge P(d)$.}
    \end{cases}
\]
This is an open set in $\Gr(r,R'_d)$. When $n$ satisfies \eqref{e:n-Noah}, $O$ is nonempty according to \cref{rem:Noah}.
\end{notation}

\begin{defn} 
\label{d: def-gsi}
Assume $n>d$ and $n$ satisfies \eqref{e:n-Noah}. We say that an ideal $I$ of $R$ is a {\it general $(r,d)$-symmetric ideal} if $I=(V)_{\sym_n}$ for some $V\in O$. 
\end{defn}

\cref{d: def-gsi} provides a  parametrization of general $(r,d)$-symmetric ideals by a nonempty open  set of the Grassmannian $\Gr(r,R_d)$ which  justifies the use of the word ``general''. This parametrization was introduced in \eqref{Grassmannian intro} as the map $\Phi$ and is recalled below in \eqref{Grassmannian}. Our next goal is to establish the equivalence of the  parametrization of general $(r,d)$-symmetric ideals by $\Gr(r,R_d)$ via $\Phi$ to another one by  $\Gr(r,R'_d)$ via the map $\Psi$ in \eqref{first map psi} and recalled below in \eqref{psi map}. The one given by the map $\Psi$ is better, since, as proven in \cref{t: CD}, $\Psi|_G$ is bijective.

\begin{notation}
\label{n:maps}We let $\mathcal{SI}_{r,d}(R)$ denote the set of all $(r,d)$-symmetric ideals of $R$ and then define: 
\begin{align}
&\overline{\alpha}: \mathcal{SI}_{r,d}(R) \to  \bigcup_{0\leq i\leq r} \Gr(i,R'_d)
\qquad &&\overline{\alpha}(I):=\alpha(I_d) \label{alpha-bar map}\\
&\Phi: \Gr(r,R_d) \to \mathcal{SI}_{r,d}(R)\qquad &&\Phi(V)\coloneq (V)_{\sym_n} \label{Grassmannian}\\
&\Psi\colon \Gr(r,R'_d)\to \mathcal{SI}_{r,d}(R)\qquad &&\Psi(U)\coloneq \Ann_R(U^\perp+S_{\geqslant -d+1}). \label{psi map}
\end{align}
\end{notation}

\begin{lem}\label{lem: phi subset psi}
Assume $n>d$, $r<P(d)$ and $n$ satisfies \eqref{e:n-Noah}. Then for a general $(r,d)$-symmetric ideal $I$ there exists $U\in G$ such that $I=\Psi(U)$. 
\end{lem}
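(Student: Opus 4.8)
The plan is to start from a general $(r,d)$-symmetric ideal, which by \cref{d: def-gsi} and \cref{not:O} (using $r<P(d)$) has the form $I=(V)_{\sym_n}$ for some $V\in O=\widetilde O\cap\alpha^{-1}(G)$, and to show that $U:=\alpha(V)$ is the desired element of $G$. First I would record that indeed $U\in G$: by the construction of $\alpha^{-1}(G)$ in the proof of \cref{lem: alpha-continuous}, every $V\in\alpha^{-1}(G)$ satisfies $\dim_\kk\alpha(V)=r$ and $\alpha(V)\in G$, so $U=\Span_\kk(\rho(f)\colon f\in V)\in G\subseteq\Gr(r,R'_d)$.

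Set $J:=\Psi(U)=\Ann_R\bigl(U^\perp+S_{\geqslant-d+1}\bigr)$. Since $V\subseteq R_d$, the ideal $I$ is generated in degree $d$; by \cref{thm: narrow}(3) the ideal $J$ is also generated in degree $d$, this being the place where $U\in G$ enters. Hence it suffices to prove $I_d=J_d$. To compute $J_d$, observe that for $f\in R_d$ the contraction $f\circ g$ vanishes for every $g\in S_{\geqslant-d+1}$ on degree grounds, so $\bigl[\Ann_R(U^\perp+S_{\geqslant-d+1})\bigr]_d=\bigl[\Ann_R(U^\perp)\bigr]_d$, and then \cref{lem: perp map} identifies this space with $\{f\in R_d\colon\rho(f)\in U\}$.

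I would then establish the inclusion $I_d\subseteq J_d$: the space $I_d$ is spanned by the elements $\sigma\cdot f$ with $\sigma\in\sym_n$ and $f\in V$, and since the Reynolds operator is $\sym_n$-invariant, $\rho(\sigma\cdot f)=\rho(f)\in U$, so each such element lies in $J_d$ by the description above. For the reverse I would compare dimensions: because $V\in\widetilde O$ and $r<P(d)$, we have $\dim_\kk I_d\geq\dim_\kk R_d-(P(d)-r)$, while \cref{prop: narrow}(2), applied with $a=\dim_\kk U^\perp=P(d)-r$, gives $\dim_\kk J_d=\dim_\kk R_d-(P(d)-r)$. Combining $I_d\subseteq J_d$ with $\dim_\kk I_d\geq\dim_\kk J_d$ forces $I_d=J_d$, and therefore $I=J=\Psi(U)$.

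Essentially all of the substantive content is already contained in \cref{thm: narrow}, \cref{lem: perp map} and the construction of the open set $O$, so the argument is short. The two points that require a little care are the passage from $J=\Ann_R(U^\perp+S_{\geqslant-d+1})$ to $J_d=[\Ann_R(U^\perp)]_d$ (needed so that \cref{lem: perp map} applies verbatim) and the concluding dimension count — it is there that the hypothesis $V\in\widetilde O$, equivalently the bound \eqref{e:n-Noah} that makes $\widetilde O$ nonempty, is genuinely used to upgrade the inclusion $I_d\subseteq J_d$ to an equality.
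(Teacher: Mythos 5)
Your proposal is correct and follows essentially the same route as the paper: set $U=\alpha(V)\in G$, use \cref{lem: perp map} to get $I_d\subseteq\Psi(U)_d$ (the paper phrases this as $V\subseteq\Psi(U)$ plus symmetry of $\Psi(U)$, which is equivalent to your Reynolds-operator check), and then conclude by the dimension count coming from $V\in\widetilde O$ together with $\dim_\kk\Psi(U)_d=\dim_\kk R_d-(P(d)-r)$ and generation in degree $d$ from \cref{thm: narrow}. No gaps.
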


\begin{proof}
 By the definition of an general $(r,d)$-symmetric ideal, $I=(V)_{\sym_n}$ with $V\in O$. Set $U=\alpha(V)$. Then $U\in G$.

Set $J=\Psi(U)$. 
\Cref{lem: perp map} shows that $V\subseteq J$, and since  $J$ is symmetric, we have
\begin{equation}
\label{e: Phi-sub-Psi}
\Phi(V)=(V)_{\sym_n}\subseteq J.
\end{equation}
Since $V\in\widetilde O$, we have the inequality below
\[
\dim_\kk I_d\ge \dim_\kk R_d-(P(d)-r)=\dim_\kk J_d
\]
where the equality comes from \cref{thm: narrow}. Thus the inclusion in \eqref{e: Phi-sub-Psi} implies $I_d=J_d$. Further, \cref{thm: narrow} gives that $J$ is generated in degree $d$ as is $I$ by definition. Thus we conclude that $I=J$. 
\end{proof}

\cref{lem: phi subset psi} shows that general symmetric ideals are in $\Psi(G)$, that is, $\Phi(O)\subseteq \Psi(G)$. Using \cref{thm: gens} to lift points in $G\subset\Gr(r, R'_d)$ to points in $O\subset \Gr(r, R_d)$ explicitly we establish the equality of these sets under appropriate conditions in \cref{t: CD}.

\begin{rem}
\label{rem: reformulate}
    In terms of \cref{n:maps} we can reformulate \cref{thm: gens} in a manner which reveals that $\overline{\alpha}$ and $\Psi$ are mutually inverse when restricted to appropriate subsets of their domains. With the assumptions of \cref{thm: gens}, we have
    \begin{enumerate}
    \item $\alpha(V)=\overline{\alpha}(\Phi(V))=U$.
    \item If $U\in G$, then $\Psi(U)=\Phi(V)$. In particular, $\overline{\alpha}(\Psi(U))=U$.
    \end{enumerate}
\end{rem}

In what follows, we denote by  $\Phi|_O$, $\Psi|_G$, $\alpha|_O$, $\overline{\alpha}|_{\Psi(G)}$ the restrictions of the maps $\Phi$, $\Psi$, $\alpha$ respectively $\overline{\alpha}$ to the indicated sets. We also make the convention to restrict the codomains of these maps as indicated in the diagram.

Recall that diagram \eqref{eq: CD} from the introduction aims to reconcile the two previously mentioned parametrizations $\Phi$ and $\Psi$ of $(r,d)$-symmetric ideals. Below we show that this diagram commutes. In view of \eqref{psi map} it suffices to show that the upper left triangle of \eqref{eq: CD} commutes. This appears as \eqref{eq: CDnew} below.

\begin{thm}
\label{t: CD}  
Suppose $\kk$ is  infinite  with ${\rm char}(\kk)=0$ or $\ch(\kk)>n$ and let $d,r$ be positive integers with $r\le P(d)$. 
Assume $n$ is sufficiently large so that \cref{c:f} can be done. Then $\Psi(G)=\Phi(O)$ and the map $\Phi$ restricted to $O$ admits the factorization  presented in the commutative diagram below: 
\begin{equation}\label{eq: CDnew}
\begin{tikzcd} [row sep =8mm, column sep = 3.2mm]
    O \arrow[rightarrow]{rrrrrr}{\Phi|_O} \arrow[rightarrow, drrr, "\alpha|_O"'] & & & &&& \Phi(O)\!\arrow[r,symbol={=}]&\!\Psi(G)  \\
   &   & & G \arrow[urrr,  "\Psi|_G"'] & & & &
\end{tikzcd}
\end{equation}
where   $\alpha|_O$ is surjective and $\Psi|_G$ is bijective, with $(\Psi|_G)^{-1}=\overline{\alpha}|_{\Psi(G)}$. Furthermore, $O=\Phi^{-1}(\Psi(G))$. 
\end{thm}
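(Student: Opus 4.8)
The statement to prove is Theorem~\ref{t: CD}: under the running hypotheses, $\Psi(G)=\Phi(O)$, the triangle \eqref{eq: CDnew} commutes with $\alpha|_O$ surjective and $\Psi|_G$ bijective with inverse $\overline{\alpha}|_{\Psi(G)}$, and finally $O=\Phi^{-1}(\Psi(G))$. My approach is to assemble these from the lemmas that precede it, namely \cref{thm: narrow}, \cref{thm: gens}, \cref{lem: alpha-continuous}, \cref{lem: phi subset psi}, and the reformulation in \cref{rem: reformulate}; the only genuinely new work is verifying the set-theoretic bookkeeping (the two inclusions giving $\Psi(G)=\Phi(O)$, and the characterization of $O$ as a preimage).

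First I would dispose of the degenerate case $r\geq P(d)$: then $G$ is empty or a point with $W=U^\perp=0$, and by \cref{thm: narrow}(1) both $\Psi(U)$ and $\Phi(V)$ for $V\in O=\widetilde O$ equal $\m^d$, so all claims are immediate. Assume henceforth $r<P(d)$. The commutativity of \eqref{eq: CDnew}, i.e. $\Phi|_O = \Psi|_G\circ \alpha|_O$, is exactly the content of \cref{rem: reformulate}(2): for $V\in O$, set $U=\alpha(V)\in G$ (this lands in $G$ by \cref{not:O}, since $O\subseteq\alpha^{-1}(G)$), lift $U$ via \cref{thm: gens} to some $V'=\Span_\kk(f_{\bsa^1},\dots,f_{\bsa^r})$, observe $\alpha(V')=U=\alpha(V)$, and conclude $\Phi(V)=\Phi(V')=\Psi(U)$ — here I must be slightly careful that \cref{lem: phi subset psi} already gives $\Phi(V)=\Psi(\alpha(V))$ directly, so commutativity is really just \cref{lem: phi subset psi} restated. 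For surjectivity of $\alpha|_O$: given $U\in G$, take the lift $V'$ from \cref{thm: gens}; then $V'\in\widetilde O$ (its image $(V')_{\sym_n}$ agrees in degree $d$ with $\Psi(U)$ which has $\dim_\kk[\Psi(U)]_d=\dim_\kk R_d-(P(d)-r)$ by \cref{thm: narrow}) and $\alpha(V')=U\in G$, so $V'\in \widetilde O\cap\alpha^{-1}(G)=O$, whence $\alpha|_O$ hits $U$. Consequently $\Psi(G)=\Psi(\alpha(O))=\Phi(O)$ by commutativity, proving the equality in the diagram.

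Next, bijectivity of $\Psi|_G$. Surjectivity onto $\Psi(G)$ is tautological. For injectivity, suppose $\Psi(U_1)=\Psi(U_2)$ with $U_1,U_2\in G$; applying $\overline\alpha$ and using $\overline{\alpha}(\Psi(U))=U$ from \cref{rem: reformulate}(2) gives $U_1=U_2$. The same identity $\overline{\alpha}\circ\Psi=\mathrm{id}_G$ together with surjectivity of $\Psi|_G$ shows $(\Psi|_G)^{-1}=\overline\alpha|_{\Psi(G)}$. Finally, for $O=\Phi^{-1}(\Psi(G))$: the inclusion $O\subseteq\Phi^{-1}(\Psi(G))$ follows since $\Phi(O)=\Psi(G)$. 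Conversely, if $V\in\Phi^{-1}(\Psi(G))$, then $\Phi(V)=\Psi(U)$ for some $U\in G$; comparing degree-$d$ parts, $\dim_\kk[\Phi(V)]_d=\dim_\kk R_d-(P(d)-r)$, so $V\in\widetilde O$, and applying $\overline\alpha=\Psi^{-1}$ on $\Psi(G)$ gives $\alpha(V)=\overline\alpha(\Phi(V))=\overline\alpha(\Psi(U))=U\in G$ — where the first equality is \cref{rem: reformulate}(1), valid for any $V$ via $\overline\alpha(\Phi(V))=\alpha(V)$. Hence $V\in\widetilde O\cap\alpha^{-1}(G)=O$.

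**Main obstacle.** Nothing here requires a hard new argument — the heavy lifting (that $\Psi(G)$-ideals are $d$-extremely narrow and generated in degree $d$, and that $G$-points lift through $\alpha$ to explicit $f_{\bsa}$'s) is already done in \cref{thm: narrow} and \cref{thm: gens}. The one point demanding care is making sure the identity $\overline\alpha\circ\Phi=\alpha$ on the relevant open set is used only where $\alpha(V)$ genuinely lands in $\Gr(r,R'_d)$ (not a lower-dimensional Grassmannian): this is guaranteed on $O\subseteq\alpha^{-1}(G)$ and, in the preimage argument, follows a posteriori once we know $\Phi(V)=\Psi(U)$ forces $V\in\widetilde O$ and then rerun the dimension count. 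I would state this compatibility cleanly once at the start of the proof and invoke it uniformly, to avoid circularity between "$\alpha(V)$ has full rank" and "$V\in O$."
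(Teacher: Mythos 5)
Your proposal is correct and follows essentially the same route as the paper's proof: both deduce commutativity from \cref{lem: phi subset psi}, use the explicit lifts of \cref{thm: gens} together with the dimension count of \cref{thm: narrow} to get $\Psi(G)=\Phi(O)$ and surjectivity of $\alpha|_O$, obtain bijectivity of $\Psi|_G$ from $\overline{\alpha}\circ\Psi=\mathrm{id}_G$ (\cref{rem: reformulate}), and prove $O=\Phi^{-1}(\Psi(G))$ by the same two-inclusion argument using $\alpha(V)=\overline{\alpha}(\Phi(V))$. The only differences are cosmetic (explicitly separating the case $r=P(d)$, and proving surjectivity of $\alpha|_O$ directly rather than via surjectivity of $\Phi|_O$).
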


\begin{proof} First we show that if $U\in G$, then $\Psi(U)\in \Phi(O)$. Indeed, if $U\in G$, then \cref{rem: reformulate} gives that $\Psi(U)=\Phi(V)$ and $\alpha(V)=U$ for $V$ as in \cref{thm: gens}.  Furthermore, if $J=\Psi(U)$, then $\dim_\kk J_d=\dim_\kk R_d-(P(d)-r)$ by \cref{thm: narrow}. Consequently, $V\in O$ and $\Psi(U)\in \Phi(O)$. This shows $\Psi(G)\subseteq \Phi(O)$. 

Now, starting with $V\in O$, set $U=\alpha(V)\in G$. By \cref{lem: phi subset psi} we conclude that $\Phi(V)=\Psi(U)$. This shows that \eqref{eq: CDnew} is commutative and also that $\Phi(O)\subseteq \Psi(G)$. Since we also proved above the reverse inclusion, we conclude $\Phi(O)=\Psi(G)$. 

The map $\Psi|_G$ is surjective, given the restriction of the codomain to $\Psi(G)$. Since $\Psi$ is also injective, it must be bijective. Since $\Phi|_O$ is surjective, then the commutativity of the diagram implies $\alpha|_O$ is surjective.

Since $\Phi(O)=\Psi(G)$, we have $O\subseteq \Phi^{-1}(\Psi(G))$. To prove the reverse inclusion, let $V\in \Phi^{-1}(\Psi(G))$. Then $\Phi(V)\in \Psi(G)$, and hence \cref{thm: narrow} shows that $\dim_\kk \Phi(V)=\dim_\kk R_d-(P(d)-r)$ and thus $V\in \widetilde O$. Write $\Phi(V)=\Psi(U)$ with $U\in G$. By \cref{rem: reformulate} we have that $\overline{\alpha}(\Psi(U))=U$, and hence $\overline{\alpha}(\Phi(V))=U\in G$. Finally, observe that $\alpha(V)=\overline{\alpha}(\Phi(V))$, and we conclude that $\alpha(V)\in G$, hence $V\in \alpha^{-1}(G)\cap \widetilde O=O$. We have showed thus the inclusion $\Phi^{-1}(\Psi(G))\subseteq O$, and hence $\Phi^{-1}(\Psi(G))= O$. 

The equality $\overline{\alpha}(\Psi(U))=U$ for all $U\in G
$ and the fact that $\Psi|_G$ is bijective also shows that $\Psi|_G$ and $\overline{\alpha}|_{\Psi(G)}$ are inverses. 
\end{proof}

We are  ready to summarize the properties of general $(r,d)$-symmetric ideals by putting together \cref{thm: narrow} and results from \cite{HSS24}. 

\begin{thm}\label{thm: main}
Suppose $\kk$ is  infinite  with ${\rm char}(\kk)=0$ or $\ch(\kk)>n$ and fix  positive integers $r$, $d$. Assume  
\[
n\ge  \max\left\{d+1, 1+\frac{1}{r} \sum_{i=0}^{d-1} P(i)\right\}
\]
Then a general $(r,d)$-symmetric ideal $I$ of ${\kk}[x_1,\ldots, x_n]$ satisfies the following: 

\begin{enumerate}
\item The Hilbert function of $A=R/I$ is given by
\[
 \HF_A(i):=\dim_\kk A_i = \begin{cases}\dim_\kk R_i &\text{if $i\le d-1$}\\
\max\{P(d)-r,0\}&\text{if $i=d$}\\
 0 &\text{if $i>d$.}\
 \end{cases}
\]
\item The betti table of $A$ has the form
\[
\begin{matrix}
     &0&1&2&\cdots&i &\cdots & n-2 &n-1&n\\
     \hline
     \text{total:}&1&u_1&u_2&\cdots&u_i&\cdots&u_{n-2} &u_{n-1}+\ell&a+b\\
     \hline
     \text{0:}&1&\text{.}&\text{.}&\text{.}&\text{.}&\text{.}&\text{.}&\text{.}&\text{.}\\
     \text{\vdots}&\text{.}&\text{.}&\text{.}&\text{.}&\text{.}&\text{.}&\text{.}&\text{.} &\text{.}\\
     \text{d-1:}&\text{.}&u_1&u_2&\cdots&u_i&\cdots&u_{n-2} &u_{n-1}&b\\
     \text{d:}&\text{.}&\text{.}&\text{.}&\text{.}&\text{.}&\text{.}&\text{.}&\ell & a \\
     \end{matrix}
\]
with 
\begin{eqnarray*}
\ell &=& \max\{P(d)-P(d-1)-r,0\} \\
a &=&\max\{P(d)-r,0\},\\
 b&=&
 \binom{n+d-2}{d-1}-an+ \ell  \\
  u_{i+1} &=& \binom{n+d-1}{d+i}\binom{d+i-1}{i}-a\binom{n}{i}  
  \end{eqnarray*}
\item $A$ has the Weak Lefschetz Property. 
\end{enumerate}

Moreover,  the Poincar\'e series of all finitely generated graded $A$-modules are rational, sharing a common denominator. When $d>2$ or $d=1$ the ring $A$ is Golod. When $d=2$, then $A$ is Koszul. The ring $A$ is Gorenstein if and only if $d=2$ and $r=1$, or $d=1$. 
\end{thm}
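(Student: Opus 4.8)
The plan is to use \Cref{t: CD} to replace a general $(r,d)$-symmetric ideal by one of the ideals $\Psi(U)=\Ann_R\bigl(U^\perp+S_{\geqslant -d+1}\bigr)$ with $U\in G$, and then to deduce every assertion from the structure of a $d$-extremely narrow algebra, combining \Cref{thm: narrow} with the resolution computations of \cite{HSS24}. First I would observe that, since the displayed bound on $n$ holds, \Cref{rem:Noah} (via \cite{Walker}) gives $O\ne\emptyset$, so general $(r,d)$-symmetric ideals exist; by \Cref{lem: phi subset psi} and \Cref{t: CD} any such ideal $I=(V)_{\sym_n}$, $V\in O$, equals $\Psi(U)$ for some $U\in G$. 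Then \Cref{thm: narrow} applies: $A=R/I$ is $d$-extremely narrow, $I$ is generated in degree $d$ with $\dim_\kk I_d=\dim_\kk R_d-a$, the socle polynomial of $A$ is $(\dim_\kk R_{d-1}-an+\ell)z^{d-1}+az^d$, and $A$ has the Weak Lefschetz Property, which is item~(3). If $r\ge P(d)$ then $V\in O=\widetilde O$ forces $I_d=R_d$, so $I=\m^d$ (it is generated in degree $d$), $a=\ell=0$, and every claim reduces to a classical fact about $R/\m^d$; so I may assume $r<P(d)$. Item~(1) is then immediate: $A_i=R_i$ for $i\le d-1$ since $I$ is generated in degree $d$, $\dim_\kk A_d=a$ by the count above, and $A_i=0$ for $i>d$ since $s(A)=d$.

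For item~(2) I would resolve $A$ through its inverse system. Matlis duality gives $\beta^R_{i,j}(A)=\beta^R_{n-i,\,n-j}(I^{-1})$, and, writing $W=U^\perp$, the module $I^{-1}=W+S_{\geqslant -d+1}$ fits into the short exact sequence
\[
0\longrightarrow (\m^{d})^{-1}\longrightarrow I^{-1}\longrightarrow \kk(d)^{a}\longrightarrow 0 ,
\]
in which $(\m^{d})^{-1}=S_{\geqslant -d+1}$ is the Matlis dual of $R/\m^d$ (whose $(d-1)$-linear resolution is classical) and the cokernel $W=(I^{-1})_{-d}$ is killed by $\m$. Running the long exact $\Tor$-sequence, the connecting maps $\Tor^R_{i+1}(\kk(d)^{a},\kk)\to\Tor^R_{i}\bigl((\m^{d})^{-1},\kk\bigr)$ are governed by the linear syzygies of $W$, and \Cref{prop: dimension linear relations} — which defines $G$ — records that $\dim_\kk L_{W}=\ell$. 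This produces precisely the single extra Betti number $\beta_{n-1,\,d+n-1}(A)=\ell$ and precisely $b=\dim_\kk R_{d-1}-an+\ell$ minimal generators of $I^{-1}$ in degree $-(d-1)$ (in agreement with the socle polynomial), while the remaining Betti numbers come out as $u_{i+1}=\beta_{i+1}(R/\m^d)-a\binom ni$, the term $a\binom ni$ being exactly the effect of deleting an $a$-dimensional slice of socle in degree $d$. This is the computation carried out in \cite{HSS24} for $d$-extremely narrow algebras; $a$ enters only through that correction term, so the argument goes through unchanged.

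The Gorenstein dichotomy in the last sentence I would read off the socle polynomial, since $A$ is Gorenstein exactly when $\dim_\kk\Soc(A)=1$ with the socle in a single degree. If $a\ge 2$ the summand $az^d$ already has dimension $\ge 2$, so $a\le 1$. If $a=1$ then $r=P(d)-1$, so $\ell=\max\{1-P(d-1),0\}$, which vanishes unless $d\le 2$, and Gorensteinness forces $\dim_\kk R_{d-1}-n+\ell=0$: this holds for $d=2$ (leaving socle $z^2$) and fails for $d\ge 3$ when $n\ge d+1$. If $a=0$ then $\ell=0$ and Gorensteinness forces $\dim_\kk R_{d-1}=1$, i.e.\ $d=1$ (where $A=\kk$); conversely $d=1$ gives $A=\kk$. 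Hence $A$ is Gorenstein iff $(d,r)=(2,1)$ or $d=1$. For Golodness when $d\ge 3$: by item~(2) the nonzero Betti numbers of $A$ in homological degree $k\ge1$ lie in internal degrees in $[\,d-1+k,\;d+k\,]$, so a $p$-fold Massey product with $p\ge2$ of homogeneous classes in $\Tor^R_{\ge1}(A,\kk)$ of homological degrees $i_1,\dots,i_p$ lies in homological degree $\sum_li_l-(p-2)$ and internal degree at least $p(d-1)+\sum_li_l$, which strictly exceeds the top available internal degree $d+\bigl(\sum_li_l-(p-2)\bigr)$ because $d(p-1)>2$; thus all Massey operations vanish and $A$ is Golod, and for $d=1$ the ring $A=\kk$ is trivially Golod.

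For Koszulness when $d=2$: if $r\ge P(2)=2$ then $A=R/\m^2$ has $\m_A^2=0$, hence is Koszul; if $r=1$ then $A$ is the Artinian Gorenstein algebra apolar to the quadratic form $q$ spanning $(I^{-1})_{-2}$ as in \Cref{p:new-example}(3), and the property $\mathcal Q(U^\perp)$ built into $G$ forces $(x_1-x_2)\circ q\ne0$, hence $\rank q\ge n-1\ge2$, so $A$ is the apolar algebra of a quadratic form of corank at most one, which is Koszul (see \cite{HSS24}; in the corank-one case $A$ is isomorphic to the apolar algebra of a nondegenerate form in $n-1$ variables). The rationality of the Poincar\'e series of all finitely generated graded $A$-modules with a common denominator then follows from the Golod property when $d>2$, and when $d\le2$ from the fact that $\m_A^3=0$ (which places $A$ among the short local rings whose module theory is understood); both cases are treated in \cite{HSS24}. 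The hard part throughout is the Betti-table identification in item~(2): one must make the $\Tor$-sequence bookkeeping precise, in particular verify that the connecting maps are injective in the claimed ranges, so that the only surplus homology is the $\ell$-dimensional $L_W$ and nothing appears outside the two displayed rows — this is where the $d$-extremely narrow hypothesis is used, and the work here amounts to checking that the \cite{HSS24} argument uses only that hypothesis, hence passes from $a=P(d)-1$ to arbitrary $a$.
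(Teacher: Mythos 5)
Your proposal is correct and follows essentially the same route as the paper: reduce to $I=\Psi(U)$ with $U\in G$ via \cref{lem: phi subset psi} and \cref{t: CD}, treat $r\ge P(d)$ separately as $I=\m^d$, apply \cref{thm: narrow} to get the $d$-extremely narrow structure, socle polynomial and WLP, and then obtain the Hilbert function, Betti table, Poincar\'e series, Golod/Koszul and Gorenstein statements from the structure theory of $d$-extremely narrow algebras in \cite{HSS24}, which the paper simply cites where you re-sketch the arguments. The only slip worth noting is in your Golod sketch: the $p$-fold Massey product lives in homological degree $\sum_l i_l+(p-2)$ rather than $\sum_l i_l-(p-2)$, but the degree comparison still forces vanishing for all $p\ge 2$ when $d\ge 3$.
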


\begin{proof}
Assume first $r\ge P(d)$. Then $I=(V)_{\sym_n}$ with $V\in \widetilde O$, and hence $I=\m^d$ follows from the definition of $\widetilde O$. In this case, the betti numbers of $I$ are well-known and agree with the ones in (2), and the Weak Lefschetz Property holds trivially. In this case, $A$ is known to be Golod, it is Koszul when $d=2$, and it is trivially Gorenstein when $d=1$.

Assume $r<P(d)$, which implies $d\geq 2$.  By \cref{lem: phi subset psi} we have  $I=\Ann_R(U^\perp+S_{\geqslant -d+1})$ for some $U\in G$. 
Then \cref{thm: narrow} implies that $A$ is $d$-extremely narrow, with socle polynomial equal to 
 \[
 \big(\dim_\kk R_{d-1}-(P(d)-r)n+ \max\{P(d)-P(d-1)-r,0\}\big)z^{d-1}+(P(d)-r)z^d\,.
 \]
 and $A$ has the Weak Lefschetz Property.
 The Hilbert series of $A$ follows then from \cite[Lemma 4.5]{HSS24} and the betti numbers of $A$ follow from \cite[Corollary 4.10]{HSS24}. The considerations about Poincar\'e series follow from \cite[Proposition 4.13]{HSS24}. To assess the Gorenstein property we set $a=1$ and $b=0$ and observe that the only possible values for $r,d$ are as claimed.
 \end{proof}

A consequence of \cref{thm: main} is that general $(r,d)$-symmetric ideals have minimal Hilbert function among all $(r,d)$-symmetric ideals. 
 \begin{cor}\label{cor: extremal HF}
 Assume the setting of \cref{thm: main}. If $I$ is a general $(r,d)$-symmetric ideal and $J$ is any $(r,d)$-symmetric ideal, then 
\[
\HF_{R/I}(i)\le \HF_{R/J}(i) \qquad\text{for all $i$}
\]
 \end{cor}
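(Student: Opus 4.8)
The plan is to reduce the comparison to the single degree $i=d$, where the two Hilbert functions can actually differ, and to observe that in all other degrees the inequality is immediate. Any $(r,d)$-symmetric ideal $J$ is, by the paper's terminology, of the form $J=(W)_{\sym_n}$ for some $r$-dimensional subspace $W\subseteq R_d$; hence $J$ is generated in degree $d$ and $J_i=0$ for $i<d$. Combined with \cref{thm: main}(1), this gives $\HF_{R/J}(i)=\dim_\kk R_i=\HF_{R/I}(i)$ for all $i\le d-1$, and for $i\ge d+1$ we get $\HF_{R/I}(i)=0\le \HF_{R/J}(i)$. So it remains to prove $\HF_{R/J}(d)\ge \HF_{R/I}(d)=\max\{P(d)-r,0\}$, the last equality again being \cref{thm: main}(1).

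For the degree-$d$ bound I would use the Reynolds operator $\rho\colon R\to R'$, available because $\ch\kk=0$ or $\ch\kk>n$, together with $\dim_\kk R'_d=P(d)$, which holds since $n\ge d$ (the hypothesis of \cref{thm: main} forces $n\ge d+1$). Writing $J=(W)_{\sym_n}$ with $\dim_\kk W=r$, the key observation is that $J_d\cap R'_d=\rho(J_d)$: indeed $\rho$ fixes invariants, so $J_d\cap R'_d\subseteq \rho(J_d)$, while $\rho(J_d)\subseteq J_d$ because $J_d$ is a $\sym_n$-stable subspace and $\rho(j)$ is an average of the elements $\sigma\cdot j\in J_d$. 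Since $J_d=\Span_\kk(\sigma\cdot f : \sigma\in\sym_n,\ f\in W)$ and $\rho(\sigma\cdot f)=\rho(f)$, we obtain $\rho(J_d)=\Span_\kk(\rho(f):f\in W)=\alpha(W)$, whose dimension is at most $\dim_\kk W=r$. Therefore the composite $R'_d\hookrightarrow R_d\twoheadrightarrow (R/J)_d$ has kernel $J_d\cap R'_d=\alpha(W)$ of dimension $\le r$, so its image has dimension $\ge P(d)-r$; hence $\HF_{R/J}(d)=\dim_\kk (R/J)_d\ge P(d)-r$, and trivially $\ge 0$, which completes the proof.

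I expect no serious obstacle here: the entire content is the degree-$d$ estimate, and the only point requiring care is correctly identifying $J_d\cap R'_d$ with the Reynolds image $\alpha(W)$ of the generators. Two alternative routes are worth recording as backups. One is via Macaulay--Matlis duality: $(J^{-1})_{-d}$ is the orthogonal complement of $J_d$ in $S_{-d}$ and contains $W^\perp\cap S'_{-d}$ (using $\sigma\cdot(f\circ g)=(\sigma f)\circ(\sigma g)$ and $\sigma\cdot g=g$ for $g\in S'$), and $\dim_\kk(W^\perp\cap S'_{-d})\ge \dim_\kk W^\perp+\dim_\kk S'_{-d}-\dim_\kk S_{-d}=P(d)-r$, so $\HF_{R/J}(d)=\dim_\kk(J^{-1})_{-d}\ge P(d)-r$. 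The other is a semicontinuity argument: $V\mapsto \dim_\kk[(V)_{\sym_n}]_d$ is lower semicontinuous on the irreducible variety $\Gr(r,R_d)$, so its maximum is attained on a dense open subset, which must meet the nonempty open set $O$; evaluating at a point of the intersection and invoking \cref{thm: main}(1) identifies that maximum with $\dim_\kk I_d$, whence $\dim_\kk J_d\le \dim_\kk I_d$ for every $(r,d)$-symmetric $J$.
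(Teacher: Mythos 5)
Your proposal is correct and takes essentially the same route as the paper: after reducing to degree $d$, the paper likewise applies the Reynolds operator, using the induced surjection $R/J\twoheadrightarrow R'/\rho(J)$ and the fact that $\rho(J)$ is generated by $r$ elements in degree $d$ to get $\HF_{R/J}(d)\ge P(d)-\dim_\kk[\rho(J)]_d\ge P(d)-r$. Your identification of $J_d\cap R'_d$ with $\alpha(W)$ is just the dual formulation of that same estimate.
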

 \begin{proof}
 Since both $I$ and $J$ are generated in degree $d$ and $[R/I]_{d+1}=0$, we only need to prove the inequality for $i=d$.  The inequality is clear when $I=\m^d$, so we assume $r<P(d)$. 

Consider the map $\rho\colon R\to R'$, and let $J'=\rho(J)$. Then we have an induced surjective map $R/J\to R'/J'$, giving
\[
\HF_{R/J}(d)\ge \HF_{R'/J'}(d)=P(d)-\dim_\kk([J']_d)\ge P(d)-r=\HF_{R/I}(d)
\]
where the second inequality uses the inequality $\dim_\kk([J']_d)\le r$, which is justified by the fact that $J'$ is generated by $r$ elements in degree $d$. 
 \end{proof}

 We end this section with a discussion of a possible generalization of our results. 

 \begin{defn}
 \label{d: multiple}
Let $s\ge 1$. For each $1\le i\le s$ consider integers  $r_i\ge 1$ and $d_i$ such that $2\le d_1< d_2< \dots<d_s$.  We say that an ideal $I$ is an $(r_1, \dots, r_s; d_1, \dots, d_s)$--{\it symmetric ideal} if  $I=(V)_{\sym_n}$, where 
\[
V=V_{d_1}\oplus+\dots \oplus V_{d_s}
\]
with $V_{d_i}\in \Gr(r_i, R_{d_i})$. 
 \end{defn}

With the notation in \cref{d: multiple}, one may wonder if we can extend our results using a similar notion of general $(r_1, \dots, r_s;d_1, \dots, d_s)$--symmetric ideal, that would involve a nonempty open set in $\Gr(r_1, R_{d_1})\times \dots \times \Gr(r_s,R_{d_s})$. However, it turns out that such an extension does not produce any additional information, as pointed out below, since a general  $(r_1, \dots, r_s; d_1, \dots, d_s)$-symmetric ideal would be equal to a general $(r_1,d_1)$--symmetric ideal. 

 \begin{rem}\label{rem: multiple}
With the notation in \cref{d: multiple}, if $V_{d_1}\in O$, where $O$ is the open set of \cref{not:O} with $r=r_1$ and $d=d_1$, then
 \[
 I=(V_{d_1})_{\sym_n}\,.
 \]
 Indeed, the assumption that $V_{d_1}\in O$ gives that $(V_{d_1})_{\sym_n}$ is a general $(r_1,d_1)$--symmetric ideal, and thus $\HF_{R/(V_{d_1})_{\sym_n}}(i)=0$ for $i>d_1$. This implies that 
 \[
 I_i\supseteq [(V_{d_1})_{\sym_n}]_i=\m^i \quad\text{for all $i>d_1$}
 \]
 and hence the inclusion must be an inequality. Since we also have $I_i=[(V_{d_1})_{\sym_n}]_i$  when $i=d_1$ for degree reasons, the two ideals must be equal. 
\end{rem}

\section{Symmetric inverse systems}\label{s: inverse system}

In this section we further develop the theme that most $(r,d)$-symmetric ideals $I$ can be understood by examining their $\sym_n$-invariant elements, that is, by considering the ideal $I'=I\cap R'$, where $R'$ is the ring of invariants of $R$ under the action of $\sym_n$. The main result of this section, \cref{thm: I' inverse}, shows that the ideal $I'$, although merely a shadow of $I$, can be used to recover $I$ by means of  Macaulay-Matlis duality.

In what follows, if $W$ is a subset of $S$ then we denote by $R\circ W$ the $R$-submodule of $S$ generated by $W$. We observe that the dual pairing $\circ$ between $R$ and $S$ induces a dual pairing between $R'$ and $S'$, that we also denote $\circ$. If $I'$ is an ideal of $R'$, then we denote $(I')^{-1}$ the $R'$-submodule of $S'$ given by 
\[(I')^{-1}=\{g\in S'\colon f\circ g=0 \text{ for all } f\in I'\}\,.
\]

If $I$ is a general $(r,d)$-symmetric ideal we show that the following diagram commutes 
\begin{equation}\label{eq: CD algebra}
\begin{tikzcd}
    I \arrow[]{rr}{} \arrow[]{d}{\rho} & &I^{-1}=W+S_{\geq -d+1} \arrow[]{d}{\rho} \arrow {ll}\\[5pt]
     I'\arrow{rr} &&(I')^{-1}=W+S'_{\geq -d+1} \arrow {ll}.
\end{tikzcd}
\end{equation}
The vertical maps $\rho$ are given by the Reynolds operator $\rho(f)=\frac{1}{n!} \sum_{\sigma\in \sym_n} \sigma \cdot f$ while the horizontal maps arise from Macaulay-Matlis duality with respect to  the perfect pairings between $R$ and $S$ (top) and $R'$ and $S'$ (bottom) respectively. The surprising aspect of the diagram is that $I^{-1}$  is obtained from $(I')^{-1}$ by extension of scalars, that is, $I^{-1}=R\circ ({I'})^{-1}$, whereas it is not the case that $I$  is obtained from $I'$  by extension of scalars.

Next we  establish the circle of ideas pictured in diagram \eqref{eq: CD algebra}.

\begin{thm}\label{thm: I' inverse}
Suppose $\kk$ is  infinite  with ${\rm char}(\kk)=0$ or $\ch(\kk)>n$ and let $d$, $r$ be positive integers.
Assume $n$ is sufficiently large so that \cref{c:f} can be done and $r\le P(d)$.
If $I$ is a general $(r,d)$-symmetric ideal in $R$, $I'=I\cap R'$ and $W=(\overline{\alpha}(I))^\perp$, then the following identities hold 
 \begin{eqnarray*}
 (I')^{-1} &=& W+S'_{\geqslant -d+1}\\
 I^{-1} &=& W+S_{\geqslant -d+1}=R\circ (W+S'_{\geqslant -d+1})=R\circ (I')^{-1}.
 \end{eqnarray*}
 Thus $I'$ determines $I$ by means of the identity $I=\Ann_R\left(R\circ (I')^{-1}\right)$.
\end{thm}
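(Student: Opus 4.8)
The strategy is to reduce everything to the structure theorem for general symmetric ideals already established, namely that a general $(r,d)$-symmetric ideal $I$ equals $\Ann_R(U^\perp + S_{\geq -d+1})$ for $U = \overline{\alpha}(I) \in G$ (this is the content of \cref{lem: phi subset psi}, \cref{thm: narrow}, and \cref{thm: gens}), together with the identification $W = U^\perp \subseteq S'_{-d}$ and the double-annihilator / Matlis duality formalism from \cref{s: duality}. By Matlis duality, $I^{-1} = \Ann_S(I)$, and since $I = \Ann_R(W + S_{\geq -d+1})$ we immediately get $I^{-1} = (\Ann_R(W+S_{\geq-d+1}))^{-1} = W + S_{\geq -d+1}$, using $(\Ann_R(M))^{-1} = M$ for graded submodules $M$ of $S$. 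This disposes of the formula for $I^{-1}$ at once; the remaining content is the interplay with the invariant ring. Note $W \subseteq S'_{-d}$ consists of invariant elements, and $S'_{\geq -d+1} \subseteq S'$, so the module $W + S'_{\geq -d+1}$ lives in $S'$ while $W + S_{\geq -d+1}$ lives in $S$.

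First I would verify the second line, $I^{-1} = W + S_{\geq -d+1} = R\circ(W + S'_{\geq-d+1})$. The inclusion $R\circ(W + S'_{\geq-d+1}) \subseteq W + S_{\geq-d+1}$ holds because $W + S_{\geq -d+1}$ is an $R$-submodule of $S$ containing $W + S'_{\geq -d+1}$ (note $\m \circ W \subseteq S_{-d+1} \subseteq S_{\geq -d+1}$ since $W$ lives in degree $-d$). For the reverse inclusion: $S_{\geq -d+1}$ is generated as an $R$-module in its top degree $-d+1$ by all monomials of degree $d-1$; since $n \geq d$, every such monomial is $R$-generated (via contraction) by the invariant elements $m_\mu \in S'_{-d+1}$ — more precisely, $x_i \circ y^{(e)} $ realizes any monomial of degree $-d+1$ from a suitable monomial of degree $-d$, and by averaging one sees $R \circ S'_{-d+1} = S_{-d+1} = (S_{\geq -d+1})$, hence $R\circ S'_{\geq -d+1} = S_{\geq -d+1}$; adding $W$ gives the claim. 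The final equality $R\circ(W+S'_{\geq -d+1}) = R\circ(I')^{-1}$ then follows from the first line once we prove it.

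Next I would prove the first line, $(I')^{-1} = W + S'_{\geq -d+1}$, working with the perfect pairing between $R'$ and $S'$. Since $I' = I \cap R'$ and $I$ is generated in degree $d$ with $\HF_{R/I}$ as in \cref{thm: main}(1), one has $I'_j = R'_j$ for $j > d$ (as $I_j = R_j$ forces $I \cap R' = R'$ in those degrees), $I'_j = 0$ for $j < d$, and in degree $d$, $I'_d = \rho(I_d) = \overline\alpha(I) = U$ by \cref{rem: reformulate}(1). Hence $(I')^{-1}$ is everything in $S'$ in degrees $> -d$, namely $S'_{\geq -d+1}$, plus $U^\perp = W$ in degree $-d$ — this is precisely the computation of an inverse system from a Hilbert-function description, using that $(I')^{-1}_{-d} = \{g \in S'_{-d} : u \circ g = 0 \ \forall u \in U\} = U^\perp = W$. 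The only subtlety is that $I'$ need not be generated in degree $d$ over $R'$, but this does not matter: the inverse system only depends on $I'$ as a graded vector space degree-by-degree against the pairing, and in each degree the orthogonal complement is as stated. Finally, combining the two lines gives $I^{-1} = R \circ (I')^{-1}$, hence $I = \Ann_R(I^{-1}) = \Ann_R(R \circ (I')^{-1})$ by Matlis duality, which is the asserted identity.

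**Expected main obstacle.** The one place requiring genuine care is the claim $R \circ S'_{\geq -d+1} = S_{\geq -d+1}$, i.e. that the non-invariant part of the inverse system is recovered from invariant generators by extension of scalars — this is exactly the ``surprising aspect'' flagged in the text. It hinges on $n$ being large enough (here $n \geq d$ suffices) so that for every monomial $\nu$ of degree $d-1$ in $S$ there is a monomial $\mu$ of degree $d$ and a variable $x_i$ with $x_i \circ \mu = \nu$, and so that averaging $\mu$ into $m_{\type(\mu)} \in S'$ still lets $R$ hit $\nu$; concretely $\m \circ m_\lambda$ spans all of $S_{-d+1}$ as $\lambda$ ranges over partitions of $d$, which one checks by a short direct computation with the contraction action (or by a dimension count combined with the fact that no $S_{-d+1}$-monomial is orthogonal to all $\m \circ m_\lambda$). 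Everything else is a routine application of \cref{thm: narrow}, \cref{thm: gens}, \cref{rem: reformulate}, and the Matlis duality identities $(\Ann_R(M))^{-1} = M$, $\Ann_R(M^{-1}) = \Ann_R(M)$ recorded in \cref{s: duality}.
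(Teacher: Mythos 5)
Your handling of the two easy identities is fine and matches the paper: $I^{-1}=W+S_{\geqslant -d+1}$ follows from $I=\Ann_R(W+S_{\geqslant -d+1})$ and Matlis duality, and your degree-by-degree computation of $(I')^{-1}=W+S'_{\geqslant -d+1}$ against the perfect pairing $R'_j\times S'_{-j}\to\kk$ is correct (and in fact more direct than the paper's two-inclusion argument). The genuine gap is exactly at the step you yourself flagged as the crux: the claim $R\circ S'_{\geqslant -d+1}=S_{\geqslant -d+1}$ is false. Contraction by $R_j$ raises internal degree by $j$, so the degree $-(d-1)$ component of $R\circ S'_{\geqslant -d+1}$ receives contributions only from $R_0\circ S'_{-(d-1)}=S'_{-(d-1)}$, which has dimension $P(d-1)$, not $\binom{n+d-2}{d-1}$. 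Your justification produces, for a monomial $\nu\in S_{-(d-1)}$, a monomial $\mu\in S_{-d}$ with $x_i\circ \mu=\nu$; but $\mu$, and likewise its symmetrization $m_{\type(\mu)}\in S'_{-d}$, sits in degree $-d$ and hence outside $S'_{\geqslant -d+1}$, and $S'_{-d}$ is not contained in $(I')^{-1}$ --- only $W$ is.

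Nor can the gap be closed by ``adding $W$'': the degree $-(d-1)$ component of $R\circ(W+S'_{\geqslant -d+1})$ is exactly $S'_{-(d-1)}+R_1\circ W$, of dimension at most $P(d-1)+n(P(d)-r)$, which is linear in $n$, whereas $\dim_\kk S_{-(d-1)}=\binom{n+d-2}{d-1}$ grows like $n^{d-1}$. So for $d\ge 3$ and $n\gg 0$ the middle equality $W+S_{\geqslant -d+1}=R\circ(W+S'_{\geqslant -d+1})$ fails, and with it the final identity $I=\Ann_R(R\circ(I')^{-1})$: since $\Ann_R(R\circ M)=\Ann_R(M)$, a count of linear conditions shows $\Ann_R((I')^{-1})$ contains nonzero elements of degree $d-1$, where $I$ has none. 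You should know that the paper's own proof has the same defect --- it deduces $S_{\geqslant -d+1}\subseteq R\circ S'_{\geqslant -d+1}$ from the identity $R\circ S'=S$, but that identity is achieved only by contracting invariants such as $y_1^{(q)}\cdots y_n^{(q)}$ of degree $-nq$, which do not lie in $S'_{\geqslant -d+1}$. The computation does go through for $d\le 2$ and $r<P(d)$ (there $\dim_\kk R_1\circ W=n(P(2)-r)-\dim_\kk L_W$ does fill up $S_{-1}$), but for $d\ge 3$ neither your argument nor the paper's establishes the stated equality, and the dimension count above indicates the statement itself is the problem rather than the write-up.
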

\begin{proof}
Since $I$ is general, \cref{t: CD} gives $I=\Psi(U)$ with $U\in G$, where $G$ is the Zariski open set from \cref{thm: narrow} and $U=\overline{\alpha}(I)$. We have thus  $W=U^\perp$ and hence
\[
I=\Psi(U)=\Ann_R(W+S_{\geqslant -d+1})\,.
\]
By duality, it follows that
\begin{equation}\label{eq: I^-1}
I^{-1}=W+S_{\geqslant -d+1}.
\end{equation}

Next we prove that the vector space above is obtained from $W+S'_{\geq -d+1}$ by extension of scalars from $R'$ to $R$. First notice that $R\circ S'=S$. Indeed, it suffices to prove that any monomial in $S$ is obtained from a symmetric polynomial via contraction. Let $\mu=y_1^{b_1}\cdots y_n^{b_n}$ be such a monomial and let $q=\max\{b_i: 1\leq i\leq n\}$. Then $y_1^{q}\cdots y_n^{q}\in S'$ and $x_1^{q-b_1}\cdots x_n^{q-b_n}\circ y_1^{q}\cdots y_n^{q}=\mu$. Next, observe that $R\circ W\subseteq W+S_{\geqslant -d+1}$, since $W\subseteq S_{-d}$. Therefore we have established containments
\[
 W+S_{\geq -d+1} \subseteq R\circ (W+S'_{\geq -d+1})=R\circ W+R\circ S'_{\geqslant -d+1}\subseteq W+S_{\geqslant -d+1},
\]
which yield the desired equality
\begin{equation}\label{eq: extension}
W+S_{\geqslant -d+1}=R\circ (W+S'_{\geqslant -d+1}).
\end{equation}

Finally we show  $(I')^{-1}=W+S'_{\geqslant -d+1}$. By definition,  $I$ is generated in degree $d$, and, since $I'=\rho(I)$, the ideal $I'$ is also generated in degree $d$.
 Moreover, $U=[I']_d$ and $W=U^\perp= [(I')^{-1}]_{-d}$.  In view of the containment  $S'_{\geqslant -d+1}\subseteq (I')^{-1}$, which follows from $I'$ being generated in degree $d$, we have established $W+S'_{\geqslant -d+1}\subseteq (I')^{-1}$. 
 
 On the other hand, we have
\begin{equation}
\label{e:ann}
 \Ann_{R'}(W+S'_{\geqslant-d+1})=\Ann_{R'}(W)\cap \Ann_{R'}(S'_{\geqslant -d+1})=\Ann_{R'}(W)\cap R'_{\geqslant d}.
 \end{equation}
 Note that 
 \begin{align}
 \begin{split}
 \label{e:annincl}
 \Ann_{R'}(W)&=\{f\in R'\mid f\circ g=0 \text{ for all $g\in W$}\}\\
 &\subseteq \{f\in R\mid f\circ g=0 \text{ for all $g\in W$}\}\\
 &=\Ann_R(W).
 \end{split}
 \end{align}
Putting \eqref{e:ann} and \eqref{e:annincl} together, we conclude that
\[
\Ann_{R'}(W+S'_{\geqslant-d+1})\subseteq \Ann_R(W)\cap R_{\geqslant d}=\Ann_R(W+S_{\geqslant -d+1})=I.
\]
Upon intersecting with $R'$, we obtain
\[
\Ann_{R'}(W+S'_{\geqslant-d+1})\subseteq I'.
\]
Equivalently, $(I')^{-1}\subseteq W+S'_{\geqslant-d+1}$. We have proved thus  $(I')^{-1}=W+S'_{\geqslant-d+1}$. 
\end{proof}

\section{Asymptotic stability with respect to the number of variables}\label{s: asymptotic}

In this section, we are concerned with how the properties of interest of general symmetric ideals change with the number of variables. 

We denote  $R=\kk[x_1, \dots, x_n, \ldots]$, with $\kk$ a field, the polynomial ring in countably many variables and $R_n=\kk[x_1, \dots, x_n]$. Unlike in previous sections, the subscripts utilized so far record the Krull dimension of each ring, not internal degrees. The ring $R$ is the colimit of the directed system of rings $R_n$.

Let $\sym_\infty=\bigcup_{n\geq 1} \sym_n$ be the group of permutations of $\N$ that fix all but finitely many elements. It acts on $R$ by permuting variables in the natural way, that is,
\[
\sigma\cdot f = f(x_{\sigma(1)}, x_{\sigma(2)}, \ldots ) \text{ for any } \sigma \in \sym_\infty.
\]
This action induces an action of $\sym_n$ on $R_n$ for every $n \in \N$ that coincides with that considered in the previous sections.

An ideal $\I$ of $R$ is {\em $\sym_\infty$-invariant} if $\sigma\cdot f\in \I$ whenever $f\in \I$.  If $\I$ is a $\sym_\infty$-invariant ideal, then the sequence of ideals $I_n = \I \cap R_n$ are symmetric with respect to the action of $\sym_n$ on $R_n$. Moreover the chain satisfies the property that for $m\leq n$, $\sigma\in\sym_n$ and $f\in I_m$, it follows that $\sigma \cdot f\in I_n$. A sequence $\{I_n\}_{n\in\N}$ of ideals that has this property is called a {\em $\sym_\infty$-invariant chain}. Conversely, given a $\sym_\infty$-invariant chain $\{I_n\}_{n\in\N}$, the colimit  $\I=\varinjlim_{n\in\N} I_n$ is a $\sym_\infty$-invariant ideal. 

As $R$ is Noetherian up to symmetry by \cite{Cohen}, any $\sym_\infty$-invariant ideal can be generated by finitely many elements $f_1,\ldots, f_r$ of $R$, thus has the form
\[
\I=(f_1, \ldots, f_r)_{\sym_\infty}=\left (\sigma \cdot f_i \mid \sigma\in \sym_\infty, 1\leq i\leq r \right ).
\]
Setting $m$ to be the largest index of a variable in the union of the supports of $f_1, \ldots, f_r$ gives that $\I$ is the colimit of the chain $I_m\subseteq I_{m+1}\subseteq \cdots I_n \subseteq \cdots$, where $I_n=(f_1, \ldots, f_r)_{\sym_n}$ for each $n\geq m$.

We apply the results from our previous sections to study chains of  $\sym_\infty$-invariant ideals as above where $f_1, \ldots, f_r$ are general forms of degree $d$. Since $n$ is no longer fixed, an adjustment of our previously used notation is needed. 

\begin{notation} For each $n\ge d$, we identify the $\kk$-vector spaces $[R_n']_d$ with $ \kk^{P(d)}$ by identifying a basis  of $[R_n']_d$ with the standard basis of $\kk^{P(d)}$. This leads to an identification of $\Gr(r, [R_n']_d)$ with $\Gr(r, \kk^{P(d)})$ for all $n\ge d$. In \cref{thm: narrow}, we established existence of a nonempty open subset $G$ of the Grassmannian $\Gr(r, [R_n']_d)$ with certain properties, and we made the point that this set is independent of $n$ when $n\ge d$, in the sense that the defining equations (in Pl\"ucker coordinates) of its complement have coefficients that do not depend on $n$. Via the identifications discussed above, we may think of this set as a nonempty open subset $G$ of  $\Gr(r, \kk^{P(d)})$ whenever $n\ge d$.  While this set $G$ is independent of $n$, the set $O$ and the maps $\alpha$, $\Phi$, $\Psi$  defined in  \eqref{alpha map new}, \eqref{Grassmannian}, and \eqref{psi map} depend on $n$, and in this section we denote them $O_n$, $\alpha_n$, $\Phi_n$ and $\Psi_n$ to indicate this dependence.  

Recall that $\mathcal{SI}_{r,d}(R_n)$ denotes the set of symmetric $(r,d)$ ideals in the ring $R_n$. To simplify notation, since $r$ and $d$ are fixed, we write $\mathcal{SI}(R_n)$ instead of $\mathcal{SI}_{r,d}(R_n)$.  We further let $\mathcal{SI}^g(R_n)$ denote the set of general $(r,d)$-symmetric ideals in $R_n$. 

Let $n,m$ be integers with $n\le m$. We define canonical maps 
\begin{align*}
\pi_{n,m}&\colon \Gr(r, [R_n]_d)\to \Gr(r, [R_m]_d) \qquad && \pi_{n,m}(V)\coloneq V \\
\eta_{n,m}&\colon \mathcal{SI}(R_n)\to \mathcal{SI}(R_m) && \eta_{n,m}(I)\coloneq I_{\sym_m}=\{\sigma\cdot f\colon \sigma\in \sym_m, f\in I\} 
\end{align*}

In \cref{t: CD} we established a commutative diagram, in which the maps involved are restrictions of the maps $\alpha$, $\Psi$, $\Phi$. In \cref{thm: CD-n} below, which compares the commutative diagrams given in \eqref{eq: CDnew} for different values of $n$, the maps involved are also restrictions, but we decided to omit the restriction symbols from the diagram (i.e. we write $\alpha_n$ instead of $\alpha_n|_{O_n}$), in order to avoid notation overload. 
\end{notation}

In the following theorem we prove that our family of general $(r,d)$-symmetric ideals is asymptotically constant upon increasing the number of variables in a sense made precise by the bijection $\eta_{n,m}$ below.

\begin{thm}
\label{thm: CD-n}
Suppose $\kk$ is a field  with ${\rm char}(\kk)=0$ and let $d,r$ be positive integers with $r\le P(d)$. 
If $m$ is sufficiently large to allow for \cref{c:f}, then for all $n\ge m$ the following hold: 
\begin{enumerate}
    \item The map $\eta_{m,n}$ restricts to a bijection $\eta_{m,n}\colon \mathcal{SI}^g(R_m)\to \mathcal{SI}^g(R_n)$. 
    \item $\pi_{m,n}(O_m)\subseteq O_n$. 
    \item There is a commutative diagram, in which all arrows in the rightmost triangle are bijections. 
 \begin{equation}\label{eq: CD2}
\begin{tikzcd}[column sep=5mm, row sep=5mm] 
  O_n \arrow{rrrrrrrr}{\Phi_n}\arrow[ddddrrrr, "\alpha_n"']  & & & & &  & & &\mathcal{SI}^g(R_n)  \\
  &   & &  O_m\arrow[dddr,"\alpha_m"] \arrow[]{rr}{\Phi_m}\arrow[ulll,"\pi_{m,n}"'] &  &  \mathcal{SI}^g(R_m)\arrow[urrr,"\eta_{m,n}"] &  & & \\
  \\
  \\
    && & & G \arrow[uuur,"\Psi_m"]\arrow[uuuurrrr,"\Psi_n"'] &&& &
\end{tikzcd}
\end{equation}   
    \end{enumerate}
    In particular, if $I$ is a general $(r,d)$-symmetric ideal of $R_m$, then $I_{\sym_n}$ is a general $(r,d)$-symmetric ideal of $R_n$ for all $n\ge m$.
\end{thm}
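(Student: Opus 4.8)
The plan is to prove the commutative diagram \eqref{eq: CD2} directly, after which parts (1) and (2) fall out by diagram‑chasing using \cref{t: CD} and its diagram \eqref{eq: CDnew}. Note first that the hypothesis $\ch(\kk)=0$ is exactly what lets \cref{thm: narrow}, \cref{thm: gens} and \cref{t: CD} be applied for \emph{every} $n\ge m$ at once (each of them requires $\ch(\kk)>n$ or $\ch(\kk)=0$, and here $n$ is unbounded), and that \cref{c:f} can be carried out in $R_n$ for all $n\ge m$, since the number of variables it needs depends only on $d$. I will use the $n$-independence of $G$ from \cref{thm: narrow}: there is a fixed nonempty open $G\subseteq\Gr(r,\kk^{P(d)})$ which, under the identification of $\Gr(r,[R'_n]_d)$ with $\Gr(r,\kk^{P(d)})$ via $\{M_\lambda\}_{\lambda\vdash d}$, agrees with the open set supplied by \cref{thm: narrow} for every $n\ge m$; by \cref{t: CD}, for each such $n$ the map $\Psi_n\colon G\to\mathcal{SI}^g(R_n)$ is a bijection and $O_n=\Phi_n^{-1}(\Psi_n(G))$.

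The crux — and the step needing the most care — is the identity $\eta_{m,n}\circ\Psi_m=\Psi_n$ on $G$. To establish it, fix $U\in G$ and write $U=\Span_\kk\bigl(\sum_{\lambda\vdash d}\bsa^i_\lambda M_\lambda:1\le i\le r\bigr)$ with $\bsa^i\in\kk^{P(d)}$; because $G$ does not depend on $n$, the \emph{same} coordinate data $\bsa^1,\dots,\bsa^r$ presents $U$ inside $[R'_m]_d$ and inside $[R'_n]_d$. Run \cref{c:f} to obtain $f_{\bsa^1},\dots,f_{\bsa^r}$, choosing all the auxiliary admissible binomials with their (bounded) supports among $x_1,\dots,x_m$, which is possible since $m$ is large; set $V=\Span_\kk(f_{\bsa^1},\dots,f_{\bsa^r})$, a subspace of $[R_m]_d\subseteq[R_n]_d$. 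Then \cref{thm: gens}(3) in $R_m$ gives $\Psi_m(U)=(V)_{\sym_m}$, while \cref{thm: gens}(3) applied verbatim in $R_n$ — same $U$, same $V$ — gives $\Psi_n(U)=(V)_{\sym_n}$, and $\eta_{m,n}((V)_{\sym_m})=(V)_{\sym_n}$ by definition. The one other non-formal commutativity is $\alpha_n\circ\pi_{m,n}=\alpha_m$ on $O_m$: for $V\in O_m$ spanned by $f_1,\dots,f_r\in[R_m]_d$, the invariants $\rho_m(f_j)$ and $\rho_n(f_j)$ have identical coordinates $\bigl(\sum_{\mu\text{ of type }\lambda}(\text{coefficient of }\mu\text{ in }f_j)\bigr)_{\lambda\vdash d}$ in the bases $\{M_\lambda^{(m)}\}$ and $\{M_\lambda^{(n)}\}$, so $\alpha_m(V)$ and $\alpha_n(V)$ are the same point of $\Gr(r,\kk^{P(d)})$; in particular $\alpha_n(\pi_{m,n}(V))\in G$. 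The remaining faces of \eqref{eq: CD2} commute trivially ($\eta_{m,n}\circ\Phi_m=\Phi_n\circ\pi_{m,n}$ is just the definition of $\eta_{m,n}$) or by \cref{t: CD} ($\Phi_m=\Psi_m\circ\alpha_m$ on $O_m$ and $\Phi_n=\Psi_n\circ\alpha_n$ on $O_n$).

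Granting \eqref{eq: CD2}, part (2) follows: for $V\in O_m$, \cref{t: CD} (for $m$) writes $\Phi_m(V)=\Psi_m(U)$ with $U=\alpha_m(V)\in G$, whence $\Phi_n(\pi_{m,n}(V))=\eta_{m,n}(\Phi_m(V))=\eta_{m,n}(\Psi_m(U))=\Psi_n(U)\in\Psi_n(G)$, and $O_n=\Phi_n^{-1}(\Psi_n(G))$ by \cref{t: CD} (for $n$) then gives $\pi_{m,n}(V)\in O_n$. For part (1): $\Psi_m\colon G\to\mathcal{SI}^g(R_m)$ and $\Psi_n\colon G\to\mathcal{SI}^g(R_n)$ are bijections by \cref{t: CD}, so $\eta_{m,n}\circ\Psi_m=\Psi_n$ forces $\eta_{m,n}|_{\mathcal{SI}^g(R_m)}=\Psi_n\circ\Psi_m^{-1}$, a bijection onto $\mathcal{SI}^g(R_n)$ with inverse $\Psi_m\circ\Psi_n^{-1}$; in particular all three maps of the rightmost triangle of \eqref{eq: CD2} are bijections. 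Finally, the concluding assertion is the special case $I\in\mathcal{SI}^g(R_m)$ of part (1), since $\eta_{m,n}(I)=I_{\sym_n}$.

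I expect the main obstacle to be essentially bookkeeping: making the $n$-independence of $G$ and of the lift $V$ literal, so that ``the same $U\in G$'' and ``the same $V$'' genuinely transport between $R_m$ and $R_n$. Once \cref{c:f} is executed with its variables confined to $x_1,\dots,x_m$, this is routine, and everything downstream is diagram-chasing.
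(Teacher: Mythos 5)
Your proposal is correct and follows essentially the same route as the paper: the key step in both is to use \cref{thm: gens} together with the $n$-independence of the polynomials $f_{\bsa^i}$ from \cref{c:f} to get $\eta_{m,n}\circ\Psi_m=\Psi_n$ on $G$, and then to deduce (1), (2) and the remaining faces of \eqref{eq: CD2} by diagram-chasing with \cref{t: CD} (in particular $O_n=\Phi_n^{-1}(\Psi_n(G))$). Your explicit coordinate check that $\alpha_n\circ\pi_{m,n}=\alpha_m$ is a welcome elaboration of a step the paper dismisses as ``easily checked from the definitions.''
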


\begin{proof}
Let $U\in G$ with  
\[
U=\Span_\kk\left ( \sum_{\l\vdash d}\bsa^1_\l M_\l, \ldots,  \sum_{\l\vdash d}\bsa^r_\l M_\l \right ).
\]
By \cref{thm: gens} we have 
\begin{align*}
\Psi_n(U)&=(f_{\bsa^1}, \dots, f_{\bsa^r})_{\sym_n}\qquad \text{and}\\
\Psi_m(U)&=(f_{\bsa^1}, \dots, f_{\bsa^r})_{\sym_m}
\end{align*}
where the polynomials $f_{\bsa^i}$ are defined in \cref{c:f} and, when $n$ is large, do not depend on $n$. 
We conclude that
\begin{equation}
\label{e: Psi}
\Psi_n(U)=\eta_{m,n}\Psi_m(U) \qquad\text{for all $U\in G$}
\end{equation}
Since $\Psi_m(G)=\mathcal{SI}^g(R_m)$ and $\Psi_n(G)=\mathcal{SI}^g(R_n)$ by \cref{t: CD}, we conclude that $\eta_{m,n}(\mathcal{SI}^g(R_m))=\mathcal{SI}^g(R_n)$. 
The equation \eqref{e: Psi} establishes thus the commutativity of the right triangle in \eqref{eq: CD2}.  Furthermore, since the maps $\Psi_m\colon G\to \mathcal {SI}^g(R_m)$ and $\Psi_n\colon G\to \mathcal {SI}^g(R_n)$ are bijective by \cref{t: CD}, we conclude that the map $\eta_{m,n}\colon \mathcal{SI}^g(R_m)\to \mathcal{SI}^g(R_n)$ is bijective as well.

 It can be easily checked from the definitions that 
\begin{equation}
\label{eq: commute}
\eta_{m,n}\Phi_m(V)=\Phi_n\pi_{m,n}(V)\qquad\text{and}\qquad 
\alpha_n\pi_{m,n}(V)=\alpha_m(V)
\end{equation}
for all $V\in \Gr(r, [R_m]_d)$. 
We want to now show $\pi_{m,n}(O_m)\subseteq O_n$. Recall that by \cref{t: CD} we have  $O_m=\Phi_m^{-1}(\Psi_m(G))$ and $O_n=\Phi_n^{-1}(\Psi_n(G))$.
Let $V\in O_m$, hence $\Phi_m(V)\in \Psi_m(G)$. We also have
\[
\Phi_n(\pi_{m,n}(V))=\eta_{m,n}\Phi_m(V)\in \eta_{m,n}(\Psi_m(G))=\Psi_n(G)
\]
and this shows $\pi_{m,n}(V)\in \Phi_n^{-1}(\Psi_n(G))=O_n$. We therefore proved $\pi_{m,n}(O_m)\subseteq O_n$. 

The commutativity of the left triangle and of the trapezoid on top then follows from \eqref{eq: commute}. The commutativity of the middle triangle and the outer triangle comes from \cref{t: CD}. 
\end{proof}

Let $\{I_n\}_{n\geq 0}$ be a $\sym_{\infty}$-invariant chain of homogeneous ideals with colimit $\I$. Its {\em equivariant Hilbert series} is defined in \cite{NagelRomer} as a bivariate generating series encoding all the Hilbert series of the quotient rings $R_n/I_n$ simultaneously
\begin{equation}\label{eq: equiv HS}
H_{\I}(s,t)=\sum_{n\geq 0}H_{R_n/I_n}(t)s^n=\sum_{n\geq 0} \sum_{u\geq 0} \dim_\kk [R_n/I_n]_u t^us^n.
\end{equation}

We summarize the key features that the numerical invariants of $\sym_{\infty}$-invariant chains $\I$ are known to possess:
    \begin{enumerate}[leftmargin=2em]
        \item The equivariant Hilbert series $H_{\I}(s,t)$ is a rational function of the form
        \[
        H_{\I}(s,t)=\frac{g(s,t)}{(1-t)^a\prod_{i=1}^b\left ((1-t)^{a_i} -sf_i(t)\right)}
        \]
        for some integers $a,b, a_i$ and some polynomials $g\in\Z[s,t]$, $f_i\in\Z[t]$; see \cite[Proposition 7.2]{NagelRomer}.
        \item The Krull dimension and the height of $I_n$ are eventually linear functions of $n$: by \cite[Teorem 5.14]{Nagel3} and \cite[Theorem 7.10]{Nagel1} there exist $A,B, A',B'\in\Z$ such that
        \[ \dim(R_n/I_n)= An+B \text{ and } {\rm ht}(I_n)= A'n+B' \text{ for all } n\gg0.   \]
\item The multiplicity grows eventually exponentially by \cite{Nagel}. More precisely, there are $M,L\in\Z$  and $Q\in\Q$ such that 
\[
\lim_{n\to\infty}\frac{e(R_n/I_n)}{M^nn^L}=Q.
\]
\item The regularity of $I_n$ is bounded by a linear function of $n$: by \cite[Corollaries 4.6, 4.7]{Nagel2} there exist $C,D\in\Z$ such that
        \begin{equation}\label{eq: reg}
        {\rm reg}(I_n)\leq  Cn+D \text{ for all } n\gg0.   
        \end{equation}
\item For any $i \geq 0$, the $i$-th column of the Betti table of $I_n$ has a stable shape whenever $n\gg0$. More precisely, there exist integers $j_0 <\cdots < j_t$ depending on $i$ and $\I$ such that for $n\gg0$
\[
\beta_{i,j}(I_n)\neq 0 \text{ if and only if } j \in \{j_0, \ldots, j_t\}.
\]
For more precise statements regarding stabilization of Betti numbers see \cite{NagelRomerFI}.
    \end{enumerate}

It is conjectured in  \cite[Conjecture 1.1]{Nagel2} that equality holds in \eqref{eq: reg} for appropriate values of $C,D$ and that a similar property is true for projective dimension \cite[Conjecture 1.1]{Nagel1}, namely that there exist $E,F\in\Z$ so that
 \begin{equation}\label{eq: pd}
        {\rm pd}(I_n)\leq  En+F \text{ for all } n\gg0.   
        \end{equation}
Both conjectures are known to hold true when the quotients $R_n/I_n$ are artinian and also for $\sym_{\infty}$-invariant chains of monomial ideals \cite{Satoshi1}, \cite{Raicu1}.

We may also define the $(i,j)$-{\it equivariant betti series of $\I$} by 
\begin{equation}
\label{equiv-beta}
\beta_{i,j}(\I,s)=\sum_{n\geq 0}\dim_\kk \Tor_i^{R_n}(R_n/I_n,\kk)_j s^n
\end{equation}
for all $i,j$, and the {\it equivariant bigraded Poincar\'e series} of $\I$ by
\begin{equation}
\label{equiv-Poincare}
P_\I(s,t,u)=\sum_{i\ge 0, j\ge 0}\beta_{i,j}(\I,s)t^iu^j\,.
\end{equation}
While it seems that this question has not been explicitly stated so far, it is natural to ask whether $\beta_{i,j}(\I,s)$ is a rational function of $s$, and, furthermore, if $P_\I(s,t,u)$ is a rational function in $s,t,u$.

In the following result we describe the features of $\sym_{\infty}$-invariant chains of general $(r,d)$-symmetric ideals. A surprising outcome is that for these chains the algebraic invariants exhibit slower growth than for arbitrary $\sym_\infty$--invariant chains. For example, while the multiplicity of arbitrary $\sym_{\infty}$-invariant chains grows exponentially, for general chains it grows polynomially. 

\begin{thm}\label{cor: sym-infty chain} Suppose $\kk$ is  a field  with ${\rm char}(\kk)=0$  and let $d,r$ be positive integers with $r\le P(d)$. 
Assume $m$ is sufficiently large to allow for \cref{c:f}. Let $I$ be a general $(r,d)$-symmetric ideal of $R_m$, and consider the $\sym_{\infty}$-invariant chain $\{I_n\}_{n\in \N}$ with $I_n=(I)_{\sym_n}$ and colimit $\mathcal I$. Using the notation 
\begin{eqnarray*}
\ell &=& \max\{P(d)-P(d-1)-r,0\} \\
a &=&P(d)-r,
\end{eqnarray*}
the following hold: 
\begin{enumerate}
 \item $\dim(R_n/I_n)=0$  and ${\rm ht}(I_n)=n$ for all $n\ge m$, i.e, the rings $R_n/I_n$ are artinian. 
 \item $\pd_{R_n}(R_n/I_n)=n$ for all $n\ge m$
 \item $\reg_{R_n}(R_n/I_n)=d$ for all $n\ge m$
 \item the $i$-th betti numbers of $R_n/I_n$ are polynomials in $n$ of degree $d+i$ for all $0\leq i\leq n-1$, while the $n$-th betti number of $R_n/I_n$ is a linear function of $n$.
 \item The equivariant  Hilbert series in its reduced form is 
\[
H_\I(s,t)=\frac{s\left[\sum_{j=0}^{d-1}(1-s)^jt^{d-1-j}\right ]+a(1-s)^{d-1}t^d}{(1-s)^d}.
\]
 \item  The  multiplicity of $R_n/I_n$ grows polynomially as a function of $n$ with
 \[
 \lim_{n\to \infty}\frac{e(R_n/I_n)}{n^{d-1}}=\frac{1}{(d-1)!}.
 \]
 \item The equivariant betti number $\beta_{i,j}(\I,s)$ is a rational function of $s$ for all $i,j$, and the equivariant bigraded Poincar\'e series $P_\I(s,t,u)$ is a rational function of $s,t,u$.  More precisely,
 \begin{equation*}
P_\I(s,t,u)\!=\!\frac{1}{1-s}\!+\!stu^d\left(\frac{1}{(1\!-\!s)(1\!-\!s\!-\!stu)^d}\!-\!\frac{a}{(1\!-\!s\!-\!stu)(1\!-\!stu)}\!+\!\frac{\ell+ au +\ell su}{1-stu}\right).
\end{equation*}
\end{enumerate}
\end{thm}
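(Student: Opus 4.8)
The plan is to deduce every assertion from \cref{thm: main}, applied uniformly to all the rings in the chain. The entry point is \cref{thm: CD-n}: because $\operatorname{char}(\kk)=0$ and $m$ is large, for every $n\ge m$ the ideal $I_n=(I)_{\sym_n}$ is again a general $(r,d)$-symmetric ideal of $R_n$, so \cref{thm: main} determines $R_n/I_n$ completely --- its Hilbert function, artinianness, Betti table, and the WLP. I will assume $r<P(d)$, so $a=P(d)-r>0$; when $r=P(d)$ one has $I_n=\m^d$ and every claim is classical or follows verbatim (with the single change $\reg_{R_n}(R_n/I_n)=d-1$).

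Items (1)--(4) are then immediate readings. For (1), the Hilbert function of $R_n/I_n$ vanishes in degrees $>d$, so $R_n/I_n$ is artinian, whence $\dim(R_n/I_n)=0$ and $\operatorname{ht}(I_n)=n$. For (2), $\depth(R_n/I_n)=0$, so Auslander--Buchsbaum gives $\pd_{R_n}(R_n/I_n)=n$. For (3), $\reg_{R_n}(R_n/I_n)$ is the largest $j$ for which row $j$ of the Betti table in \cref{thm: main}(2) is nonzero; since $a>0$ this is $j=d$. For (4), the entries $u_i,b,\ell,a$ of that Betti table are explicit, and for each fixed $i$ they are polynomials in $n$ of the asserted degrees, the top one being of strictly smaller degree.

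For (5), substitute the Hilbert function from \cref{thm: main}(1) into $H_\I(s,t)=\sum_{n\ge0}H_{R_n/I_n}(t)s^n$ and sum, using $\sum_{n\ge0}\binom{n+u-1}{u}s^n=\tfrac{s}{(1-s)^{u+1}}$ for $u\ge1$ and $\tfrac1{1-s}$ for $u=0$; the finitely many terms with $n<m$ (where $I_n=\I\cap R_n$ may be smaller) change the answer by a polynomial only, so the reduced rational form is the displayed one. For (6), since $R_n/I_n$ is artinian its multiplicity equals its length, which by the hockey-stick identity is $\sum_{u=0}^{d-1}\binom{n+u-1}{u}+a=\binom{n+d-1}{d-1}+a$; dividing by $n^{d-1}$ and letting $n\to\infty$ gives $\tfrac1{(d-1)!}$.

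The real work is (7). For fixed $(i,j)$, \cref{thm: main}(2) describes $\beta_{i,j}(R_n/I_n)$ as a function of $n$: it is supported on $j\in\{i+d-1,i+d\}$; for $n\ge i+1$ one has $\beta_{i,i+d-1}(R_n/I_n)=u_i(n):=\binom{n+d-1}{d+i-1}\binom{d+i-2}{d-1}-a\binom{n}{i-1}$, a polynomial in $n$; and $\beta_{i,i+d}(R_n/I_n)$ equals $a$ at $n=i$, $\ell$ at $n=i+1$, and $0$ otherwise. Hence every $\beta_{i,j}(\I,s)$ is a rational function of $s$. The subtle point --- and the main obstacle --- is that $u_i(n)$ does \emph{not} give the correct Betti numbers at $n=i$ and $n=i-1$: the true value at $n=i$ is $b(i)=\binom{i+d-2}{d-1}-ai+\ell=u_i(i)+\ell$, while $u_i(i-1)=-a\ne0$ even though $\beta_i(R_n/I_n)=0$ for $n<i$. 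These two discrepancies contribute correction monomials $\ell s^i$ and $as^{i-1}$ to $\beta_{i,i+d-1}(\I,s)$, and they are precisely what makes the closed form come out. To assemble $P_\I(s,t,u)=\sum_{i,j}\beta_{i,j}(\I,s)t^iu^j$, split the Betti table into three strands: the corner $\beta_{0,0}=1$ (for all $n\ge0$) gives $\tfrac1{1-s}$; for the row $d-1$ strand, summing the $\binom{n+d-1}{d+i-1}$ part over $n$ via $\sum_{p\ge0}\binom{i+d-1+p}{p}s^p=(1-s)^{-(i+d)}$ and then over $i$ via $\sum_{i\ge1}\binom{d+i-2}{d-1}x^i=\tfrac{x}{(1-x)^d}$ at $x=\tfrac{stu}{1-s}$ produces $\tfrac{stu^d}{(1-s)(1-s-stu)^d}$, while the $-a\binom{n}{i-1}$ part (using $\sum_{n\ge0}\binom{n}{k}s^n=\tfrac{s^k}{(1-s)^{k+1}}$) together with the correction $as^{i-1}$ produces $-\tfrac{astu^d}{(1-s-stu)(1-stu)}$ and the correction $\ell s^i$ produces $\tfrac{\ell stu^d}{1-stu}$; for the row $d$ strand, summing $as^i+\ell s^{i+1}$ gives $\tfrac{(a+\ell s)stu^{d+1}}{1-stu}$. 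Adding these and discarding the polynomial contributed by the small-$n$ exceptions yields exactly the stated formula. As a consistency check, the identity $H_{R_n/I_n}(t)=(1-t)^{-n}P_{R_n/I_n}(-1,t)$ summed over $n$ gives $H_\I(s,t)=P_\I\!\big(\tfrac{s}{1-t},-1,t\big)$, which one verifies agrees with part (5).
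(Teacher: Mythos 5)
Your proposal is correct and follows essentially the same route as the paper: reduce via \cref{thm: CD-n} to the statement that every $I_n$ is a general $(r,d)$-symmetric ideal, read (1)--(4) off \cref{thm: main}, sum the Hilbert functions for (5)--(6), and for (7) handle exactly the boundary discrepancies you identify (the extra $\ell$ at $n=i$ and the spurious $-a$ at $n=i-1$) before assembling the generating functions strand by strand. The closed forms you obtain for $\beta_{i,i+d-1}(\I,s)$ and $\beta_{i,i+d}(\I,s)$ and the final assembly coincide with the paper's computation, and your consistency check against part (5) is a worthwhile addition not present there.
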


\begin{proof}
By \cref{thm: CD-n} it follows that for every $n\geq m$ the ideal $I_n=(I_m)_{\sym_n}$ is a general symmetric ideal. 
Statements (1)--(4) then follow from the formulas in \Cref{thm: main}(2).

 Based on \Cref{thm: main}(1), the equivariant  Hilbert series is computed as follows
 \begin{eqnarray*}
 H_\I(s,t) &=&\sum_{n\geq 1}\left[\sum_{i=0}^{d-1} \binom{n-1+i}{i}t^i +at^d\right]s^n \\
  &=&\sum_{i=0}^{d-1}\left[ \sum_{n\geq 1} \binom{n-1+i}{i} s^n \right] t^i + at^d\frac{1}{1-s}=\sum_{i=0}^{d-1}\frac{s}{(1-s)^{i+1}}  t^i +a\frac{t^d}{1-s}\\
  &=&\frac{s}{(1-s)}\frac{1-\frac{t^d}{(1-s)^d}}{1-\frac{t}{(1-s)}}+a\frac{t^d}{1-s}=\frac{s[(1-s)^d-t^d]}{(1-s-t)(1-s)^d}+a\frac{t^d}{1-s}\\
  &=&\frac{s\left[\sum_{j=0}^{d-1}(1-s)^jt^{d-1-j}\right ]+a(1-s)^{d-1}t^d}{(1-s)^d}.
 \end{eqnarray*}

 Using \Cref{thm: main}(1),  the  multiplicity of $R_n/I_n$ is given by
 \[
 e\left(\frac{R_n}{I_n}\right)=\binom{n+d-1}{d-1}+P(d)-r,
 \]
whence the limit claimed in (6) follows.

(7) Note that the formulas for Betti numbers of \cref{thm: main} apply both for the case $r\leq P(d)$ and $r\geq P(d)$ if the definitions of $a$ and $\ell$ are generalized as stated in the claim.

If $i-j=d$, then $\Tor_i^{R_n}(R_n/I_n,\kk)_j\ne 0$ only when $i=n-1$ and when $i=n$. Using \cref{thm: main}(2), we have:
\begin{align*}
\beta_{i,i+d}(\I,s)&=\dim_\kk \Tor_i^{R_{i}}(R_i/I_{i},\kk)_{i+d} s^{i}+\dim_\kk \Tor_i^{R_{i+1}}(R_{i+1}/I_{i+1},\kk)_{i+d} s^{i+1}\\
&=as^i+\ell s^{i+1}\,.
\end{align*}

If $i-j=d-1$, then $\Tor_i^{R_n}(R_n/I_n,\kk)_j\ne 0$ only when $i\le n$. Using the formulas in \cref{thm: main}(2) and the notation $b_i=\binom{i+d-2}{d-1}-ai+\ell$, we obtain for all $i\ge 1$: 
\begin{align*}
&\beta_{i,i+d-1}(\I,s)
=\sum_{n>i}\dim_\kk\Tor_i^{R_n}(R_n/I_n,\kk)_{i+d-1}s^n+\dim_\kk\Tor_i^{R_i}(R_i/I_i,\kk)_{i+d-1}s^i\\
&=\sum_{n> i}\left( 
\binom{n+d-1}{d+i-1}\binom{d+i-2}{i-1}-a\binom{n}{i-1} \right)s^n+b_is^i\\
&=\binom{d+i-2}{i-1}\left(\sum_{n>i}\binom{n+d-1}{d+i-1}s^n\right)-a\sum_{n>i}\binom{n}{i-1}s^n+b_is^i\\
&=\binom{d+i-2}{i-1}\cdot s^i\left(\sum_{m\ge 1}\binom{m+i+d-1}{d+i-1}s^{m}\right)-as^{i-1}\left(\sum_{m\ge 2}\binom{m+i-1}{i-1}s^m\right)+b_is^i\\
&=\binom{d+i-2}{i-1}\!\left(\frac{s^i}{(1\!-\!s)^{i+d}}-s^i\right)\!-\!a\!\left(\frac{s^{i-1}}{(1\!-\!s)^{i}}-\!s^{i-1}\!-is^i\!\right)+\!\binom{i+d-2}{d-1}s^i-ais^i+\ell s^i\\
   &=\binom{d+i-2}{i-1}\frac{s^i}{(1-s)^{i+d}}-\frac{a}{s}\left(\frac{s^{i}}{(1-s)^{i}}-s^{i}\right)+\ell s^i\,.  
\end{align*}

Also, observe that 
\[
\beta_{0,0}(\I,s)=\sum_{n\ge 0}s^n=\frac{1}{1-s}\,.
\]
We have thus 
\begin{align*}
&P_I(s,t,u)=\frac{1}{1-s}+\sum_{i\ge 1}\beta_{i,{i+d-1}}(\I,s)t^iu^{i+d-1}+\sum_{i\ge 1}\beta_{i,i+d}(\I,s)t^iu^{i+d}\\
&=\frac{1}{1-s}\!+u^{d-1}\!\left(\sum_{i\ge 1}\binom{d+i-2}{d-1}\frac{(stu)^i}{(1-s)^{i+d}}-\frac{a}{s}\sum_{i\ge 1}\left(\frac{(stu)^i}{(1-s)^{i}}-\!(stu)^i\right)+\ell\sum_{i\ge 1}(stu)^i\!\right)\\
&\qquad+u^d\left(\sum_{i\ge 1} a(stu)^i+\ell s (stu)^i\right).
\end{align*}
We compute some of the terms in this sum separately: 
\begin{align*}
\sum_{i\ge 1}\binom{d+i-2}{d-1}\frac{(stu)^i}{(1-s)^{i+d}}&=\frac{stu}{(1-s)^{d+1}}\sum_{m\ge 0}\binom{d-1+m}{d-1}\left(\frac{stu}{1-s}\right)^m\\
&=\frac{stu}{(1-s)^{d+1}}\frac{1}{\left(1-\frac{stu}{1-s}\right)^d}=\frac{stu}{(1-s)(1-s-stu)^d}  
\end{align*}
\begin{align*}
\sum_{i\ge 1}\left(\frac{(stu)^i}{(1-s)^{i}}-(stu)^i\right)&=\left(\frac{1}{1-\frac{stu}{1-s}}-\frac{1}{1-stu}\right)=\frac{1-s}{1-s-stu}-\frac{1}{1-stu}\\
&=\frac{s^2tu}{(1-s-stu)(1-stu)}\,.
\end{align*}

In conclusion, the bigraded equivariant Poincar\'e series is
\begin{equation*}
P_\I(s,t,u)\!=\!\frac{1}{1-s}\!+\!stu^d\left(\frac{1}{(1\!-\!s)(1\!-\!s\!-\!stu)^d}\!-\!\frac{a}{(1\!-\!s\!-\!stu)(1\!-\!stu)}\!+\!\frac{\ell+ au +\ell su}{1-stu}\right).
\end{equation*}
\end{proof}

\paragraph{ \bf Acknowledgement} This material is based upon work supported by the National Science Foundation under Grant No. DMS-1928930 and by the Alfred P. Sloan Foundation under grant G-2021-16778, while the authors were in residence at the Simons Laufer Mathematical Sciences Institute (formerly MSRI) in Berkeley, California, during the Spring 2024 semester and separately by the Fields Institute  the authors visited during the Spring 2025 thematic program in Commutative Algebra and its Applications. The first author is partially supported by NSF DMS--2401482.

\bigskip
\bibliographystyle{abbrv}
\bibliography{references}
\end{document}